\documentclass[11pt]{amsart}
\hoffset=-.5in
\usepackage{amsmath,amssymb,amsthm,amsfonts,tikz,mathtools, multicol,fleqn,hyperref}
\usepackage{comment}
\setlength{\mathindent}{17pt}
%Lynn added the above for the Introduction

\usetikzlibrary{decorations.pathmorphing,cd}
%enumitem
%ulem package does really strange things
\usepackage[numbers]{natbib}
\voffset=-.5in

\setlength{\textheight}{8.5in}
\setlength{\textwidth}{6in}

\newtheorem{thm}{Theorem}[section]
\newtheorem{cor}[thm]{Corollary}
\newtheorem{lem}[thm]{Lemma}
\newtheorem*{Cpct}{Lemma \ref{compactness_emb_degrees}}
\newtheorem*{Duo}{Proposition \ref{compactness_duo}}
\newtheorem*{FmlaFree}{Corollary \ref{cor_fmla_free}}
\newtheorem{prop}[thm]{Proposition}

\theoremstyle{remark}
\newtheorem{rmk}[thm]{Remark}

\newtheorem{ex}[thm]{Example}
\newtheorem{obs}[thm]{Observation}

\theoremstyle{definition}
\newtheorem{dfn}[thm]{Definition}
\newtheorem{qu}[thm]{Question}

\newcommand{\la}{\left <}
\newcommand{\ra}{\right >}

\newcommand{\M}{\mathcal{M}}
\newcommand{\R}{\mathcal{R}}

\newcommand{\Q}{\mathbb{Q}}
\newcommand{\N}{\mathcal{N}}
\newcommand{\Nn}{\mathbb{N}}
\newcommand{\I}{\mathcal{I}}
\newcommand{\Z}{\mathbb{Z}}

\newcommand{\A}{\mathcal{A}}
\newcommand{\B}{\mathcal{B}}
\newcommand{\C}{\mathbf{C}}
\newcommand{\D}{\mathbf{D}}
\newcommand{\K}{\mathcal{K}}

\newcommand{\uphp}{\upharpoonright}
\newcommand{\ov}{\overline}

\newcommand{\raw}{\rightarrow}

\newcommand{\ii}{\ov{\imath}}
\newcommand{\jj}{\ov{\jmath}}

%12-2016
\newcommand{\emtp}{\textrm{EMtp}}
\newcommand{\tp}{\textrm{tp}}
\newcommand{\len}{<_{\textrm{len}}}
\newcommand{\lx}{<_{\textrm{lex}}}
\newcommand{\fr}{Fra\"{i}ss\'{e}}

\newcommand{\tr}{\textrm{stree}}
\newcommand{\str}{\textrm{strtree}}
\newcommand{\eq}{\textrm{eq}}

\newcommand{\age}{\textrm{age}}
\newcommand{\Aut}{\textrm{Aut}}
\newcommand{\smf}{\smallfrown}
\newcommand{\Th}{\textrm{Th}}
\newcommand{\ran}{\textrm{ran}}

\newcommand{\qftp}{\textrm{qftp}}
\newcommand{\Wo}{\omega^{<\omega}}

\newcommand{\snb}{S_n^\B(\emptyset)}
\newcommand{\sna}{S_n^\A(\emptyset)}

\newcommand{\CC}{\mathbf{C}}
\newcommand{\id}{\textrm{id}}

\newcommand{\rqe}{\texttt{rqe}}

%I am worried it looks like the parameters are from A
\newcommand{\LA}{L({\A})}
\newcommand{\LB}{L({\B})}
\newcommand{\Diag}{\textrm{Diag}}
\newcommand{\emb}{\textrm{Emb}}
\newcommand{\obj}{\textrm{Obj}}

\newcommand{\Ba}{\mathcal{B}_{ba}}
\newcommand{\EK}{\mathcal{E}\mathcal{K}}

\newcommand{\CEKS}{\mathcal{C}\mathcal{E}[\mathcal{K}^*]}

\title{A New Perspective on Semi-retractions and the Ramsey Property}
%\date{Apr 14, 2022 draft}
%\author{Dana Barto{\v s}ov\'a and Lynn Scow}
\thanks{The first author was supported by National Science Foundation grants DMS-1953955 and CAREER DMS-2144118.}
\thanks{The second author was supported by National Science Foundation grant number DMS-2246995.}
\thanks{This article is based upon work supported by the National Science Foundation under Grant No.~DMS-1928930 while the second author participated in a program hosted by the Mathematical Sciences Research
Institute in Berkeley, California, during Summer 2022.  The second author completed parts of this paper in collaboration with the first author while supported by an AWM-NSF Travel Grant in 2023.}
\thanks{A portion of this research was carried out as part of the American Institute of Mathematics (AIM) SQuaREs
program. The authors thank AIM for their support.}

\begin{document}

\author{Dana Barto\v{s}ov\'a}
\address{University of Florida, 1400 Stadium Road, Gainesville, FL 32601}
\email{dbartosova@ufl.edu}
\urladdr{https://people.clas.ufl.edu/dbartosova/}

\author{Lynn Scow}
\address{California State University, San Bernardino
	5500 University Parkway 
	San Bernardino, CA 92407} 
\email{lscow@csusb.edu} 
\urladdr{https://www.csusb.edu/profile/lynn.scow}

\begin{abstract}
We investigate the notion of a semi-retraction between two first order structures (in typically different signatures) that was introduced by the second author as a link between the Ramsey property and generalized indiscernible sequences. We further these connections between combinatorics and model theory, and look at semi-retractions through a new lens establishing transfers of the Ramsey property and finite Ramsey degrees under quite general conditions that are optimal as demonstrated by counterexamples. Finally, we compare semi-retractions to the category theoretic notion of a pre-adjunction. 
\end{abstract}

\maketitle

\section{Introduction}

Our current subject is finiteness of Ramsey degrees for embeddings of finitely generated structures and mechanisms that transfer this property between classes.
Given a first-order structure $\A$ in any signature, let $\K:=\age(\A)$ be the class of all finitely generated substructures of $\A$.  Given an element $A \in \K$, we may refer to all substructures of $\A$ isomorphic to $A$ as the \textit{$A$-substructures of $\A$}.  We say that $\K$ has the Ramsey property if it has a certain partition property as stated in Definition \ref{rpDef}: The class $\K$ has the \textit{Ramsey property (RP)} if for all $A, B \in \K$ and integers $k \geq 1$ there exists $C \in \K$ such that for any coloring of the $A$-substructures of $C$, there exists a copy $B'$ of $B$ in $C$ such that the coloring is constant on the $A$-substructures of $B'$.  There has been much recent work in structural Ramsey theory to understand the full landscape of classes of structures with the Ramsey property.  The Ne\v{s}et\v{r}il-R\"{o}dl and Abramson-Harrington theorems gave general classes of structures with the Ramsey property \cite{abha78,rone77}, such as linearly ordered graphs and hypergraphs.  Several years later, this work was extended to first order structures in signatures that are not purely relational \cite{hune19}, which is our present context.

%{\color{red} Define delta-n-indiscernble sequences?}
%%new paragraph
%Ramsey reduces it to checking all finite models of size $m$, which is a decidable procedure.
Ramsey, in his 1929 paper, described the \emph{Entscheidungsproblem} from the 1928 book by Hilbert and Ackermann, \emph{Grundz\"{u}ge der theoretischen Logik} as ``the problem of finding a regular
procedure to determine the truth or falsity of any given logical formula'' \cite{ra29}.  
%or, Principles of Theoretical Logic.
In an effort to address this problem for special cases of formulas, Ramsey proved what is now known as the infinite Ramsey theorem for sets, followed by the finite Ramsey theorem for sets (what we would call the Ramsey property for the class of finite sets, in the language of Definition \ref{rpDef}).  The main theorem in \cite{ra29} for universal sentences $F$ in a relational signature states that there is an integer $m$ depending on certain quantities derived from $F$, such that $F$ is consistent in a structure whose universe has $m$ or more members if and only if some logically equivalent sentence $P$ in disjunctive normal form contains a disjunct of a certain syntactic form.  Let $\Delta$ be a finite set of relation symbols and let $n$ be an integer.  It is the presence of $\Delta$-$n$-indiscernible sequences (see Definition \ref{deltanind}) of a certain size depending on $m$, guaranteed by Ramsey's combinatorial theorems, that guarantees the necessity of the condition in this main theorem.  For further implications of Ramsey's work, the interested reader is invited to consult \cite{sep-ramsey}.
%as a starting point for these investigations.
 Later, in \cite{EM56}, the infinite Ramsey theorem for sets is applied to obtain infinite $\Delta$-$n$-indiscernible sequences locally based on an infinite subset of a structure.  This notion was generalized in \cite{sh78} to $\I$-indexed indiscernible sets, which were the starting point for the investigations of the second author.
%

%Ramsey solves this problem for the case of sentences $\varphi$ in a relational signature using finitely many variables $x_1,\ldots,x_n$ such that the sentence is in a certain normal form; namely, a finite sequence of universal quantifiers precedes all other logical symbols in the sentence.  In order to obtain this result, 

An \textit{$\I$-indexed (generalized) indiscernible set} (see Definition \ref{genind}) is an $\I$-indexed set of same-length finite tuples $\ov{a}_i$ from some structure $\M$, for $i \in \I$, which sequence is homogeneous in a certain way: The structure $\M$ does not make any more distinctions between finite strings $\ov{a}_{\ov{\imath}}$, $\ov{a}_{\ov{\jmath}}$ of tuples than the atomic formulas in $\I$ do between $\ov{\imath}$ and $\ov{\jmath}$.  This tool gives streamlined proofs of the equivalence of certain dividing lines in classification theory in model theory, for example that a theory is unstable if and only if it has the independence property or the strict order property; or that a theory has the tree property if and only if it has the tree property of either the first or second kind \cite{sh78}. A property stated for a specific class of trees in \cite{dzsh04} was referred to as the \textit{modeling property} by the second author in \cite{sc12} and proved to be equivalent to $\age(\I)$ having the Ramsey property, under certain assumptions on $\I$.  In the Introduction to \cite{sc15}, this latter result was termed a ``dictionary'' theorem, 
%(and extended to a wider class of structures $\I$)
in the spirit that the theorem translates phrase \eqref{prop1} (stated in the language of model theory) into phrase \eqref{prop2} (stated in the language of partition theory) under mild conditions:

\begin{equation}\label{prop1}
\textrm{For the structure $\I$, $\I$-indexed indiscernible sets have the modeling property.}
\end{equation}

\begin{equation}\label{prop2}
\textrm{For the structure $\I$, $\age(\I)$ has the Ramsey property.} 
\end{equation}

It is arguable that the entanglement of logic and partition theory from the start (as in \cite{ra29}) renders the use of the term ``dictionary'' moot.  The second author appeals to recent history when the two properties described in \eqref{prop1} and \eqref{prop2} were not as well understood to be equivalent.  Though the Ramsey property was used extensively in \cite{sh78} in different contexts to prove the existence of $\I$-indexed indiscernible sets that model the definable relations on a pre-existing set of parameters (as was done in \cite{EM56}), it was not generally clear that these were equivalent properties, and indeed, that the failure of the Ramsey property would guarantee failure of the modeling property in specific contexts, to the detriment of certain model-theoretic investigations.  The outstanding hope is that manipulations on the model-theory side could yield new results in partition theory. 
 In fact, an argument in \cite{sc15} that leveraged the theory of generalized indiscernible sequences provided a new proof of the result in \cite{le73} that a certain class of trees in a functional signature has the Ramsey property. 
 In \cite{sc15}, the dictionary theorem was extended to include all cases when $\I$ is locally finite and ordered, modulo an additional property which was eliminated in \cite{sc21} as a result of conversations with the first author.  The complete Dictionary Theorem is stated in Theorem \ref{dictionary}.
  
In \cite{sc21}, the notion of \textit{semi-retraction} was introduced by the second author, which is a pair of maps between two structures $\A, \B$ in possibly different signatures (see Definition \ref{semir}).  This notion, together with the Dictionary Theorem, yielded a theorem detailing when a semi-retraction transfers the Ramsey property from $\B$ to $\A$, stated in Corollary \ref{transfer}.  Since the notion of semi-retraction is essentially algebraic, the first author suggested an investigation into a ``formula-free'' proof of the same result (one that does not specifically use the tools of first-order logic). This approach ultimately led to  Theorem \ref{thm_transembd}, Corollary \ref{cor_fmla_free} and Theorem \ref{fmla_free_relational}, which are sharper refinements of Corollary \ref{transfer}.
% which led to further insights into the dynamical consequences of the Ramsey property.  

The paper is organized as follows.  In Section \ref{prelim}, we outline our basic definitions and notational conventions as well as overview prior results.  
%and connections to pre-adjunctions.  
In Section \ref{examples}, we state a characterization of semi-retractions in terms of the Ramsey property under certain conditions, Theorem \ref{iff}, and give several examples and non-examples of semi-retractions.  
%In Section \ref{aut}, we indicate corollary results for the automorphism groups.  
In Section \ref{counterexample}, we show that the countable random graph and countable random $n$-regular hypergraphs for $n\geq 3$ are semi-retracts of the countable atomless Boolean algebra. 
In Section \ref{fmla}, we present our ``formula-free'' argument for how semi-retractions transfer the Ramsey property, Corollary \ref{cor_fmla_free}: If $\A$ is locally finite and the finitely-generated substructures of $\B$ are rigid, then if $\B$ has RP and $\A$ is a semi-retract of $\B$, $\A$ must have RP.   Moreover, Theorem \ref{fmla_free_relational} eliminates the assumption of rigidity in the case of relational structures.  The same technique yields a result on transfer of finite small Ramsey degrees in Corollary \ref{cor_fmla_free_degrees} and Theorem \ref{thm_transembd} (in the latter, rigidity may be omitted again).
In Section \ref{category}, we explore a categorical characterization of semi-retractions as well as the relationship to pre-adjunctions, which were introduced in \cite{mas18}. In section \ref{conclusion}, we discuss the relationship between semi-retractions and more commons constructions in model theory.

\section{Preliminaries}\label{prelim}

We present our basic notation around sequences.  
%Given a tuple $\ov{a} = (a_0,\ldots,a_{n-1})$ and a function $f$, by $f(\ov{a})$ we mean $(f(a_0),\ldots,f(a_{n-1}))$ and $\ran~\ov{a} :=\{a_i \mid i<n\}$.  
A tuple $\ov{a}$ is a finite sequence $(a_0,\ldots,a_{n-1})$ for some natural number $n$, and $|\ov{a}|=n$ is defined to be the length of the tuple.  Given a set $X$ and an integer $s \geq 1$, $X^s$ denotes the set of all $s$-tuples from $X$.
%Given a finite sequence $\ii$, $|\ii|$ denotes the integer length (domain) of the sequence.  
We define $\ran~\ov{a} :=\{a_0,\ldots,a_{n-1}\}$.
All tuples $\ov{a}$ are assumed to be finite unless said otherwise.
For two tuples $\ov{a}_1, \ov{a}_2$, by $\ov{a}_1 \subseteq \ov{a}_2$ we mean that $\ran~\ov{a}_1 \subseteq \ran~\ov{a}_2$ as sets.
For an integer $k \geq 0$, $k:=\{0,1,2,\ldots,k-1\}$.
For an integer $k \geq 1$ a \emph{$k$-coloring} of a set $X$ is any function $c: X \raw k$.  
%If $k$ is replaced by any other set of cardinality $k$, $c$ may still be referred to as a $k$-coloring
%, or even simply a \emph{finite coloring} if the size of $k$ is unimportant. 
%For a finite $n$-tuple $\ov{a}$ I sometimes just write $a$, and a
We denote the image $\{c(x) : x \in X\}$ by $c(X)$. 
%I guess not c''X
Given a function
%$f: X \raw Y$ and $\ii':=(\ii_k : k<s) \in {(X^n)}^s$, we define $f(\ii') = (f(\ii_k) : k<s)$.
%%Did we use this level of generality above?
%10.8
$f: X^n \raw Y$ and $\ii':=(i_k : k<s) \in X^s$, we define $f(\ii') = (f(i_k) : k<s)$.
Let $f:X\raw Y$ be an injective function and let $\psi:A\raw B$ be a function between two subsets of $X$. We will denote by $f(\psi):f(A)\raw f(B)$ the function defined by $f(\psi):=\{(f(a),f(\psi(a))):a\in A\}.$  Moreover, if $\psi$ is injective, $f(\psi)$ is also injective.
%9.29

%%%%%%LOGIC%%%%
%
A signature $L$ is a list of symbols that must be interpreted in any $L$-structure as either relations or functions of the specified arity.  The signature is \emph{relational} if it consists only of relation symbols, and \emph{functional} if it contains at least one function symbol.  We do not assume that signatures are either finite or relational, unless explicitly stated.  The cardinality of a set $S$ is denoted by $|S|$.  Given a structure $\A$, $\LA$ refers to the signature of $\A$, $|\A|$ refers to the underlying set of $\A$, and $||\A||$ to the cardinality of the set $|\A|.$ We denote by $\Th(\A)$ the theory of $\A$ -- the set of all $\LA$-sentences true in $\A$.  
Given $\ov{a}$ from $\A$, $\la \ov{a} \ra_\A$ denotes the substructure generated by $\ov{a}$ in $\A$ (it will be the closure of $\ov{a}$ under the $n$-ary function symbols of $\LA$ for all $n \geq 0$).  Given  a (possibly infinite) tuple $\ov{x}$ in 1-1 correspondence with an enumeration $\ov{a}$ of $\A$, by $\Diag_\A(\ov{x})$ we mean the set of all $R(x_{i_0},\ldots,x_{i_{n-1}})$ for relation symbols $R \in \LA$ such that $\A \vDash R(a_{i_0},\ldots,a_{i_{n-1}})$.
%
%Fix a structure $\A$, and 
For an integer $n \geq 1$, an $n$-ary $L$-formula is a first-order formula with free variables included in the list $x_0,\ldots,x_{n-1}$.
We say that a set $X \subseteq {|\A|}^n$ is \emph{definable} if there exists $n$ and an $n$-ary $\LA$-formula $\varphi(\ov{x})$ such that for all $\ov{a}$ from $\A$, $\A \vDash \varphi(\ov{a})$ if and only if $\ov{a} \in X$, and \emph{$0$-definable} if $\varphi(\ov{x})$ may be chosen without parameters from $\A$.
A set $X \subseteq {|\A|}^n$ is \emph{quantifier-free definable} if it is $0$-definable by way of a quantifier-free formula $\varphi(\ov{x})$.
%10.11
%
A structure $\A$ is \emph{$\omega$-homogeneous} if any partial isomorphism between finite subsets of $\A$ can be extended to an automorphism of $\A$.  For the basics of formulas, structures, and Fra\"iss\'e theory the reader is referred to \cite{ma02,ho93}.

For structures $\A, \B$, $\A \subseteq \B$ always denotes that $\A$ is a substructure of $\B$, in which case they are structures in the same signature.  The age of a structure $\A$, $\age(\A)$ is the set of all finitely-generated substructures of $\A$,  modulo $\LA$-isomorphism.  We will use Roman letters $A, B$ for finitely generated substructures of a given structure $\A$.
For structures $\A, \A'$, $\A \cong \A'$ means that the structures are isomorphic (and thus, they are in the same signature).  To emphasize the shared signature, we might write $\A \cong_L \A'$, where $L=\LA=L(\A')$.
We say that a structure $\A$ is \emph{rigid} if the only automorphism of $\A$ is the identity map.

\begin{rmk}\label{unique} If $\age(\B)$ consists of rigid elements, then for any $C, C' \in \age(\B)$, if $C \cong C'$, then this is witnessed by a unique isomorphism $\tau: C \raw C'$.
\end{rmk}

%\begin{proof} If $\tau_1, \tau_2 : C \raw C'$ are two isomorphisms,  then $\tau_2^{-1}\tau_1 : C \raw C$ is an automorphism of $C$,  but since $\Aut(C)$ is trivial, $\tau_2^{-1}\tau_1 = \id$,  i.e.  $\tau_1 = \tau_2$.  %Thus,  if $C \cong C'$, then there is a unique isomorphism from $C$ to $C'$, $\tau : C \raw C'$.\end{proof}

\noindent Fix a signature $L$, an $L$-structure $\A$, and an integer $n \geq 1$.
\begin{enumerate}  
\item\label{delta} Given a set $\Delta$ of $L$-formulas, 
%closed under variable substitutions, 
a \textit{$\Delta$-$n$-type (over $\emptyset$ in $\A$)} is a set of $n$-ary formulas from $\Delta$ that is consistent with $\Th(\A)$.  A \textit{complete $\Delta$-$n$-type} $\pi$ is a $\Delta$-$n$-type such that for every $n$-ary formula $\varphi$ from $\Delta$, either $\varphi \in \pi$ or $\neg \varphi \in \pi$.  If we drop the use of $n$, we mean $\Delta$-$n$-type for some $n$.
\item In the case that $\Delta$ is the set of all quantifier-free $L$-formulas, we call a $\Delta$-$n$-type in $\A$ a \textit{quantifier-free $n$-type in $\A$}.  In the case that $\Delta$ is the set of all $L$-formulas, we call a $\Delta$-$n$-type in $\A$ an \textit{$n$-type in $\A$}.  Such types are described as ``complete'' if they are complete $\Delta$-$n$-types for the appropriate $\Delta$.  
%\item Complete quantifier-free types and complete types are complete according to the instantiation of $\Delta$ in item \eqref{delta}.
\item An $n$-type $p$ over $\emptyset$ in $\A$ is \textit{realized (in $\A$)} if there exists $\ov{a} \in {|\A|}^n$ such that $\A \vDash \varphi(\ov{a})$ for all $\varphi \in p$.  
\item A structure $\A$ is \textit{$\kappa$-saturated} if for all subsets $S \subseteq |\A|$ such that $|S| < \kappa$, $\A$ realizes all types in $\A$ over $S$. We say that $\A$ is \textit{saturated}, if it is $||\A||$-saturated.
\item We define $S^\M_n(\emptyset)$ to be the space of all complete $n$-types over $\emptyset$ in $\A$ with the usual Stone topology, with basic open sets $[\psi] := \{p \in S^\M_n(\emptyset) \mid \psi \in p\}$.  
\item Given a tuple $\ov{a} \in {|\A|}^n$, $\tp_\Delta^\A(\ov{a})$ is the complete $\Delta$-$n$-type of $\ov{a}$ in $\A$.  For $\tp_\Delta^\A(\ov{a})$, we write $\qftp^\A(\ov{a})$ when $\Delta$ is the set of all quantifier-free formulas, and we write $\tp^\A(\ov{a})$ when $\Delta$ is the set of all formulas.
\item For same-length tuples $\ov{a}, \ov{a}'$ from $\A$, we will use $\ov{a} \sim_\A \ov{a}'$ to mean that $\qftp^\A(\ov{a})=\qftp^\A(\ov{a}')$, $\ov{a} \equiv_\Delta^\A \ov{a}'$ to mean that $\tp_\Delta^\A(\ov{a})=\tp_\Delta^\A(\ov{a}')$, and $\ov{a} \equiv^\A \ov{a}'$ to mean that $\tp^\A(\ov{a})=\tp^\A(\ov{a}')$.
%and with no repetitions.)
\end{enumerate}

\begin{rmk}\label{shelah}
If a structure $\A$ is saturated then $\A$ realizes all types $p \in S^\A_\kappa(\emptyset)$ such that $\kappa \leq ||\A||$ (see Lemma 1.12 in \cite{sh78}).
\end{rmk}

%%%%%%%above is from other note.  combine.
\subsection{Structural Ramsey theory and topological dynamics}
We start this section with some standard definitions from structural Ramsey theory, see \citep[Introduction of part (D)]{kpt05},\cite{nvt13},\cite{ne05}.  Given $L$-structures $A, B$ we define ${B \choose A}$ to be the set of all substructures $A' \subseteq B$ such that $A' \cong A$ and $\emb(A,B)$ to be the set of all $L$-embeddings $f:A \rightarrow B$.  Given an $L$-embedding $h: B \rightarrow C$, we define $h \circ \emb(A,B) = \{h \circ f : f \in \emb(A,B) \} \subseteq \emb(A,C)$.

Following \cite{draganKPT}, we define two types of Erd\H{o}s-Rado partition arrow
%incorporated in introduction
\begin{dfn}
Given a signature $L$, $L$-structures $A, B, \M$, and integers $k, d \geq 1$, the notation 
$$\M \longrightarrow (B)^A_{r,d}$$ 
denotes that for all $r$-colorings $c: {\M \choose A} \rightarrow r$, there exists $B' \subseteq \M$, $B' \cong B$, such that $\left|c({B' \choose A})\right| \leq d$.
		
We say that the structure $B'$ above is \emph{$\leq d$-chromatic (for the coloring $c$ on copies of A)}.

If $d=1$, it will be dropped in the notation, and we will write
$$\M \longrightarrow (B)^A_{r}.$$ 

Moreover,
$$\M \xlongrightarrow{e} (B)^A_{r,d}$$ 
denotes that for all $r$-colorings $c: \emb(A,\M) \rightarrow r$, there exists $h \in \emb(B,\M)$ such that $|c(h \circ \emb(A,B))| \leq d$.

If $d=1$, it will be dropped in the notation, and we will write
$$\M \xlongrightarrow{e} (B)^A_{r}.$$ 
\end{dfn}

\begin{dfn} Let $\K$ be a class of finitely-generated $L$-structures, for some signature $L$, and let $A, B \in \K$.  
We say that $(A, B)$ is a \emph{Ramsey duo for $\K$} if for all integers $r \geq 2$ there exists $C \in \K$ such that 
$$C \longrightarrow (B)^A_{r} .$$
%for any $k$-coloring $c$ of ${C \choose A}$, there is $B'$ $\in {C \choose B}$ such that for any $A', A'' \in {B' \choose A}$, $c(A') = c(A'')$.  

We say that $B'$ is a copy of $B$ \emph{homogeneous for $c$ (on copies of $A$)}.
\end{dfn}

\begin{dfn}\label{rpDef}
We say that $\K$ has \emph{the Ramsey property (RP)} if for all $A, B \in \K$, $(A, B)$ is a Ramsey duo for $\K$.
\end{dfn}

%We also list examples of classes, mostly listed in \cite{kpt05,ne05}, as they fall within this characterization.

\begin{ex}\label{ex_RP}  The following classes have RP:
\begin{enumerate}
	\item All finite sets in $L = \emptyset$ (\cite{ra29}).
	\item All finite linear orders in $L= \{<\}$ (\cite{ra29}).
\item All finite simple graphs with no loops with an ordering on the vertices in $L = \{R,<\}$ (\cite{abha78},\cite{rone77}).
\item All finite $n$-regular hypergraphs with linear orders in $L=\{R,<\}$, where $R$ is $n$-ary (\cite{abha78},\cite{rone77}).
\item Convexly ordered finite equivalence relations in $L= \{E,<\}$ (known, see discussion after Corollary 6.8 in \cite{kpt05}).
\item Finite Boolean algebras in $L=\{\vee,\wedge,\neg,\mathbf{0},\mathbf{1}\}$ (\cite{graro71}).
\end{enumerate}
\end{ex}

\begin{ex}\label{ex_notRP}  The following classes do not have RP:
\begin{enumerate}
\item All finite simple graphs with no loops (\cite{rone75}).
\item Finite equivalence relations with any ordering on points in $L = \{E,<\}$ (Theorem 6.4 in \cite{kpt05}).
\item Partial orders with any linear ordering on points in $L = \{<,\prec\}$ (\cite{sok111}).
\end{enumerate}
\end{ex}

As we can see in examples above, the class of finite graphs does not have the Ramsey property, but its expansion by linear orders does. This phenomenon leads to the notion of a Ramsey degree.

\begin{dfn}
	Let $\K$ be a class of $L$-structures and let $A\in \K$. We say that $A$ has \emph{finite Ramsey degree in $\K$} if there is an integer $d \geq 1$ such that for every $B\in\K$ and every $r\geq 2$, there is $C\in \K$  such that $C \longrightarrow (B)^A_{r,d}$.

 We define $d(A,\mathcal{K})$ to be the least such integer $d$, if it exists, and otherwise define $d(A,\mathcal{K})=\infty$.  If $d(A,\K)$ is finite, it is the \emph{Ramsey degree of $A$ in $\K$}.
  % Minimal such $d$ is (if it exists) the \emph{Ramsey degree} of $A$ in $\K$ and is denoted by $d(A,\mathcal{K})$.
	\end{dfn}

 \begin{obs} A class $\K$ has RP if and only if $d(A,\K)=1$ for all $A \in \K$.
 \end{obs}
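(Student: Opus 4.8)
The plan is to observe that this is essentially a matter of unwinding the two definitions and matching them up, using the fact that the partition arrow with $d=1$ is exactly the $d$-free arrow appearing in the definition of a Ramsey duo. First I would record the trivial but crucial identification: for $L$-structures $A, B, C$ and an integer $r \geq 1$, the statement $C \longrightarrow (B)^A_{r,1}$ literally unwinds to ``for every $r$-coloring $c \colon {C \choose A} \to r$ there is $B' \subseteq C$ with $B' \cong B$ and $|c({B' \choose A})| \leq 1$'', i.e.\ $|c({B' \choose A})| = 1$ (the set is nonempty, assuming ${B \choose A} \neq \emptyset$; if ${B \choose A} = \emptyset$ the condition is vacuous and both sides hold trivially), which is precisely $C \longrightarrow (B)^A_r$ as used in the definition of ``Ramsey duo''. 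I would also note that $d(A,\K) \geq 1$ whenever it is finite, since the defining quantifier ranges over integers $d \geq 1$.

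For the forward direction, suppose $\K$ has RP and fix $A \in \K$. Then for every $B \in \K$ and every $r \geq 2$, $(A,B)$ being a Ramsey duo for $\K$ gives $C \in \K$ with $C \longrightarrow (B)^A_r$, hence $C \longrightarrow (B)^A_{r,1}$ by the identification above. Thus $d = 1$ witnesses that $A$ has finite Ramsey degree in $\K$, so $d(A,\K)$ exists and, being the least such $d \geq 1$, equals $1$.

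For the reverse direction, suppose $d(A,\K) = 1$ for every $A \in \K$, and fix $A, B \in \K$ and an integer $r \geq 2$. Since $d = 1$ realizes the defining property of $d(A,\K)$, there is $C \in \K$ with $C \longrightarrow (B)^A_{r,1}$, hence $C \longrightarrow (B)^A_r$. As $A, B, r$ were arbitrary (with $r \geq 2$), every pair $(A,B)$ from $\K$ is a Ramsey duo for $\K$, so $\K$ has RP. I do not anticipate a genuine obstacle here; the only point requiring a word of care is the degenerate case ${B \choose A} = \emptyset$ in the arrow unwinding, and the observation that the $r$-quantifier ranges ($r \geq 2$) are the same in both definitions, so no case $r = 1$ needs separate attention (a $1$-coloring is constant, so that case is vacuous anyway).
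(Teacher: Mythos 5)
Your proof is correct and is exactly the definitional unwinding the paper intends (the Observation is stated without proof precisely because it is immediate). Note that your careful ``identification'' step is even shorter than you make it: the paper \emph{defines} $C \longrightarrow (B)^A_r$ to be the notation for the $d=1$ case of $C \longrightarrow (B)^A_{r,d}$, so the two arrows coincide by fiat and no case analysis on ${B \choose A} = \emptyset$ is needed.
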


We can make a related definition for an infinite structure $\M$.

\begin{dfn}
Given an $L$-structure $\M$ and a finitely-generated substructure $A \subseteq \M$, we say that $A$ has \emph{finite small Ramsey degree in $\M$} if for some integer $d \geq 1$, for all finitely-generated structures $B \subseteq \M$, for all integers $r \geq 2$, 
$$\M \longrightarrow (B)^A_{r,d}.$$ 

If $A$ has finite small Ramsey degree in $\M$, then we define $d(A,\M)$ to be the least integer $d$ such that for all finitely generated structures $B \subseteq \M$, for all integers $r \geq 2$, $\M \longrightarrow (B)^A_{r,d}$.  

If $A$ does not have finite small Ramsey degree in $\M$, we define $d(A,\M) = \infty$.

We say that $(A,B)$ is a \emph{Ramsey duo for $\M$} if for all integers $r \geq 2$, 
$$\M \rightarrow (B)^A_{r}$$
\end{dfn}

\begin{prop}\label{compactness_duo} Fix a signature $L$ and a locally finite $L$-structure $M$.  Let $\K:=\age(\M)$.  Then, for any finite substructures $A, B \subseteq \M$,
$(A,B)$ is a Ramsey duo for $\K$ if and only if $(A,B)$ is a Ramsey duo for $\M$.
\end{prop}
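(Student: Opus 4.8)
The plan is to prove the two implications separately. The direction ``$(A,B)$ a Ramsey duo for $\K$ $\Rightarrow$ $(A,B)$ a Ramsey duo for $\M$'' is immediate by restricting colorings, while the converse is the standard compactness argument of structural Ramsey theory: hypothetical bad colorings of the finite substructures of $\M$ are assembled into one bad coloring of $\binom{\M}{A}$ using compactness of the product space $r^{\binom{\M}{A}}$. The only step that genuinely uses anything about $\M$ is the finite intersection property in that argument, and there local finiteness is exactly what guarantees that finitely many finite substructures of $\M$ are contained in a common finite substructure; everything else is bookkeeping about copies.

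For ``Ramsey duo for $\K$ $\Rightarrow$ Ramsey duo for $\M$'': I would fix $r \ge 2$ and a coloring $c : \binom{\M}{A} \to r$. By hypothesis there is $C \in \K$ with $C \longrightarrow (B)^A_r$, and since $\K = \age(\M)$ I may replace $C$ by an isomorphic substructure of $\M$ (the arrow $C \longrightarrow (B)^A_r$ transfers across isomorphisms), so assume $C$ is a finite substructure of $\M$. Applying $C \longrightarrow (B)^A_r$ to $c \uphp \binom{C}{A}$ produces $B' \in \binom{C}{B}$ with $\bigl|c(\binom{B'}{A})\bigr| \le 1$; since $B' \subseteq C \subseteq \M$ we have $\binom{B'}{A} \subseteq \binom{\M}{A}$, so this $B'$ witnesses $\M \longrightarrow (B)^A_r$.

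For the converse I would prove the contrapositive. Suppose $(A,B)$ is not a Ramsey duo for $\K$; then there is a fixed $r \ge 2$ such that every finite substructure $C \subseteq \M$ carries a coloring $c_C : \binom{C}{A} \to r$ admitting no $B' \in \binom{C}{B}$ with $\bigl|c_C(\binom{B'}{A})\bigr| \le 1$. Work in the compact space $r^{\binom{\M}{A}}$ ($r$ finite and discrete, Tychonoff), and for each finite substructure $C \subseteq \M$ put
$$Y_C := \left\{\, c \in r^{\binom{\M}{A}} \ :\ \bigl|\,c(\binom{B'}{A})\,\bigr| \ge 2 \ \text{ for every } B' \in \binom{C}{B} \,\right\}.$$
Each $Y_C$ is closed (indeed clopen: $C$ finite makes $\binom{C}{A}$ and $\binom{C}{B}$ finite, so membership in $Y_C$ depends on finitely many coordinates) and nonempty, since any extension of $c_C$ to $\binom{\M}{A}$ lies in $Y_C$. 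The family $\{Y_C\}$ has the finite intersection property: given finite substructures $C_1,\dots,C_n$ of $\M$, the substructure $C := \la |C_1| \cup \dots \cup |C_n|\ra_\M$ is again finite by local finiteness, so $C \in \K$, and $\binom{C_i}{B} \subseteq \binom{C}{B}$ for every $i$, whence $\emptyset \ne Y_C \subseteq \bigcap_i Y_{C_i}$. By compactness there is $c \in \bigcap_C Y_C$. For any $B' \in \binom{\M}{B}$ the structure $B'$ is finite, so $c \in Y_{B'}$ forces $\bigl|c(\binom{B'}{A})\bigr| \ge 2$; hence $c$ exhibits $\M \not\longrightarrow (B)^A_r$, so $(A,B)$ is not a Ramsey duo for $\M$. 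The one point above that is not pure bookkeeping is the finite-intersection step, and that is precisely where local finiteness of $\M$ is doing work.
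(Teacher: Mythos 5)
Your proof is correct, but it takes a genuinely different route from the paper's for the nontrivial direction. The paper handles ``not a Ramsey duo for $\K$ $\Rightarrow$ not a Ramsey duo for $\M$'' by \emph{first-order} compactness: it expands the signature with predicates $p_C$ naming the isomorphism types of finite substructures and predicates $R_i$ encoding a coloring, writes down a type (over the atomic diagram of the expanded $\M$) asserting that a structure isomorphic to $\M$ carries a bad $r$-coloring, checks finite satisfiability using the bad colorings $f_C$ together with $\K=\age(\M)$, realizes the type, and finally transports the bad coloring back to $\M$ along an isomorphism of $L$-reducts. You instead run the standard \emph{topological} compactness argument of structural Ramsey theory directly on $\M$: Tychonoff compactness of $r^{{\M \choose A}}$ together with the finite intersection property of the clopen sets $Y_C$, with local finiteness entering exactly where finitely many finite substructures must sit inside a common finite substructure. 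Your version is more self-contained --- it avoids the signature expansion and the step of pulling the coloring back from an abstractly constructed copy $\M'$ to $\M$ itself, and it pinpoints the one use of local finiteness --- while the paper's version is deliberately chosen for uniformity with its proof of Lemma \ref{compactness_emb_degrees} (the same type-construction machinery gives the finer statement about Ramsey degrees) and is explicitly advertised as a model-theoretic \emph{alternative} to arguments of your kind in the literature. The easy direction is handled the same way in both (the paper simply asserts it is immediate; you spell out the transfer of the arrow across an isomorphism, which is the right justification). One small point worth noting in your write-up: the nonemptiness of $Y_C$ silently relies on the hypothesis that a bad coloring $c_C$ exists, which in particular rules out the degenerate case of a copy of $B$ in $C$ containing no copy of $A$; this is harmless but deserves the one clause you gave it.
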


A proof for  Proposition \ref{compactness_duo} is straightforward and provided in the Appendix.

%%this was deleted

Given the ability to do calculations in a countably infinite structure, the following is common usage:

\begin{dfn}
We say that \emph{$\A$ has RP} if $\age(\A)$ has RP.  
\end{dfn}

%\vspace{.1in}

Striking connections between dynamics of an automorphism group of an $\omega$-homogeneous structure $\A$ and Ramsey degrees of $\age(\A)$ were established in \cite{kpt05}. The work of these authors and others shows that this relationship is best explained in terms of the Ramsey properties of embeddings rather than substructures.
%For structures $A, B$   in the same signature, we denote by $\emb(A,B)$ the set of all embeddings of $A$ into $B$. A structure $\A$ is called \emph{$\omega$-homogeneous} if every finite partial isomorphisms of $\A$ extends to an automorphism of $\A$.

\begin{dfn}
	Let $\K$ be a class of $L$-structures and let $A\in \K$. We say that $A$ has \emph{finite Ramsey degree for embeddings in $\K$} if there is an integer $d \geq 1$ such that for every $B\in\K$ and every $r\geq 2$ there is $C\in\K$ such that for every coloring $c:\emb(A,C)\raw\{0,1,\ldots,r-1\},$ there is $h\in \emb(B,C)$ such that $c$ on $h\circ\emb(A,B)$ takes at most $d$ colors.
%deleted: =\{h\circ f:f\in\emb(A,B)\}

  We define $d_e(A,\mathcal{K})$ to be the least such integer $d$, if it exists, and otherwise define $d_e(A,\mathcal{K})=\infty$.  If $d_e(A,\K)$ is finite, it is the \emph{Ramsey degree for embeddings of $A$ in $\K$}.

If $d_e(A,\K)=1$ for all $A \in \K$, then we say that $\K$ has the \emph{Ramsey property for embeddings}.
%Lynn 04-15
\end{dfn}

\begin{dfn}
Given an $L$-structure $\M$ and a finitely-generated substructure $A \subseteq \M$, say that $A$ has \emph{finite small Ramsey degree for embeddings in $\M$} if for some integer $d \geq 1$, for all finitely-generated structures $B \subseteq \M$, for all integers $r \geq 2$, 
$$\M \xlongrightarrow{e} (B)^A_{r,d}.$$ 

  We define $d_e(A,\M)$ to be the least such integer $d$, if it exists, and otherwise define $d_e(A,\M)=\infty$.  If $d_e(A,\M)$ is finite, it is the \emph{Ramsey degree for embeddings of $A$ in $\M$}.
  
%If $A$ has finite small Ramsey degree for embeddings in $\M$, then we define $d_e(A,\M)$ to be the least integer $d$ such that for all finitely generated structures $B \subseteq \M$, for all integers $r \geq 2$, $\M \xlongrightarrow{e} (B)^A_{r,d}$.  

%If $A$ does not have finite small Ramsey degree for embeddings in $\M$, we define $d_e(A,\M) = \infty$.

%We say that $(A,B)$ is a \emph{Ramsey duo for embeddings in $\M$} if for all integers $r \geq 2$, $$\M \rightarrow (B)^A_{r}$$
\end{dfn}

It is well known that the Ramsey property for a class $\K$ that is an age of finite structures can be understood as a property of an infinite structure $\M$ with $\age(\M) = \K$. This is proved for a countable age of finite structures in Proposition 3 of \cite{nvt13} using ultrafilters, and the generalization to Ramsey degrees is proved in the more general setting of a category $\mathbb{C}$ with a distinguished subcategory $\mathbb{C}_{\textrm{fin}}$ satisfying certain assumptions in Lemma 3.4 of \cite{draganKPT}.
In the Appendix we provide a model-theoretic argument for Lemma \ref{compactness_emb_degrees} by compactness as an alternative approach.

\begin{lem}\label{compactness_emb_degrees} Fix a signature $L$ and a locally finite $L$-structure $\M$.  Let $\K:=\age(\M)$.  Then, for any finite substructure $A \subseteq \M$,
\begin{enumerate}
    \item $d_e(A,\K) = d_e(A,\M)$, and
    \item $d(A,\K) = d(A,\M)$. 
\end{enumerate}
%(Thus, $A$ has finite Ramsey degree for embeddings in $\K$ if and only if $A$ has finite small Ramsey degree for embeddings in $\M$.)
\end{lem}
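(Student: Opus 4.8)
\textbf{Proof proposal for Lemma \ref{compactness_emb_degrees}.}

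The plan is to prove both equalities by the same compactness argument, treating (1) in detail and noting that (2) is entirely analogous with ${C \choose A}$ in place of $\emb(A,C)$. Fix a finite substructure $A \subseteq \M$. For each of the two quantities there are two inequalities to check. The direction $d_e(A,\M) \leq d_e(A,\K)$ is immediate once we observe that, since $\M$ is locally finite, every finite $B \subseteq \M$ and every $C \in \K$ witnessing $C \xlongrightarrow{e} (B)^A_{r,d}$ embeds into $\M$; given a coloring $c:\emb(A,\M) \raw r$, restrict $c$ along a fixed embedding $g:C \raw \M$ to get a coloring of $\emb(A,C)$, apply the arrow for $C$ to obtain $h \in \emb(B,C)$ with $|c(g\circ h \circ \emb(A,B))| \leq d$, and note $g\circ h \in \emb(B,\M)$ does the job in $\M$. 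Hence $d_e(A,\M) \leq d_e(A,\K)$ (including the case where the right side is $\infty$, which gives nothing to prove).

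For the reverse inequality $d_e(A,\K) \leq d_e(A,\M)$, suppose $d_e(A,\M) = d < \infty$; we must produce, for each finite $B \subseteq \M$ and each $r \geq 2$, a single finite $C \in \K$ with $C \xlongrightarrow{e} (B)^A_{r,d}$. Suppose not: then there is a fixed $B$ and $r$ such that for every finite $C \subseteq \M$ there is a ``bad'' coloring $c_C : \emb(A,C) \raw r$ for which no $h \in \emb(B,C)$ has $|c_C(h \circ \emb(A,B))| \leq d$. Now run a compactness/König-style argument: enumerate $\M$ as an increasing union of finite substructures $C_0 \subseteq C_1 \subseteq \cdots$ (possible since $\M$ is locally finite and, in the relevant case, countable — for the uncountable case pass to a finitely generated, hence countable, elementary-free sub-skeleton containing $A$ and $B$, or phrase the limit step using an ultrafilter on the directed set of finite substructures as in Proposition 3 of \cite{nvt13}). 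The set $\emb(A,\M) = \bigcup_n \emb(A,C_n)$ is countable; consider the space $r^{\emb(A,\M)}$ of all colorings, which is compact. The bad colorings $c_{C_n}$ (extended arbitrarily, or kept as partial colorings on $\emb(A,C_n)$) have a convergent subnet / a limit point $c_\infty : \emb(A,\M) \raw r$ under a nonprincipal ultrafilter. The key point is that ``badness'' is a property witnessed on finite pieces: $h \in \emb(B,C_n)$ and the finitely many embeddings in $h \circ \emb(A,B)$ all lie in $\emb(A,C_m)$ for some $m \geq n$, so if $c_\infty$ restricted to $\emb(A,C_m)$ agreed with some $c_{C_k}$ for $k \geq m$ then that witness would already have been forbidden. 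Conclude that $c_\infty$ is a coloring of $\emb(A,\M)$ with no $h \in \emb(B,\M)$ satisfying $|c_\infty(h \circ \emb(A,B))| \leq d$, contradicting $d_e(A,\M) = d$. Therefore some finite $C \in \K$ works, giving $d_e(A,\K) \leq d = d_e(A,\M)$.

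The argument for (2) is the same, using $\M \longrightarrow (B)^A_{r,d}$: a bad coloring is a function ${C \choose A} \raw r$, a copy $B' \subseteq C$ is determined by finitely much data, and membership of an $A$-substructure in ${B' \choose A}$ is again a finite condition, so the ultrafilter limit of bad colorings on $\bigcup_n {C_n \choose A} = {\M \choose A}$ is a bad coloring for $\M$.

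The main obstacle, and the one requiring care, is the limit step: making precise that ``no $\leq d$-chromatic copy of $B$'' is preserved under the ultrafilter (or subnet) limit of colorings. This hinges on the finiteness of $\emb(A,B)$ (equivalently, on $A$ and $B$ being finite, which uses local finiteness of $\M$) so that the constraint violated by each $c_{C_n}$ is a constraint on only finitely many coordinates of the coloring, hence is an open (indeed clopen) condition in $r^{\emb(A,\M)}$ and survives to the limit. Everything else — the easy direction, the reduction to a countable skeleton, the enumeration as an increasing union — is routine bookkeeping, and this is why the paper relegates the full write-up to the Appendix and merely points to \cite{nvt13} and \cite{draganKPT} for the categorical generalization.
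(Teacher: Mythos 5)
Your proof is correct in substance, but it is not the paper's argument: you have essentially reconstructed the ultrafilter/topological-compactness proof in the style of Proposition 3 of \cite{nvt13} (colorings as points of the compact space $r^{\emb(A,\M)}$, badness as a clopen condition on finitely many coordinates, limit along an ultrafilter), whereas the paper's stated purpose for this Appendix lemma is to give a \emph{model-theoretic alternative} to exactly that route. The paper's proof instead expands the signature: it adds predicates $p_C$ (respectively $p^e_C$ for the embedding version) naming the copies (enumerated copies) of each $C\in\K$, adds predicates $R_0,\dots,R_{r-1}$ for the color classes, writes first-order sentences $\theta_{A,r}$ (``the $R_i$ partition the copies of $A$'') and $\psi_{B,A,r,d}$ (``every copy of $B$ meets at least $d$ color classes''), and observes that $\Diag(\M')\cup\{\theta_{A,r}\wedge\psi_{B,A,r,d}\}$ is finitely satisfiable precisely because every $C\in\K$ admits a bad coloring; first-order compactness then plants a bad coloring on (a copy of) $\M$ itself. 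Both proofs transfer the same inequality ($d(A,\K)\ge d \Rightarrow d(A,\M)\ge d$, the converse being the easy restriction argument you give), and both hinge on the same finiteness: a violation of $\le d$-chromaticity is witnessed by finitely many copies of $A$ inside one copy of $B$, so it is a local condition that survives the limit. What the paper's version buys is that the limit object is produced uniformly by the compactness theorem, with no separate discussion of countability, enumerations, or ultrafilters on directed sets. One genuine blemish in your write-up: the parenthetical suggestion to handle uncountable $\M$ by ``passing to a countable sub-skeleton containing $A$ and $B$'' does not work as stated, since a coloring that is bad on a countable substructure and then extended arbitrarily may admit homogeneous copies of $B$ lying outside that substructure; your other suggestion --- taking the limit along an ultrafilter on the directed set of all finite substructures of $\M$ containing each cone $\{C' : C\subseteq C'\}$ --- is the correct fix and is the one you should keep.
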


The relationship between Ramsey degrees for structures and Ramsey degrees for embeddings has a long history, dating back to work in \cite{abha78}, \cite{rone77}, \cite{fou99}.  This history is cataloged in the introduction to Section 10 of \cite{kpt05}, in which the case of $\K$ having an ordered expansion with RP is worked out in detail.  A more general result in a category theory context is offered by Proposition 3.1 of \cite{draganKPT}.

\begin{prop}[\cite{kpt05,draganKPT}]\label{TwoDegrees} Given a signature $L$ and a class $\K$ of finite $L$-structures and $A \in \K$, if $d_e(A,\K)$ is finite, then so is $d(A,\K)$ and
$$d_e(A,\K) = |\Aut(A)| \cdot d(A,\K).$$
\end{prop}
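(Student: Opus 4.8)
The plan is to relate colorings of embeddings to colorings of substructures via the canonical action of $\Aut(A)$ on $\emb(A,C)$, and to verify two inequalities, $d_e(A,\K) \leq |\Aut(A)| \cdot d(A,\K)$ and $d(A,\K) \leq d_e(A,\K) / |\Aut(A)|$, from which equality follows. First I would fix $A \in \K$ and write $a := |\Aut(A)|$, noting that since $A$ is finitely generated and hence (being finite — we are in the finite-structures setting) $a$ is a finite positive integer. The key observation is that $\emb(A,C)$ decomposes into fibers over ${C \choose A}$: for each $A' \in {C \choose A}$ the set of embeddings with image $A'$ has size exactly $a$, and is a torsor for $\Aut(A)$ acting by precomposition. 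This bijection between $\emb(A,C)$ and ${C \choose A} \times \Aut(A)$ (non-canonically) is what converts degree $d$ on substructures into degree $a \cdot d$ on embeddings and back.

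For the inequality $d_e(A,\K) \leq a \cdot d(A,\K)$, suppose $d := d(A,\K)$ is finite. Given $B \in \K$ and $r \geq 2$, apply the definition of $d(A,\K)$ to get $C \in \K$ with $C \longrightarrow (B)^A_{r,d}$. Given a coloring $c : \emb(A,C) \raw r$, I would push it down to a coloring $c^* : {C \choose A} \raw r^{\Aut(A)}$ by recording, for each copy $A'$, the function sending $\sigma \in \Aut(A)$ to $c(f_{A'} \circ \sigma)$ where $f_{A'}$ is a fixed enumerated embedding with image $A'$; this is an $r^a$-coloring of copies. Since we need the Ramsey property for $r^a$ colors, I would instead choose $C$ at the outset to satisfy $C \longrightarrow (B)^A_{r^a, d}$ (still using finiteness of $d(A,\K)$, which gives such $C$ for every number of colors). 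Then there is $h \in \emb(B,C)$ with $|c^*(\im(h) \choose A)| \leq d$; unwinding, the number of values taken by $c$ on $h \circ \emb(A,B)$ is at most $d \cdot a$, since each of the $\leq d$ values of $c^*$ is a function $\Aut(A) \raw r$ with image of size $\leq a$. Hence $d_e(A,\K) \leq a \cdot d$.

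For the reverse inequality $d(A,\K) \geq a \cdot$ (something), it is cleaner to argue $d(A,\K) \leq d_e(A,\K)/a$ — but one must first see $a \mid d_e(A,\K)$, or argue directly. The direct route: suppose $d_e := d_e(A,\K)$ is finite; given $B, r$, pick $C \in \K$ witnessing $C \xlongrightarrow{e} (B)^A_{r, d_e}$. Given a coloring $c : {C \choose A} \raw r$ of copies, lift it to $\tilde c : \emb(A,C) \raw r$ by $\tilde c(f) := c(\im f)$. Get $h \in \emb(B,C)$ with $|\tilde c(h \circ \emb(A,B))| \leq d_e$. Now $\tilde c$ is $\Aut(A)$-invariant (it depends only on the image), so on $h \circ \emb(A,B)$, which is a union of $\Aut(A)$-orbits each of size $a$ and each monochromatic under $\tilde c$, the $\leq d_e$ colors are actually spread over the copies $\im(h) \choose A$ with multiplicity: the number of distinct colors $c$ takes on $\im(h) \choose A$ is exactly $|\tilde c(h \circ \emb(A,B))| \leq d_e$. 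This only gives $d(A,\K) \leq d_e(A,\K)$, not the sharp bound, so the $\Aut(A)$-invariance must be exploited more carefully: I expect the right argument partitions the $d_e$ "embedding-colors" into $\Aut(A)$-equivalence classes and uses that an invariant coloring realizing $d_e$ colors on a union of full orbits must realize a multiple of... — this is the delicate combinatorial bookkeeping, and it is \textbf{the main obstacle}. The cleanest fix is to prove the two bounds $d_e \leq a \cdot d$ and $d_e \geq a \cdot d$ separately, the latter by taking a worst-case copy-coloring, lifting it, and showing any homogeneous $h$ for the lifted coloring forces at least $a \cdot d$ embedding-colors because the $d$ unavoidable copy-colors each split into $a$ distinct embedding-colors under a suitably chosen (non-invariant) lift — precisely, choose the lift so that $c(f) = (c_0(\im f), \sigma_f)$ records both the copy-color from a $d$-realizing coloring $c_0$ and the group element $\sigma_f$, forcing $a \cdot d$ values. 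Matching these two inequalities yields $d_e(A,\K) = a \cdot d(A,\K)$ exactly, and finiteness of $d_e(A,\K)$ is equivalent to finiteness of $d(A,\K)$ since $a$ is finite.
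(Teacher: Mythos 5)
The paper does not prove Proposition \ref{TwoDegrees}; it is quoted from \cite{kpt05} and \cite{draganKPT} (Proposition 3.1 there), so there is no in-paper argument to compare against. On its own terms, your proposal is correct and is the standard argument: the engine is exactly the observation that for each copy $A' \in {C \choose A}$ the embeddings onto $A'$ form an $\Aut(A)$-torsor of size $a=|\Aut(A)|$, and both inequalities follow. Your upper bound $d_e(A,\K)\le a\cdot d(A,\K)$ via the $r^a$-coloring $c^*$ is fine (and you correctly note you must first get finiteness of $d(A,\K)$ from the easy invariant lift, which gives $d(A,\K)\le d_e(A,\K)$). The only rough spot is the third paragraph, where you spend several sentences on a false start (the $\Aut(A)$-invariant lift, which indeed only yields $d\le d_e$) and label the lower bound ``the main obstacle'' before landing on the right construction; but the construction you end with does close the gap: given $B$ and $r_0$ witnessing that $d-1$ fails for substructures, the product coloring $c(e)=(c_0(\im e),\sigma_e)$ with $r_0\cdot a$ colors forces, on every $h\circ\emb(A,B)$, all $a$ second coordinates above each of the $\ge d$ unavoidable first coordinates, hence $\ge a\cdot d$ values, giving $d_e(A,\K)\ge a\cdot d(A,\K)$. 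If you write this up, state the lower-bound direction directly as ``exhibit $B$ and $r=r_0\cdot a$ such that no $C$ achieves fewer than $a\cdot d$ embedding-colors'' and drop the exploratory detour.
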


Below we state the famous Kechris-Pestov-Todor\v{c}evi\'c correspondence from \cite{kpt05} between the Ramsey property of $\age(\A)$ for a (countable) $\omega$-homogeneous structure $\A$ and the fixed point on compacta property of $\Aut(\A).$ We start by introducing the necessary notions from topological dynamics.

	Let $G$ be a topological group and $X$ a compact Hausdorff space. A continuous function $\alpha:G\times X\to X$ is a \emph{$G$-flow} if 
	\begin{enumerate}
		\item $\alpha(e,x)=x$ for any $x\in X$ and $e$ the neutral element of $G$,
		\item $\alpha(gh,x)=\alpha(g,\alpha(h,x))$ for every $g,h\in G$ and $x\in X$.
	\end{enumerate}
	In, other words, a flow is a continuous group action, where continuity is considered with respect to the product topology. 
	We typically write $gx$ in place of $\alpha(g,x).$
	A $G$-flow on $X$ is \emph{minimal} if $X$ does not contain a non-empty proper closed $G$-invariant subset. A \emph{homomorphism} between $G$-flows $X$ and $Y$ is a $G$-equivariant continuous map $\phi:X\to Y,$ i.e., for every $g\in G$ and $x\in X,$ we have $\phi(gx)=g\phi(x).$ If $\phi$ is onto, we say that $Y$ is a \emph{quotient} of $X$ and if $\phi$ is bijective, it is called an \emph{isomorphism}. Ellis showed that up to isomorphism, for every topological group $G$, there is a unique \emph{universal minimal flow}, $M(G)$, that is, a minimal $G$-flow which has every minimal $G$-flow as a quotient (see \cite{ellis60} for discrete and \cite{ellis69} for arbitrary groups). We call $G$ \emph{extremely amenable} if every $G$-flow $X$ has a fixed point --  a point $x_0\in X$ such that $gx_0=x_0$ for every $g\in G.$ It immediately follows that $G$ is extremely amenable if and only if $M(G)$ is a single point (and thus every minimal $G$-flow is a single point).
	
	\begin{thm}[\cite{kpt05},\cite{bar12} for uncountable structures]\label{ea}
		Let $\A$ be an $\omega$-homogeneous structure. The following are equivalent.
		\begin{enumerate}
			\item The group $\Aut(\A)$ is extremely amenable.
			\item The class $\age(\A)$ satisfies the Ramsey property and consists of rigid elements -- structures whose automorphism group is trivial.
		\end{enumerate}
	\end{thm}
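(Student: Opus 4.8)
The plan is to reproduce a streamlined form of the Kechris--Pestov--Todor\v{c}evi\'c correspondence \cite{kpt05} (the passage to uncountable $\A$ being due to \cite{bar12}), proving the two implications separately and routing everything through the Ramsey property \emph{for embeddings} of $\K:=\age(\A)$. Write $G:=\Aut(\A)$, a closed subgroup of the symmetric group on $|\A|$ with the topology of pointwise convergence; fix an enumeration $(a_i)_{i<\lambda}$ of $|\A|$, set $A_n:=\la a_0,\dots,a_{n-1}\ra_\A$, and observe that the pointwise stabilizers $V_n:=\{g\in G:g\uphp A_n=\id\}$ are open subgroups forming a neighborhood basis at the identity, while $gV_n\mapsto g\uphp A_n$ identifies $G/V_n$ with the $G$-orbit $O_n:=\{g\uphp A_n:g\in G\}\subseteq\emb(A_n,\A)$. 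The first step is the reduction of condition (2): since every element of $\K$ is rigid, Remark \ref{unique} identifies $\emb(A,C)$ with ${C\choose A}$ for $A,C\in\K$ (equivalently, $d_e=d$ on $\K$ by Proposition \ref{TwoDegrees}), so (2) holds iff $\K$ has the Ramsey property for embeddings, and by Lemma \ref{compactness_emb_degrees} this holds iff $\A\xlongrightarrow{e}(B)^A_r$ for all finitely generated $A\subseteq B\subseteq\A$ and all $r\ge1$. So the theorem reduces to showing that this last statement is equivalent to extreme amenability of $G$.

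For $(2)\Rightarrow(1)$ I would use the standard combinatorial characterization of extreme amenability for closed subgroups of $S_\infty$ (see \cite{kpt05}): $G$ is extremely amenable iff for every $n$, every finite coloring of $G/V_n$, and every finite $F\subseteq G/V_n$, there is $g\in G$ with $gF$ monochromatic. (The nontrivial direction of this criterion builds a $G$-flow from a hypothetical failure, using that the universal minimal flow of the non-archimedean group $G$ is zero-dimensional, and a compactness/filter argument then upgrades a ``translate of $F$ inside one piece of each finite cover'' to an actual fixed point.) To verify the condition from the embedding Ramsey property: given $n$, a coloring $c\colon O_n\to r$ (extended arbitrarily to a coloring $\hat c$ of all of $\emb(A_n,\A)$), and a finite $F=\{g_1V_n,\dots,g_kV_n\}$, put $B:=\la A_n\cup\bigcup_{j\le k}g_j(A_n)\ra_\A\in\K$, so $A_n\subseteq B\subseteq\A$ and $g_j\uphp A_n\in\emb(A_n,B)$ for each $j$; applying $\A\xlongrightarrow{e}(B)^{A_n}_r$ to $\hat c$ yields $h\in\emb(B,\A)$ with $\hat c$ constant on $h\circ\emb(A_n,B)$, and $\omega$-homogeneity extends $h$ to $\hat h\in G$; then $(\hat h g_j)\uphp A_n=h\circ(g_j\uphp A_n)\in h\circ\emb(A_n,B)$ lies in $O_n$, so $c$ takes a single value on $\hat hF=\{\hat h g_jV_n:j\le k\}$, as required.

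For $(1)\Rightarrow(2)$, assume $G$ is extremely amenable. For rigidity: the space $\mathrm{LO}(\A)$ of linear orders of $|\A|$ is a nonempty closed $G$-invariant subspace of the compact space $2^{|\A|\times|\A|}$, hence a $G$-flow, so $G$ fixes some $\prec\in\mathrm{LO}(\A)$; given $A\in\K$ with $A\subseteq\A$ and $\sigma\in\Aut(A)$, extend $\sigma$ (a partial isomorphism between the finite generating set of $A$ and its image) to $g\in G$ with $g(A)=A$ and $g\uphp A=\sigma$, and since $g$ preserves $\prec$, $\sigma$ preserves $\prec\uphp|A|$, forcing $\sigma=\id$ because a finite linear order is rigid (the case of infinite members of $\K$ requires the finer analysis of \cite{kpt05,bar12}). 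For the Ramsey property: if some $(A,B)$ were not a Ramsey duo for $\K$, then by Proposition \ref{compactness_duo} it is not one for $\A$, so there are $r$ and a coloring $c^*\colon{\A\choose A}\to r$ admitting no $c^*$-homogeneous copy of $B$; then $\overline{Gc^*}\subseteq r^{{\A\choose A}}$ is a $G$-flow with no fixed point, since a fixed point would be a $G$-invariant coloring, hence constant (because ${\A\choose A}$ is a single $G$-orbit, by $\omega$-homogeneity), hence homogeneous for $B$, whereas every member of $\overline{Gc^*}$ has no homogeneous copy of $B$ --- this being a closed condition on colorings, as each ${B'\choose A}$ is finite, and one holding throughout $Gc^*$. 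This contradicts extreme amenability, so $\K$ has the Ramsey property.

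The step I expect to be the main obstacle is making the combinatorial avatar of extreme amenability in $(2)\Rightarrow(1)$ fully precise: both the reduction to zero-dimensional $G$-flows and the compactness/filter argument producing a genuine fixed point, and the coset bookkeeping that matches translating an embedding of $A_n$ by an automorphism with composing the $\emb(B,\A)$-map supplied by the Ramsey property (so that the monochromatic set really is $\hat hF$ and not some conjugate). Secondary points to watch are that Lemma \ref{compactness_emb_degrees} and Proposition \ref{compactness_duo} are stated for locally finite $\M$ --- automatic for relational $L$, but in general requiring the treatment of \cite{hune19} or the category-theoretic version in \cite{draganKPT} --- that the rigidity argument as given uses finiteness of the members of $\K$, and that the uncountable case of the theorem is handled by localizing to countable substructures or passing to a saturated model as in \cite{bar12}.
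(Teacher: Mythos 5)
The paper does not prove Theorem \ref{ea}: it is quoted as a known result of \cite{kpt05} (with \cite{bar12} for the uncountable case), so there is no internal proof to compare against. Judged on its own terms, your outline is a faithful reconstruction of the standard KPT argument and I see no step that is actually wrong. Your opening reduction of condition (2) to the Ramsey property for embeddings is exactly what the paper itself records in the remark following the theorem (the replacement of (2) by (2$'$), via Proposition \ref{TwoDegrees} and Remark \ref{unique}), and both implications then follow the canonical route: (2)$\Rightarrow$(1) through the combinatorial criterion for extreme amenability of non-archimedean groups applied to the quotients $G/V_n\cong O_n$, with the coset bookkeeping $(\hat h g_j)\uphp A_n = h\circ(g_j\uphp A_n)$ done correctly; (1)$\Rightarrow$(2) through the fixed linear order for rigidity and the orbit closure $\overline{Gc^*}$ of a bad coloring for the Ramsey property, where your observation that ``no $c$-homogeneous copy of $B$'' is closed because each ${B'\choose A}$ is finite, and that a fixed point is constant because ${\A\choose A}$ is a single orbit, is exactly the right mechanism.

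The gaps you flag are the genuine ones, and none of them is a wrong idea. Two remarks. First, the combinatorial characterization of extreme amenability (``for every open $V$, every finite coloring of $G/V$ and every finite $F\subseteq G/V$, some translate $gF$ is monochromatic'') is itself a quotable theorem (Theorem 4.5 of \cite{kpt05}); you do not need to reprove it, and your parenthetical sketch of its proof via zero-dimensionality of the universal minimal flow is not quite how \cite{kpt05} argues (that structural fact about $M(G)$ is a later, finer result) --- better to simply cite the criterion. Second, your repeated reliance on local finiteness (for Lemma \ref{compactness_emb_degrees}, Proposition \ref{compactness_duo}, finiteness of ${B'\choose A}$, and rigidity of finite linear orders) is a real restriction relative to the theorem as literally stated for arbitrary $\omega$-homogeneous $\A$; this is where the treatments of \cite{hune19}, \cite{draganKPT} and \cite{bar12} do their work, and acknowledging it as you do is the honest resolution rather than a defect of the argument.
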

 In fact a consequence of Proposition \ref{TwoDegrees} was observed as Proposition 2.3 in \cite{MaSc17} stated in a more general category theory context, namely that a class $\K$ of finite structures has the Ramsey property for embeddings if and only if $\K$ has the Ramsey property for substructures and every member of $\K$ is rigid.
 Thus item (2) in Theorem \ref{ea} can be  replaced by 
  \begin{enumerate}
  	\item[(2)']
  $\age(\A)$ satisfies the Ramsey property for embeddings. 
\end{enumerate}

In \cite{kpt05}, the authors computed a number of universal minimal flows of automorphism groups of countable $\omega$-homogeneous structures whose ages have finite Ramsey degrees. In fact, Zucker later proved in \cite{zu16} that finite Ramsey degrees are equivalent to the universal minimal flow being metrizable, as stated precisely below.

\begin{thm}[\cite{kpt05}, \cite{zu16}]\label{zucker}
Let $\A$ be a countable $\omega$-homogeneous structure. The following are equivalent.
		\begin{enumerate}
			\item The universal minimal flow $M(\Aut(\A))$ is metrizable.
			\item The class $\age(\A)$ has finite Ramsey degrees for embeddings.
		\end{enumerate}
\end{thm}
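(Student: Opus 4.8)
The plan is to derive Theorem \ref{zucker} from the Kechris--Pestov--Todor\v{c}evi\'c correspondence (Theorem \ref{ea}) together with the standard identification of the universal minimal flow of $G := \Aut(\A)$ with a space of expansions. First I would dispose of the case $|\A|$ finite, where $G$ is finite, $M(G)$ is a point, and $d_e(A,\age(\A)) = 1$ for all $A$, so both conditions hold. Assuming $\A$ countably infinite, $G$ is a closed subgroup of $S_\infty$, and I would pass freely between closed subgroups $G^* \leq G$ and $\omega$-homogeneous expansions $\A^* = (\A,(R_i)_{i\in I})$ of $\A$ by finitary relations, under which: $\A^*$ is \emph{precompact over $\A$} (each finite substructure of $\A$ has finitely many $\age(\A^*)$-expansions) iff $G/G^*$ is precompact; and $\Aut(\A^*)$ is extremely amenable iff $\age(\A^*)$ has the Ramsey property for embeddings (Theorem \ref{ea} in the form $(2)'$, with Proposition \ref{TwoDegrees}).

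For $(2)\Rightarrow(1)$, suppose $\age(\A)$ has finite Ramsey degrees for embeddings. Following Zucker \cite{zu16} I would manufacture a precompact relational expansion $\A^*$ of $\A$ --- with $\A^*\uphp\LA = \A$, by uniqueness of \fr limits --- whose age has the Ramsey property for embeddings and the expansion property relative to $\age(\A)$: there are only countably many isomorphism types in $\age(\A)$, and for each such $A$ one adds finitely many relations recording which of at most $d_e(A,\age(\A))$ canonical classes each embedding of $A$ falls into, then closes the resulting family of expanded finite structures under isomorphism and amalgamation (a final closing-off arranges the expansion property). The technical heart is verifying that this yields a \fr class that is precompact over $\age(\A)$ and still has the Ramsey property --- now \emph{for embeddings}, since the new relations rigidify the structures. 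With $\A^*$ the \fr limit, Theorem \ref{ea} makes $\Aut(\A^*)$ extremely amenable, and the precompact (non-ordered) KPT correspondence identifies $M(G)$ with the orbit closure $\overline{G\cdot\Diag_{\A^*}}$ inside $\prod_{i\in I} 2^{|\A|^{n_i}}$ ($n_i$ the arity of $R_i$). Since only finitely many relations were added per isomorphism type, $I$ is countable, the ambient product is compact metrizable, and hence so is $M(G)$.

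For $(1)\Rightarrow(2)$, suppose $M(G)$ is metrizable. Here I would invoke the structure theory of metrizable universal minimal flows --- the Melleray--Nguyen Van Th\'e--Tsankov theorem that $M(G)$ then has a comeager orbit, and Zucker's solution of the generic point problem \cite{zu16} --- to obtain $M(G) \cong \widehat{G/G^*}$, the completion of a coset space, for a closed $G^* \leq G$ with $G/G^*$ precompact (as $M(G)$ is compact metrizable) and $G^*$ extremely amenable. Passing to the corresponding $\omega$-homogeneous precompact expansion $\A^*$, Theorem \ref{ea} gives that $\age(\A^*)$ has the Ramsey property for embeddings, and precompactness converts this into finite Ramsey degrees for $\age(\A)$: given $A, B \in \age(\A)$ and $r \geq 2$, let $N_A$ be the finite number of $\age(\A^*)$-expansions of $A$ and fix one expansion $B^*$ of $B$; iterating the Ramsey property for embeddings of $\age(\A^*)$ over the finitely many expansions of $A$ produces a single $C \in \age(\A)$ with an expansion $C^*$ such that $C^* \xlongrightarrow{e} (B^*)^{A^*}_{r}$ for every expansion $A^*$ of $A$, and since each $\LA$-embedding $A \to B$ pulls the relations of $B^*$ back to one of the $N_A$ expansions of $A$, taking reducts gives $C \xlongrightarrow{e} (B)^A_{r, N_A}$. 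Hence $d_e(A,\age(\A)) \leq N_A < \infty$.

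The hard part will be $(1)\Rightarrow(2)$: the one genuinely non-formal input is that a metrizable universal minimal flow must carry a comeager orbit and so be of the form $\widehat{G/G^*}$ with an extremely amenable stabilizer --- exactly the Melleray--Nguyen Van Th\'e--Tsankov and Zucker generic-point machinery, which I would not expect to circumvent. The direction $(2)\Rightarrow(1)$ is combinatorially delicate but conceptually routine: it is the bookkeeping of building a precompact Ramsey expansion from the degree bounds and then quoting Theorem \ref{ea}.
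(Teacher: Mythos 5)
This theorem is one the paper quotes from \cite{kpt05} and \cite{zu16} without giving any proof of its own, so there is no in-paper argument to compare against; your proposal is an outline of the standard literature proof, and as such it is essentially correct. Your two directions match the known route: $(2)\Rightarrow(1)$ via construction of a precompact relational expansion with the Ramsey property for embeddings (and expansion property) from the finite degree bounds, followed by the Nguyen Van Th\'e/KPT identification of $M(G)$ inside a compact metrizable product of expansion spaces; and $(1)\Rightarrow(2)$ via the Melleray--Nguyen Van Th\'e--Tsankov structure theorem plus Zucker's generic point theorem to get $M(G)\cong\widehat{G/G^*}$ with $G^*$ extremely amenable and co-precompact, then the finite fibering of $\emb(A,C)$ over the $N_A$ expansions of $A$ to conclude $d_e(A,\age(\A))\le N_A$. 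That last counting argument is correct as you state it. Two small caveats: in the finite case your claim that $d_e(A,\age(\A))=1$ for all $A$ is wrong when $\A$ has finite substructures with nontrivial automorphisms (by Proposition \ref{TwoDegrees} the degree is a multiple of $|\Aut(A)|$); the correct and sufficient observation is that $d_e(A,\age(\A))\le|\emb(A,\A)|<\infty$. And the genuinely hard content --- that finite Ramsey degrees yield a coherent Fra\"iss\'e precompact expansion with the Ramsey and expansion properties, and that metrizable $M(G)$ forces a comeager orbit --- is delegated entirely to the cited results, which is fair for a theorem the paper itself only cites, but means your write-up is a roadmap rather than a self-contained proof.
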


%There is a natural relationship between the Ramsey degree for substructures and the Ramsey degree for embeddings. 
%

\subsection{Semi-retractions}
%%%

In this section, we will review the notions that informed the proof of the Ramsey transfer result Corollary \ref{transfer}.  These notions come from model theory, but we will see later that the more general approach in the current paper allows us to drop some of the assumptions originally thought to be necessary in Corollary \ref{transfer}, as evidenced by Corollary \ref{cor_fmla_free}.
%The following two definitions were introduced in \cite{sc21}.

\begin{dfn}[\cite{sc21}]\label{def_semir}  Given any structures $\A, \B$, not necessarily in the same signature, we say that an injection $h: \A \raw \B$ is
\begin{itemize}
\item[(i)]
 \emph{quantifier-free type-respecting (qftp-respecting)} if for all finite, same-length tuples $\ii, \jj$ from $\A$,
$$\ii \sim_\A \jj \Rightarrow h(\ii) \sim_\B h(\jj) .$$
\item[(ii)]
 \emph{quantifier-free type-preserving (qftp-preserving)} if $\A, \B$ are structures in the same signature and $\qftp^\A(\ii) = \qftp^\B(h(\ii))$ (thus, it is also qftp-respecting).
\end{itemize}
\end{dfn}

\begin{dfn}[\cite{sc21}]\label{semir} Let $\A$, $\B$ be any structures.  We say that $\A$ is a \emph{semi-retract of $\B$ (via $(g,f)$)} if
 
\begin{enumerate}
\item\label{semi1} there exist qftp-respecting injections $\A \xrightarrow{g} \B  \xrightarrow{f} \A$,
\item\label{semi2} such that
$\A \xrightarrow{fg} \A$
is an embedding (equivalently, is qftp-preserving).
\end{enumerate}
We refer to the pair $(g,f)$ as the \emph{semi-retraction between $\A$ and $\B$}.
%March 05
We will refer to property \eqref{semi1} in this Definition as the \textit{qftp-respecting property of semi-retractions} and property \eqref{semi2} as the \textit{composition property of semi-retractions}.
\end{dfn}

\begin{obs}\label{csb} If $\A$ is a semi-retract of $\B$, then $||\A||=||\B||$, by the Schr\"{o}der-Bernstein theorem.
\end{obs}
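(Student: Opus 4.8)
The plan is to unpack the definition of a semi-retraction and then invoke the Cantor--Schr\"oder--Bernstein theorem. By Definition \ref{semir}, if $\A$ is a semi-retract of $\B$ via $(g,f)$, then there are qftp-respecting injections $g : \A \raw \B$ and $f : \B \raw \A$. The key observation is that the qftp-respecting condition is irrelevant for this statement: what matters is only that $g$ and $f$ are injective functions between the underlying sets $|\A|$ and $|\B|$. In particular, the composition property \eqref{semi2} plays no role here.

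So first I would record that we have an injection $|\A| \hookrightarrow |\B|$ (namely $g$) and an injection $|\B| \hookrightarrow |\A|$ (namely $f$). Then Schr\"oder--Bernstein yields a bijection between $|\A|$ and $|\B|$, hence these two sets have the same cardinality, i.e. $||\A|| = ||\B||$.

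There is essentially no obstacle to overcome. The one point worth flagging is that $\A$ and $\B$ need not share a signature, so no structural comparison between the two is available; but cardinality depends only on the underlying set, so this is immaterial. In short, the proof is a one-line appeal to Schr\"oder--Bernstein applied to the two injections supplied by condition \eqref{semi1} of Definition \ref{semir}.
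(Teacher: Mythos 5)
Your proposal is correct and matches the paper's intended argument exactly: the paper offers no proof beyond citing Schr\"oder--Bernstein in the statement itself, since the two injections $g$ and $f$ from Definition \ref{semir} immediately yield a bijection of underlying sets. Your additional remarks (that the qftp-respecting and composition conditions are irrelevant, and that differing signatures do not matter) are accurate but not needed.
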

%deleted: April 3.

\subsection{The modeling property}

The first use of Definition \ref{deltanind} was in \cite{EM56} to construct models with many automorphisms.  The presentation of Definition \ref{deltanind} that we adopt in this paper can be found in \cite{sh78}.

\begin{dfn}[\cite{EM56},\cite{sh78}]\label{deltanind} Given a structure $\M$, a set $\Delta$ of $L(\M)$-formulas, and an integer $n \geq 1$, a
\textit{$\Delta$-$n$-indiscernible} sequence is
a sequence of finite $l$-tuples $\ov{a}_i$ from $\M$ for some integer $l \geq 1$ indexed by some linear order $\I = (|\I|,<)$ such that for all increasing $n$-tuples $\ii,\jj$ from $\I$,
$$\ov{a}_{\ii} \equiv_\Delta^\M \ov{a}_{\jj} .$$
\end{dfn}

In the study of classification theory in model theory there has been significant use of a generalization of this notion named ``$\I$-indexed indiscernible sets'' in \cite{sh78} which we will define as follows.

%\citep[Definition 2.1]{sc15}
%%DEF no assump Def 2.1 2015
\begin{dfn}[\cite{sh78}]\label{genind} Fix a structure $\I$, an integer $l \geq 1$, and $l$-tuples $\ov{a}_i$ from some structure $\M$, for all $i \in \I$.  We say that $(\ov{a}_i \mid i \in \I)$ is an \emph{$\I$-indexed indiscernible set} if for any integer $n \geq 1$, for all $n$-tuples $\ii,\jj$ from $\I$,
$$\ii \sim_\I \jj \Rightarrow  \ov{a}_{\ii} \equiv^\M \ov{a}_{\jj} .$$

We say that $(\ov{a}_i \mid i \in \I)$ is an \emph{$\I$-indexed indiscernible sequence} if $\I$ is an ordered structure, and additionally a \emph{generalized indiscernible sequence} when $\I$ is clear from context.
\end{dfn}

We review some definitions and basic results from \cite{sc21}, where the second author gave a more complete proof that the modeling property is a model-theoretic analogue of the Ramsey property.  
%In our applications below, the infinite structure $\I$ is typically replaced by $\A$. 

%We repeat definitions from \cite{sc15} as Definition \ref{em} and Definition \ref{mpd}.

%%DEF no assump  Def 2.6 2015
\begin{dfn}[\cite{sc15}]\label{em}  Given an integer $l \geq 1$, an $L'$-structure $\I$, an $L$-structure $\M$ and an $\I$-indexed set of $l$-tuples from $\M$, $X = (\ov{a}_i \mid i \in \I)$, we define the \emph{EM-type of $X$ ($\emtp(X)$)} to be a syntactic type in variables $(\ov{x}_i \mid i \in \I)$, where $|\ov{x}_i| = l$ for each $i \in \I$, as follows:
$$\emtp(X) = \{ \psi(\ov{x}_{i_0},\ldots,\ov{x}_{i_{n-1}}) \mid \psi \in L, \ov{\imath} \in {|\I|}^n \textrm{~and~} (\forall  \ov{\jmath} \in {|\I|}^n)( \ov{\jmath} \sim_\I \ov{\imath} \Rightarrow \M \vDash \psi(\ov{a}_{j_0},\ldots,\ov{a}_{j_{n-1}})) \}$$
\end{dfn}

%Proposition \ref{useful} is a useful equivalence which follows directly from Definition \ref{em} (see Proposition 2 of \cite{sc15} for more details):

%%PROP no assump
\begin{prop}[Proposition 2 of \cite{sc15}]\label{useful}Given an $L'$-structure $\I$ and an $L$-structure $\M$,
fix sets of $l$-tuples from $\M$ indexed by $\I$,
$X = (\ov{a}_i \mid i \in \I)$ and
$Y = (\ov{b}_i \mid i \in \I)$.
$Y \vDash \emtp(X)$ if and only if for any integer $n \geq 1$, for all complete quantifier-free $n$-types $\eta$ in $\I$ and all $n \cdot l$-ary formulas $\varphi \in L$, if we have the \emph{rule}
$$(\forall \jj) ( \I \vDash \eta(\jj) \Rightarrow \M \vDash \varphi(\ov{a}_{\jj}))$$
then we have the rule
$$(\forall \jj) ( \I \vDash \eta(\jj) \Rightarrow \M \vDash \varphi(\ov{b}_{\jj}))$$
\end{prop}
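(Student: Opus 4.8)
The plan is to prove both implications by directly unwinding the definition of $\emtp(X)$ together with the ``rules'' in the statement; the crux — really the only idea — is a dictionary between the two formulations. Namely, a complete quantifier-free $n$-type $\eta$ of $\I$ that is \emph{realized} in $\I$ encodes exactly one $\sim_\I$-class of $n$-tuples: since $\eta$ is complete, ``$\I \vDash \eta(\jj)$'' is equivalent to $\qftp^\I(\jj) = \eta$, so for any realization $\ii$ of $\eta$ we have $\{\jj \in |\I|^n : \I \vDash \eta(\jj)\} = \{\jj \in |\I|^n : \jj \sim_\I \ii\}$; and conversely $\qftp^\I(\ii)$ is such a type. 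I would begin by setting aside the types $\eta$ that are not realized in $\I$: for those both the premise rule $(\forall \jj)(\I \vDash \eta(\jj) \Rightarrow \M \vDash \varphi(\ov a_\jj))$ and the conclusion rule hold vacuously, so the right-hand side of the biconditional imposes nothing on them, and I may assume $\eta = \qftp^\I(\ii)$ for some $\ii \in |\I|^n$.

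For ($\Leftarrow$), assume the rule-transfer condition and take an arbitrary $\psi(\ov x_{i_0},\ldots,\ov x_{i_{n-1}}) \in \emtp(X)$; it suffices to deduce $\M \vDash \psi(\ov b_{i_0},\ldots,\ov b_{i_{n-1}})$, which is all $Y \vDash \emtp(X)$ asks. Put $\eta := \qftp^\I(\ii)$ and regard $\psi$ as the $n\cdot l$-ary $L$-formula $\varphi$. By definition of $\emtp$, membership of $\psi$ says $\M \vDash \psi(\ov a_\jj)$ for every $\jj \sim_\I \ii$, which by the dictionary is exactly the premise rule for $\eta$ and $\varphi$. The hypothesis then yields $(\forall \jj)(\I \vDash \eta(\jj) \Rightarrow \M \vDash \varphi(\ov b_\jj))$, and instantiating at $\jj := \ii$ gives $\M \vDash \psi(\ov b_{i_0},\ldots,\ov b_{i_{n-1}})$.

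For ($\Rightarrow$), assume $Y \vDash \emtp(X)$, fix $n$, a realized complete quantifier-free $n$-type $\eta$, an $n\cdot l$-ary $\varphi \in L$, and suppose $(\forall \jj)(\I \vDash \eta(\jj) \Rightarrow \M \vDash \varphi(\ov a_\jj))$. Let $\jj$ be any tuple with $\I \vDash \eta(\jj)$; then $\eta = \qftp^\I(\jj)$, so every $\jj'$ with $\I \vDash \eta(\jj')$ satisfies $\jj' \sim_\I \jj$, and the hypothesis gives $\M \vDash \varphi(\ov a_{\jj'})$ for all such $\jj'$. This is precisely the condition for $\varphi(\ov x_{j_0},\ldots,\ov x_{j_{n-1}}) \in \emtp(X)$, so $Y \vDash \emtp(X)$ gives $\M \vDash \varphi(\ov b_{j_0},\ldots,\ov b_{j_{n-1}})$, i.e. $\M \vDash \varphi(\ov b_\jj)$. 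Since $\jj$ ranged over all realizations of $\eta$, the conclusion rule follows.

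I do not anticipate a real obstacle; the argument is pure bookkeeping once the dictionary is in place. The points that need care are: keeping the block structure of the $n\cdot l$-ary formula $\varphi$ aligned with the $l$-tuples $\ov a_i$, $\ov b_i$ and letting $\jj$ range over $|\I|^n$ consistently throughout; allowing repeated entries in $\ii$ and $\jj$, which is harmless because a complete quantifier-free type decides all equalities $x_a = x_b$ and hence $\sim_\I$ already records the equality pattern; and the initial reduction to realized types $\eta$ noted above.
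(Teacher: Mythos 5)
Your proof is correct. The paper itself states Proposition \ref{useful} without proof, citing \cite{sc15}, so there is no in-paper argument to compare against; but your argument is exactly the expected one, and it matches in spirit the definition-unwinding the authors do carry out for the companion Proposition \ref{twodefs}. The one genuinely load-bearing observation --- that for a \emph{complete} quantifier-free $n$-type $\eta$ realized in $\I$, the set of $\jj$ with $\I \vDash \eta(\jj)$ is precisely one $\sim_\I$-class, so the ``rules'' indexed by realized types are in bijection with the membership conditions defining $\emtp(X)$ --- is stated and used correctly in both directions, and your preliminary disposal of unrealized types (where both rules hold vacuously) is the right way to handle the only edge case.
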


%%%%
\begin{dfn}[\cite{sc15}]\label{locbased}  Fix  sequences of parameters $X = (\ov{a}_i \mid i \in \A)$, $Y = (\ov{b}_i \mid i \in \A)$, where $\ov{a}_i, \ov{b}_i$ are from some $L$-structure $\M$.

We say \emph{$Y$ is locally based on $X$} if for any finite set of $L$-formulas, $\Delta$, and for any finite tuple $(j_0, \ldots, j_{n-1})$ from $\A$, there exists a tuple $(i_0, \ldots, i_{n-1})$ from $\A$ such that 

$$\ov{\jmath} \sim_\A \ov{\imath}$$

and

$$\ov{b}_{\jj} \equiv^\M_\Delta \ov{a}_{\ii}$$

\noindent Where $Y$ and $X$ are understood from context, this property will be referred to as \emph{local basedness}.
\end{dfn}

\noindent We give a proof sketch to illustrate the idea behind Proposition \ref{twodefs}.
\begin{prop}[Proposition 2 in \cite{sc15}]\label{twodefs}  Fix  sequences of parameters $X = (\ov{a}_i \mid i \in \I)$, $Y = (\ov{b}_i \mid i \in \I)$, where $\ov{a}_i, \ov{b}_i$ are $l$-tuples from some $L$-structure $\M$, for some integer $l \geq 1$. We have that  $Y$ is locally based on $X$ if and only if $Y \vDash \emtp(X)$.
\end{prop}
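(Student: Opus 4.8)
The plan is to prove the two implications separately, since local basedness and modeling $\emtp(X)$ are both ``for all, there exists'' or ``for all, if-then'' statements over finite data, and Proposition \ref{useful} gives us a convenient reformulation of $Y \vDash \emtp(X)$ in terms of quantifier-free types in $\I$ and single formulas of $L$.

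\textbf{Local basedness $\Rightarrow$ $Y \vDash \emtp(X)$.} Using Proposition \ref{useful}, it suffices to check the following: fix $n \geq 1$, a complete quantifier-free $n$-type $\eta$ realized in $\I$, and an $n \cdot l$-ary $L$-formula $\varphi$, and suppose the rule $(\forall \jj)(\I \vDash \eta(\jj) \Rightarrow \M \vDash \varphi(\ov{a}_{\jj}))$ holds for $X$. I want the same rule for $Y$. So fix $\jj$ with $\I \vDash \eta(\jj)$. Apply local basedness with $\Delta := \{\varphi\}$ (or $\{\varphi, \neg\varphi\}$) and this tuple $\jj$ to obtain $\ii$ from $\I$ with $\jj \sim_\I \ii$ and $\ov{b}_{\jj} \equiv^\M_\Delta \ov{a}_{\ii}$. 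Since $\eta$ is a \emph{complete} quantifier-free type and $\jj \sim_\I \ii$, also $\I \vDash \eta(\ii)$, so the hypothesised rule for $X$ gives $\M \vDash \varphi(\ov{a}_{\ii})$; then $\ov{b}_{\jj} \equiv^\M_\Delta \ov{a}_{\ii}$ transfers this to $\M \vDash \varphi(\ov{b}_{\jj})$, as desired.

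\textbf{$Y \vDash \emtp(X)$ $\Rightarrow$ local basedness.} Fix a finite set $\Delta$ of $L$-formulas, which we may assume is closed under negation, and a tuple $\jj = (j_0,\ldots,j_{n-1})$ from $\I$. Let $\eta$ be the complete quantifier-free $n$-type of $\jj$ in $\I$, and let $\varphi(\ov{x}_0,\ldots,\ov{x}_{n-1})$ be the conjunction of all $\delta \in \Delta$ (each on the appropriate variables) that hold of $\ov{b}_{\jj}$ in $\M$; so $\M \vDash \varphi(\ov{b}_{\jj})$, and $\varphi$ pins down $\tp_\Delta^\M(\ov{b}_{\jj})$ among tuples. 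Consider the rule $(\forall \ii)(\I \vDash \eta(\ii) \Rightarrow \M \vDash \varphi(\ov{a}_{\ii}))$: either it holds for $X$, in which case by $Y \vDash \emtp(X)$ (via Proposition \ref{useful}, using that $\eta$ is realized by $\jj$) it holds for $Y$, giving $\M \vDash \varphi(\ov{a}_{\ii})$ for \emph{all} $\ii$ realizing $\eta$ — but we already know $\M \vDash \varphi(\ov{b}_{\jj})$, so this doesn't immediately produce the matching $\ii$. The cleaner route is: suppose toward a contradiction that \emph{no} $\ii$ with $\jj \sim_\I \ii$ satisfies $\ov{b}_{\jj} \equiv^\M_\Delta \ov{a}_{\ii}$, i.e., every such $\ii$ has $\M \vDash \neg\varphi(\ov{a}_{\ii})$; but $\neg\varphi$ is (equivalent to) a disjunction of formulas whose conjuncts come from $\Delta$, and unwinding this says the rule $(\forall \ii)(\I \vDash \eta(\ii) \Rightarrow \M \vDash \neg\varphi(\ov{a}_{\ii}))$ holds for $X$, hence by $Y \vDash \emtp(X)$ it holds for $Y$ as well, giving $\M \vDash \neg\varphi(\ov{b}_{\jj})$ since $\jj$ realizes $\eta$ — contradicting $\M \vDash \varphi(\ov{b}_{\jj})$. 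Therefore some $\ii$ with $\jj \sim_\I \ii$ has $\M \vDash \varphi(\ov{a}_{\ii})$, and since $\varphi$ encodes $\tp_\Delta^\M(\ov{b}_{\jj})$ and $\Delta$ is finite and negation-closed, this forces $\ov{b}_{\jj} \equiv^\M_\Delta \ov{a}_{\ii}$.

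\textbf{Main obstacle.} The subtle point is the bookkeeping in the second direction: passing between ``$\ov{b}_{\jj} \equiv^\M_\Delta \ov{a}_{\ii}$ for some $\ii$'' and a single-formula rule of the shape required by Proposition \ref{useful}. One must be careful that $\Delta$ is finite so that $\tp_\Delta^\M(\ov{b}_{\jj})$ is captured by one formula $\varphi$, that $\eta$ is a \emph{complete} quantifier-free type so that $\jj \sim_\I \ii \iff \I \vDash \eta(\ii)$, and that the negation $\neg\varphi$ can still be fed through Proposition \ref{useful} (which quantifies over all $L$-formulas, so this is fine). Everything else is a routine unwinding of the definitions, and I expect the proof in the paper to compress the second direction into essentially the contrapositive argument sketched above.
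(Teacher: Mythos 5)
Your proposal is correct and follows essentially the same route as the paper: the direction from local basedness to $Y \vDash \emtp(X)$ is the same direct argument (instantiate local basedness at $\Delta = \{\varphi\}$ and transfer the rule), and the converse is the same contrapositive argument, taking $\varphi$ to be the conjunction of the complete $\Delta$-type of $\ov{b}_{\jj}$ and feeding the rule for $\neg\varphi$ through $\emtp(X)$ to reach a contradiction. The only difference is cosmetic: the paper states the two directions in the opposite order and omits the abandoned ``direct'' attempt in your second paragraph.
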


\begin{proof}  Suppose $Y \vDash \emtp(X)$.  To show local basedness, fix a finite set of $L$-formulas $\Delta$ and a finite tuple $\ov{\jmath}$ from $\I$ with complete quantifier-free type $\eta$ in $\I$.  Let $\varphi$ be a conjunction of all the formulas in the finite $\Delta$-type of $\ov{b}_{\ov{\jmath}}$.  Suppose, for contradiction, there is no $\ov{\imath} \sim_\I \ov{\jmath}$ such that $\ov{a}_{\ov{\imath}} \equiv^\M_\Delta  \ov{b}_{\jmath}$.   Then
$$(\forall \ii) ( \I \vDash \eta(\ii) \Rightarrow \M \vDash \neg \varphi(\ov{a}_{\ii})).$$
Since $Y \vDash \emtp(X)$, we must have that
$$(\forall \jj) ( \I \vDash \eta(\jj) \Rightarrow \M \vDash \neg \varphi(\ov{b}_{\jj})),$$
which contradicts the $\Delta$-type of $\ov{b}_{\jj}$.

Suppose $Y$ is locally based on $X$.  To show $Y \vDash \emtp(X)$, consider a rule from $\emtp(X)$:
$$(\forall \ii) ( \I \vDash \eta(\ii) \Rightarrow \M \vDash \varphi(\ov{a}_{\ii})).$$
Fix any $\jj$ from $\I$ such that $\I \vDash \eta(\jj)$.  By local basedness, there is $\ii \sim_\I \jj$ such that $  \ov{b}_{\jj} \equiv_{\{\varphi\}} \ov{a}_{\ii}$.  By the rule for $\eta$, $\M \vDash \varphi(\ov{a}_{\ii})$.  Thus we have that $\M \vDash \varphi(\ov{b}_{\jj})$, as well.  And so we have proved the rule
$$(\forall \jj) ( \I \vDash \eta(\jj) \Rightarrow \M \vDash \varphi(\ov{b}_{\jj})).$$
Since this is true for any rule, $Y \vDash \emtp(X)$.
\end{proof}

%If $\I$ is ordered by a 0-definable relation in $\I$, it is trivial to produce $\I$-indexed indiscernible sets, by Ramsey's theorem for finite sequences.  The following property guarantees that we can produce $\I$-indexed indiscernible sets that witness additional structure.

%%DEF no assump, equiv to Def 3.1 by Prop 2(3)(4) in 2015
\begin{dfn}[\cite{sc15}]\label{mpd} 
Given a structure $\I$, we say that $\I$-indexed indiscernible sets have the \emph{modeling property} if for any  integer $l \geq 1$, any $||\I||^+$-saturated structure $\M$, and any $\I$-indexed set of $l$-tuples from $\M$ 
$$X = (\ov{a}_i \mid i \in \I) ,$$
there exists an $\I$-indexed indiscernible set of $l$-tuples from $\M$
$$Y = (\ov{b}_i \mid i \in \I),$$
such that $Y \vDash \emtp(X)$ \textit{(equivalently, $Y$ is locally based on $X$, by Proposition \ref{twodefs}.)}

%We say that $Y$ is \emph{locally based} on $X$.
\end{dfn}

\begin{rmk}\label{rmk_saturation}  In fact, it suffices to require that $\M$ in Definition \ref{mpd} be $||\I||$-saturated, since the type describing $Y$ has $||\I||$ variables and no parameters from $\M$, as noted in Remark \ref{shelah}.  Previously, in \cite{sc15}, the bound was required to be $||\I||^+$ not $||\I||$.
\end{rmk}

\begin{thm}[Dictionary Theorem,\cite{sc12,sc15,sc21}]\label{dictionary} Suppose that $\I$ is a locally finite ordered structure.
Then $\I$-indexed indiscernible sequences have the modeling property if and only if $\age(\I)$ has RP.
\end{thm}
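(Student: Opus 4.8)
The plan is to prove the two implications of the equivalence separately. In both directions I will use that $\age(\I)$ consists of finite structures (local finiteness) and that, since $\I$ is ordered, its age consists of rigid structures (the only automorphism of a finite linear order is trivial), so that Remark \ref{unique} applies and ``copies'' may be identified with embeddings. I will also pass freely between ``$Y$ is locally based on $X$'' and ``$Y \vDash \emtp(X)$'' via Proposition \ref{twodefs}, and work with $||\I||$-saturated targets by Remark \ref{rmk_saturation}.

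\emph{Ramsey property $\Rightarrow$ modeling property.} Fix $l$, a $||\I||$-saturated $\M$, and $X = (\ov{a}_i \mid i \in \I)$. First I would write down a partial type $p$ over $\emptyset$ in the variables $\ov{x}_i$ ($i \in \I$, each of length $l$) whose conditions are: (i) for every $L(\M)$-formula $\varphi$ and all finite tuples $\ii \sim_\I \jj$ from $\I$, the biconditional $\varphi(\ov{x}_{\ii}) \leftrightarrow \varphi(\ov{x}_{\jj})$; and (ii) for every finite set $\Delta$ of $L(\M)$-formulas and every finite tuple $\ii$ from $\I$, the finite disjunction $\bigvee_q \psi_q(\ov{x}_{\ii})$ over the finitely many complete $\Delta$-types $q$ realized by some $\ov{a}_{\ov{w}}$ with $\ov{w} \sim_\I \ii$ (where $\psi_q$ conjoins $q$). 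Any $Y = (\ov{b}_i \mid i \in \I)$ realizing $p$ is $\I$-indexed indiscernible by (i) and locally based on $X$ by (ii), so by saturation it suffices to prove $p$ is finitely satisfiable in $\M$. A finite fragment mentions only a finite set $\Delta_0$ of formulas and finitely many indices, which by local finiteness generate a finite substructure $\I_0 \subseteq \I$; hence it suffices to produce an embedding $\sigma : \I_0 \to \I$ with $\ov{a}_{\sigma(\ov{u})} \equiv^\M_{\Delta_0} \ov{a}_{\sigma(\ov{v})}$ whenever $\ov{u} \sim_\I \ov{v}$ are tuples from $\I_0$ (then $\ov{c}_i := \ov{a}_{\sigma(i)}$ witnesses the fragment). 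To build such a $\sigma$, I would enumerate the finitely many isomorphism types $D_1, \dots, D_m$ of substructures of $\I_0$ and, for a copy $D'$ of $D_j$ inside a member of $\age(\I)$ with a fixed embedding into $\I$, record the finite tuple of $\Delta_0$-types of the corresponding $\ov{a}$-images as a color. Starting from $C_0 := \I_0$, I would use the Ramsey property to pick $C_k \in \age(\I)$ with $C_k \longrightarrow (C_{k-1})^{D_k}_{r_k}$, and then, inside a fixed embedding $C_m \hookrightarrow \I$, work downward to extract a copy $C_0' \cong \I_0$ that is monochromatic for every $D_j$ at once --- the key point being that once a subcopy is monochromatic for $D_j$, so is any smaller copy inside it. Composing $\I_0 \xrightarrow{\sim} C_0' \hookrightarrow C_m \hookrightarrow \I$ gives $\sigma$: if $\ov{u} \sim_\I \ov{v}$ then $\langle \ov{u} \rangle$ and $\langle \ov{v} \rangle$ are copies of a common $D_j$, matched by the unique (by rigidity) isomorphism sending $\ov{u}$ to $\ov{v}$, so monochromaticity yields $\ov{a}_{\sigma(\ov{u})} \equiv^\M_{\Delta_0} \ov{a}_{\sigma(\ov{v})}$.

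\emph{Modeling property $\Rightarrow$ Ramsey property.} By Proposition \ref{compactness_duo} it suffices to show $\I \longrightarrow (B)^A_r$ for all finite $A, B \subseteq \I$ and all $r \ge 2$. Fix a coloring $c : \binom{\I}{A} \to r$. Using the order on $\I$, let $\ov{p}$ and $\ov{q}$ be the increasing enumerations of the full underlying sets of $A$ and $B$, with $\eta_A := \qftp^\I(\ov{p})$ and $\eta_B := \qftp^\I(\ov{q})$; by rigidity, copies of $A$ in $\I$ biject with tuples realizing $\eta_A$, and copies of $A$ inside $B$ biject with a fixed finite set $M$ of coordinate-selections of $\ov{q}$. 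I would then expand $\I$ to an $(L(\I) \cup \{P_0, \dots, P_{r-1}\})$-structure $\M$, with each $P_j$ of arity $|\ov{p}|$ interpreted as the set of tuples realizing $\eta_A$ whose associated copy of $A$ has color $j$. Pass to a $||\I||$-saturated $\M^* \succeq \M$, set $X := (a_i \mid i \in \I)$ with $a_i := i$, and apply the modeling property to get an $\I$-indexed indiscernible $Y = (b_i \mid i \in \I)$ in $\M^*$ locally based on $X$. Local basedness with $\Delta = \{P_0, \dots, P_{r-1}\}$ forces each $\ov{b}_{\ov{s}}$ with $\ov{s} \vDash \eta_A$ to satisfy exactly one $P_j$, and indiscernibility (all such $\ov{s}$ are $\sim_\I$-equivalent) makes that index a single value $j^*$. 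Finally I would apply local basedness to $\ov{q}$ with the finite set $\Delta^* := \{P_j(x_{\mu(0)}, \dots) \mid j < r,\ \mu \in M\}$ to get $\ov{t} \sim_\I \ov{q}$ --- hence a genuine copy $\langle \ov{t} \rangle \cong B$ in $\I$ --- with $\ov{b}_{\ov{q}} \equiv^{\M^*}_{\Delta^*} \ov{t}$; for each $\mu \in M$, the $\mu$-subtuple of $\ov{t}$ is a copy of $A$ in $\langle \ov{t} \rangle$ whose color equals the unique $P_j$ satisfied by the $\mu$-subtuple $\ov{b}_{(q_{\mu(i)})_i}$ of $\ov{b}_{\ov{q}}$, which is $j^*$ by the previous step. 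Thus $\langle \ov{t} \rangle$ is monochromatic, giving $\I \longrightarrow (B)^A_r$.

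I expect the main obstacle to be the combinatorial core of the first implication: extracting, by iterating the Ramsey property within $\age(\I)$, a single copy of $\I_0$ in $\I$ that is simultaneously $\Delta_0$-indiscernible for all isomorphism types of its substructures, together with the bookkeeping --- legitimate precisely because ordered structures have rigid ages, so all relevant isomorphisms are unique --- needed to see that nested monochromatic copies remain monochromatic for the earlier types. Getting the quantification over all finite $\Delta$ right in the partial type $p$, and checking that a finite fragment only involves a finite substructure of $\I$, are the remaining points requiring care; both use local finiteness essentially.
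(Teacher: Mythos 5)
This theorem is quoted in the paper from \cite{sc12,sc15,sc21} and no proof is given in the present article, so there is no in-paper argument to compare against; judged on its own, your proof is correct and is essentially the standard argument of the cited sources. Your forward direction (write the conjunction of the indiscernibility scheme and the local-basedness scheme as a partial type, reduce finite satisfiability to finding a copy of a finite $\I_0\subseteq\I$ that is $\Delta_0$-homogeneous simultaneously for all isomorphism types of its substructures, and extract that copy by iterating the Ramsey property once per type) and your reverse direction (code a coloring of copies of $A$ by predicates $P_j$ on increasing enumerations, pass to a saturated elementary extension, and read a monochromatic copy of $B$ off a locally based indiscernible) are exactly the two halves of the dictionary as proved in \cite{sc15,sc21}. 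The points you flag as needing care are the right ones, and both are handled correctly: rigidity of finite ordered structures makes the ``canonical enumeration'' colorings well defined and makes monochromaticity pass to subtuples via the unique isomorphisms, and local finiteness is what guarantees that a finite fragment of the type only involves a finite substructure $\I_0$ and that realizations of $\eta_A$, $\eta_B$ enumerate genuine substructures.
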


%\bigskip

Theorem \ref{dictionary} fails when we drop order:

\begin{ex}\label{noorder} Let $\I=(\Nn,=)$ and note that $\age(\I)$ has RP by Example \ref{ex_RP}.  If we take an $\I$-indexed set in $\M:=(\Nn,<)$, $X = (i \mid i \in \I)$, then there is no $\I$-indexed indiscernible set in any extension $\M' \succeq \M$ locally based on $X$.  Such a set would need to have $\tp^{\M'}(i,j)=\tp^{\M'}(j,i)$ for $i\neq j\in \Nn$, which is not possible.  
\end{ex}
%also possible to have a constant for every element.

Example \ref{noorder} illustrates why rigidity has been important in applications of structural Ramsey theory to generalized indiscernible sequences in model theory.  Consider an injection $f: \I \rightarrow \M$ and parameters $(a_i : i \in \I)$ such that $a_i = f(i)$, for all $i \in \I$.  Given a finite substructure $A \subseteq \I$ of size $n$, the injection $f$ induces a (possibly infinite) coloring on tuples $(a_0,\ldots,a_{n})$ such that $\ran (\ov{a}) \cong A$, where $\tp^\M (f(\ov{a}))$ is the color of $\ov{a}$.  Thus, if finitely-generated substructures of $\M$ are rigid, then we will not have an $\I$-indexed indiscernible set locally based on $(a_i : i \in \I)$ if $A$ has a nontrivial automorphism.  Note that the modeling property (Definition \ref{mpd}) is a universal statement about all structures $\M$, and so it would immediately fail for $\I$-indexed indiscernible sets if $\I$ is not rigid.  The reason for the modeling property to be a universal property is so that $\I$ can function as a tool in classification theory to compare all theories, even those theories whose models have rigid finitely-generated substructures.  In fact, theories whose models are linearly ordered by a formula in the language play an important role in classification theory as they are unstable and have the strict order property.

Theorem \ref{dictionary} fails when we drop local finiteness:

\begin{ex} Let $\I = (\Z,p,s,<)$ be the structure on $\Z$ with the usual order $<$ and where $p, s$ are unary function symbols interpreted as ``predecessor'' and ``successor'', respectively.  The only possible finitely generated substructure of $\Z$ is the whole structure.  Since $||\age(\I)||=1$, the class trivially has RP.  However, we will show that $\I$-indexed indiscernibles do not have the modeling property, showing the essentialness of the assumption that $\I$ be locally finite in Theorem \ref{dictionary}. Let $\M$ be the \fr~limit of finite convexly ordered equivalence classes in signature $\{E,\prec\}$.  Let $X=(a_i \mid i \in \Z)$ be such that all $a_i$ for $i$ odd are in one $E$-class that we call $\texttt{Odd}$ and all $a_j$ for $j$ even are in a separate $E$-class that we call $\texttt{Even}$.  Moreover, let $i < j \Rightarrow a_i \prec a_j$, and $\texttt{Odd}<\texttt{Even}$ in $\M$.  Within this example, we use $\sim$ to denote $E$-equivalence in the following visualization, where elements are listed in $\prec$-increasing order in $\M$:
$$\ldots a_1 \sim a_3 \sim a_5 \ldots \nsim \ldots a_2 \sim a_4 \sim a_6 \ldots$$
The type $\emtp(X)$ requires that whenever $j=s(s(i))$, $x_i \prec x_j$ and $E(x_i,x_j)$.  However, a decision is not made about $x_i \prec x_j$ when $j=s(i)$, since sometimes $a_i \prec a_j$ and sometimes $a_j \prec a_i$, when $j=s(i)$, though $a_i, a_j$ are always $E$-inequivalent when $j=s(i)$.  Suppose there is an $\I$-indexed indiscernible set locally based on $X$.  If this indiscernible set chooses the rule that $b_i \prec b_j$, whenever $j=s(i)$, then $\M$ would admit equivalence classes that are not convexly ordered, e.g. with $b_1 \sim b_3$ but
$$\ldots b_1 \nsim b_2 \nsim b_3 \ldots$$
 a contradiction.  The alternative, choosing $b_i \succ b_{s(i)}$, is incompatible with $b_i \prec b_{s(s(i))}$.
\end{ex}

\begin{thm}[\cite{sc21}]\label{thm1}  Let $\A$ and $\B$ be any 
%don't need countably infinite   Fri July 24
structures.  Suppose that $\A$ is a semi-retract of $\B$.  Furthermore, suppose that $\B$-indexed indiscernible sets have the modeling property.  
Then $\A$-indexed indiscernible sets have the modeling property.
\end{thm}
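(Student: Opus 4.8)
The plan is to push the given $\A$-indexed set over to $\B$ along $f$, apply the modeling property there, and pull the resulting $\B$-indexed indiscernible set back to $\A$ along $g$. Suppose $\A$ is a semi-retract of $\B$ via $(g,f)$, with $\A \xrightarrow{g} \B \xrightarrow{f} \A$. To verify the modeling property for $\A$, fix an integer $l \geq 1$, a $||\A||$-saturated structure $\M$ (the relevant hypothesis, by Remark \ref{rmk_saturation}), and an $\A$-indexed set $X = (\ov a_i \mid i \in \A)$ of $l$-tuples from $\M$; since $||\A|| = ||\B||$ by Observation \ref{csb}, $\M$ is also $||\B||$-saturated. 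First I would form the $\B$-indexed set $Z = (\ov c_j \mid j \in \B)$ of $l$-tuples from $\M$ defined by $\ov c_j := \ov a_{f(j)}$ --- note it is $f$, not $g$, that makes this well-defined on all of $\B$, since $g$ need not be surjective. The modeling property for $\B$ then produces a $\B$-indexed indiscernible set $W = (\ov d_j \mid j \in \B)$ of $l$-tuples from $\M$ with $W \vDash \emtp(Z)$, equivalently, $W$ locally based on $Z$ (Proposition \ref{twodefs}). Finally I would set $\ov b_i := \ov d_{g(i)}$, giving an $\A$-indexed set $Y = (\ov b_i \mid i \in \A)$ of $l$-tuples from $\M$ with $\ov b_\ii = \ov d_{g(\ii)}$ for every tuple $\ii$ from $\A$, and the goal is to show $Y$ is $\A$-indexed indiscernible and locally based on $X$; by Proposition \ref{twodefs} this is precisely the modeling property for $\A$.

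Indiscernibility of $Y$ should be routine: given same-length tuples $\ii \sim_\A \jj$ from $\A$, since $g$ is qftp-respecting (the qftp-respecting property of semi-retractions) we get $g(\ii) \sim_\B g(\jj)$, whence $\ov d_{g(\ii)} \equiv^\M \ov d_{g(\jj)}$ by indiscernibility of $W$, that is, $\ov b_\ii \equiv^\M \ov b_\jj$.

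The substantive step is local basedness of $Y$ on $X$. Given a finite set $\Delta$ of $L(\M)$-formulas and a finite tuple $\jj$ from $\A$, I would apply local basedness of $W$ on $Z$ to the tuple $g(\jj)$ from $\B$, obtaining a tuple $\ov k$ from $\B$ with $g(\jj) \sim_\B \ov k$ and $\ov d_{g(\jj)} \equiv^\M_\Delta \ov c_{\ov k} = \ov a_{f(\ov k)}$. The witness to try is $\ii := f(\ov k)$, which already gives $\ov b_\jj = \ov d_{g(\jj)} \equiv^\M_\Delta \ov a_\ii$; the remaining obligation is $\jj \sim_\A \ii$, and this is where I expect the real work to be. The idea is to use both defining properties of the semi-retraction together: since $f$ is qftp-respecting, $g(\jj) \sim_\B \ov k$ gives $(fg)(\jj) = f(g(\jj)) \sim_\A f(\ov k) = \ii$; and since $fg$ is an embedding, hence qftp-preserving, $\jj \sim_\A (fg)(\jj)$; transitivity of $\sim_\A$ then yields $\jj \sim_\A \ii$, completing local basedness.

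Everything but this last point is bookkeeping around the two transport maps $j \mapsto \ov a_{f(j)}$ and $i \mapsto \ov d_{g(i)}$ and the definitions of indiscernibility and local basedness; the one genuinely load-bearing computation is $\jj \sim_\A \ii$ in the local-basedness step, and it is the only place where the composition property of the semi-retraction (not merely the qftp-respecting property of $f$ and $g$) is needed, so that is the step I would check most carefully.
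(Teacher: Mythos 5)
Your proof is correct and is essentially the argument from \cite{sc21} that the paper cites without reproducing: transport $X$ to $\B$ along $f$, apply the modeling property there, pull back along $g$, and use the qftp-respecting property of $g$ for indiscernibility together with the qftp-respecting property of $f$ plus the composition (embedding) property for the key step $\jj \sim_\A \ii$ in local basedness. The cardinality bookkeeping via Observation \ref{csb} to ensure $\M$ is sufficiently saturated for $\B$ is also handled correctly.
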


\begin{cor}[\cite{sc21}]\label{transfer} Let $\A$ and $\B$ be locally finite ordered structures.  Suppose that $\A$ is a semi-retract of $\B$ and $\B$ has RP.  Then $\A$ has RP.
\end{cor}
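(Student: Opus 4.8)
The plan is to derive Corollary \ref{transfer} from the two main tools already in hand: Theorem \ref{thm1} (semi-retractions transfer the modeling property) and Theorem \ref{dictionary} (the Dictionary Theorem, equating the modeling property with RP of the age, for locally finite ordered structures). Since $\A$ and $\B$ are assumed to be locally finite ordered structures, both halves of the Dictionary Theorem apply to each of them, so the argument is essentially a diagram chase through three implications.

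\medskip

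First I would unpack the hypothesis that $\B$ has RP. By our convention (the definition preceding this corollary), ``$\B$ has RP'' means $\age(\B)$ has RP. Since $\B$ is a locally finite ordered structure, Theorem \ref{dictionary} applies and gives that $\B$-indexed indiscernible sets have the modeling property. Second, I would invoke Theorem \ref{thm1}: its hypotheses are exactly that $\A$ is a semi-retract of $\B$ (given) and that $\B$-indexed indiscernible sets have the modeling property (just established), and its conclusion is that $\A$-indexed indiscernible sets have the modeling property. Third, I would apply Theorem \ref{dictionary} again, this time to $\A$: since $\A$ is a locally finite ordered structure and $\A$-indexed indiscernible sets have the modeling property, the Dictionary Theorem yields that $\age(\A)$ has RP, i.e.\ $\A$ has RP. This completes the chain.

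\medskip

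There is essentially no main obstacle here, since Corollary \ref{transfer} is a formal consequence of results already proved; the only things to be careful about are the side conditions. One must check that ``locally finite ordered'' is the precise hypothesis under which Theorem \ref{dictionary} is stated (it is), and that Theorem \ref{thm1} has no hidden requirement beyond being a semi-retract and the modeling property of $\B$ (it does not — it is stated for arbitrary structures). One subtlety worth a remark is the saturation bookkeeping in the definition of the modeling property (Definition \ref{mpd} together with Remark \ref{rmk_saturation}), but this is internal to the statements of Theorems \ref{thm1} and \ref{dictionary} and need not be revisited. So the proof is a short three-line deduction: RP of $\age(\B)$ $\Rightarrow$ modeling property for $\B$ $\Rightarrow$ modeling property for $\A$ $\Rightarrow$ RP of $\age(\A)$, using Theorem \ref{dictionary}, then Theorem \ref{thm1}, then Theorem \ref{dictionary} once more.
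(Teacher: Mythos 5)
Your proof is correct and is exactly the paper's argument: the paper proves this corollary by citing Theorem \ref{thm1} and Theorem \ref{dictionary}, which is precisely the three-step chain you spell out. Nothing further is needed.
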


\begin{proof}
    This follows by Theorem \ref{thm1} and Theorem \ref{dictionary}.
\end{proof}

%%%%%%%%%%
\section{Semi-retractions, Reducts and Examples}\label{examples}

In this section, we consider basic examples of semi-retractions and give a characterization of semi-retractions in Theorem \ref{iff} under certain assumptions.

\begin{dfn} We say that $\A$ is a \emph{quantifier-free reduct} of $\B$ if $|\A|=|\B|=M$ and $\sim_\B$ refines $\sim_\A$ on $M$, i.e. for all finite same-length tuples $\ii, \jj$ from $|\A|$, $\ii \sim_\B \jj \Rightarrow \ii \sim_\A \jj$.
%We say that $A$ is \emph{quantifier-free $\aleph_0$-categorical
\end{dfn}

\begin{obs}\label{obs_reduct} Note that $\A$ is a quantifier-free reduct of $\B$ if and only if $|\A|=|\B|$ and the identity map $id : \B \raw \A$ is qftp-respecting.
\end{obs}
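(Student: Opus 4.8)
The plan is simply to unwind the two definitions and observe that they coincide. First note that the phrase ``the identity map $id:\B\raw\A$'' only makes sense once $\B$ and $\A$ share an underlying set, so the clause $|\A|=|\B|$ occurs on both sides of the claimed equivalence; I will therefore assume $|\A|=|\B|=:M$ throughout and show that under this hypothesis the statement ``$\sim_\B$ refines $\sim_\A$ on $M$'' is equivalent to the statement ``$id:\B\raw\A$ is qftp-respecting''. Observe also that $id$ is automatically an injection (indeed a bijection $M\raw M$), so the injectivity requirement in Definition \ref{def_semir} is vacuously satisfied and may be ignored.

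For the forward direction, suppose $\A$ is a quantifier-free reduct of $\B$. Let $\ii,\jj$ be finite, same-length tuples from $|\B|=M$ with $\ii\sim_\B\jj$. Since $\sim_\B$ refines $\sim_\A$ on $M$, we get $\ii\sim_\A\jj$, that is, $id(\ii)\sim_\A id(\jj)$; hence $id:\B\raw\A$ is qftp-respecting. Conversely, suppose $|\A|=|\B|$ and $id:\B\raw\A$ is qftp-respecting. Given finite, same-length tuples $\ii,\jj$ from $M$ with $\ii\sim_\B\jj$, the qftp-respecting property (read with $\B$ as the domain and $\A$ as the codomain of the map, i.e.\ with the roles of the two structures in Definition \ref{def_semir} interchanged) yields $id(\ii)\sim_\A id(\jj)$, i.e.\ $\ii\sim_\A\jj$. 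Thus $\sim_\B$ refines $\sim_\A$ on $M$ and $\A$ is a quantifier-free reduct of $\B$.

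The only point requiring any care --- and it is a matter of bookkeeping rather than a genuine obstacle --- is that ``a tuple from $\A$'' and ``a tuple from $\B$'' denote literally the same object once $|\A|=|\B|$, so that the universal quantifier over finite same-length tuples appearing in Definition \ref{def_semir} lines up exactly with the one appearing in the definition of a quantifier-free reduct. No structural Ramsey input, and indeed nothing beyond the definitions, is needed.
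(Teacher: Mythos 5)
Your proposal is correct: the observation is immediate from unwinding Definition \ref{def_semir} (applied with $\B$ as domain and $\A$ as codomain) against the definition of quantifier-free reduct, and the paper itself offers no proof precisely because the two conditions are syntactically identical once $|\A|=|\B|$ is assumed. Your bookkeeping about the identity map's injectivity and the alignment of the quantifiers is the whole content of the argument, so this matches the paper's (implicit) reasoning exactly.
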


%%%%%%%%
\begin{ex} For two structures $\A, \B$ such that $|\A|=|\B|$, $\A$ is a quantifier-free reduct of $\B$ if any of the following hold.
\begin{enumerate}
\item The signature $\LA$ is contained in the signature $\LB$.
%\item there exists a qftp-respecting injection $f: \B \raw \A$ that is surjective onto $\A$
\item Every atomic formula of $\A$ with no parameters is equivalent to a quantifier-free formula of $\B$ with no parameters.
%changed out basic relation
\end{enumerate}
\end{ex}

\begin{dfn}  We say that $\A$ and $\B$ are \emph{quantifier-free interdefinable} if $|\A|=|\B|$ and each of $\A$, $\B$ is a quantifier-free reduct of the other.
\end{dfn}

\begin{rmk}\label{interdef} If $|\A|=|\B|$, then $\A$ and $\B$ are quantifier-free interdefinable if and only if the pair of identity maps between $\A$ and $\B$ give a semi-retraction (in either order).
\end{rmk}

%\begin{dfn} We say that $A$ is $\ldots ?? \ldots$ if $A$ has finitely many quantifier-free $n$-types, for each $n \geq 1$.\end{dfn}

%\textit{What is a better term to use?  Dugald suggests this could be phrased using Caroline Terry's notion of..speed?}

%%%%%%%%
%\begin{rmk} $\A$ has finitely many quantifier-free $n$-types, for each $n \geq 1$ if any of the following:
%\begin{enumerate}
%\item $\A$ has a finite relational signature
%\item $\A$ (meaning, $\Th(\A)$) is $\aleph_0$-categorical
%\item conditions to get an $\aleph_0$-categorical \fr-limit of a \fr-class: uniformly locally finite in a finite signature
%\end{enumerate}
%\end{rmk}

%work on
%I think there is an expansion of this result to when tps in A are a union of finitely many types in B and there is enough crosscutting.
%[RIGID ONLY?]

\begin{rmk}\label{rmk_simple} In Theorem \ref{iff}, the assumption that every quantifier-free type realized in $\B$ is equivalent in $\B$ to an $\LB$-formula follows from the assumption that there are only finitely many quantifier-free $n$-types realized in $\B$ for any $n \geq 1$.  To see why this is true, enumerate the quantifier-free $n$-types realized in $\B$, $\{q_s : s < m\}$, and note that there exists a quantifier-free formula $\theta_s \in q_t \setminus q_s$ for all $s \neq t$, $s<m$.  Then, for any $t<m$, $\M \vDash \forall x \left( q_t(x) \leftrightarrow \bigwedge_{s \neq t} \theta_s(x) \right)$.

The assumption that there are only finitely many quantifier-free $n$-types realized in $\B$ for any $n \geq 1$ holds in any countable $\omega$-homogeneous structure $\B$ in a finite signature that is uniformly locally finite (meaning for each $n$, there is a finite bound on the size of substructures of $\B$ generated by $n$ elements).  A proof of this result can be read in Corollary 7.4.2 of \cite{ho93}.
\end{rmk}

\begin{thm}\label{iff}  Fix locally finite ordered structures $\A$ and $\B$ and suppose that
$\A$ is a quantifier-free reduct of $\B$ (thus, for some $\kappa$, $|\A|=|\B|=\kappa$.)  Assume that $\B$ is saturated (i.e., $\kappa$-saturated.)
%Supose that $\A$ has finitely many quantifier-free $n$-types, for each $n \geq 1$, and  
%
Suppose that every quantifier-free type realized in $\B$ is equivalent in $\B$ to an $\LB$-formula. 
%and that there are only finitely many quantifier-free $n$-types for any $n \geq 1$.
%
Suppose that $\B$ has RP.  Then, $\A$ is a semi-retract of $\B$ if and only if $\A$ has RP.
\end{thm}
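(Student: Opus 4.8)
The plan is to prove both directions of the biconditional, using the Dictionary Theorem (Theorem \ref{dictionary}) as the engine on the model-theory side and Corollary \ref{transfer} as the bridge in the forward direction.

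\textbf{Forward direction ($\A$ a semi-retract of $\B$ $\Rightarrow$ $\A$ has RP).} This is the easy half: it is exactly Corollary \ref{transfer}, since $\A$ and $\B$ are locally finite ordered structures, $\B$ has RP, and $\A$ is assumed to be a semi-retract of $\B$. No use is made here of saturation, of the quantifier-free-reduct hypothesis, or of the definability-of-types assumption.

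\textbf{Reverse direction ($\A$ has RP $\Rightarrow$ $\A$ a semi-retract of $\B$).} Here is where the hypotheses earn their keep. I need to produce qftp-respecting injections $\A \xrightarrow{g} \B \xrightarrow{f} \A$ with $fg$ an embedding. Since $\A$ is a quantifier-free reduct of $\B$ on the same underlying set $M$ of size $\kappa$, the identity map $\id : \B \to \A$ is qftp-respecting by Observation \ref{obs_reduct}; take this to be $f$. It remains to build a qftp-respecting injection $g : \A \to \B$ such that $g$ (which equals $fg = \id \circ g$ set-theoretically) is an $\LA$-embedding. The idea is to construct $g$ as the limit of a back-and-forth / transfinite construction using saturation of $\B$: enumerate $M = \{a_\alpha : \alpha < \kappa\}$ and build $g$ one point at a time, at stage $\alpha$ choosing $g(a_\alpha) \in |\B|$ realizing an appropriate quantifier-free type over $\{g(a_\beta) : \beta < \alpha\}$. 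To make $g$ qftp-respecting \emph{as a map from $\A$ to $\B$}, I want the $\B$-quantifier-free type of $g(\ov a)$ to depend only on the $\A$-quantifier-free type of $\ov a$; to make $g$ an $\LA$-embedding I need the $\A$-quantifier-free type of $g(\ov a)$ (computed via $f = \id$, hence equal to the $\A$-type of the underlying tuple in $M$) to equal that of $\ov a$. Both requirements are fueled by the same input: because $\A$ has RP and is a locally finite ordered structure, the Dictionary Theorem says $\A$-indexed indiscernible sets have the modeling property; applying this to the $\B$-valued parameter sequence indexed by $\A$ (i.e.\ to $X = (a_i \mid i \in \A)$ viewed as a sequence of singletons in a $\kappa$-saturated $\B$) produces an $\A$-indexed indiscernible set $Y = (b_i \mid i \in \A)$ in $\B$ locally based on $X$. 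The map $i \mapsto b_i$ is then qftp-respecting into $\B$ because $Y$ is $\A$-indiscernible, and I must check it is injective and that the composite with $f$ is an embedding — the latter using local basedness of $Y$ on $X$ together with the fact that $X$ itself "is" the identity and hence carries the full $\A$-structure, and the definability-of-quantifier-free-types hypothesis to ensure indiscernibility over the relevant finite fragments suffices to pin down the full quantifier-free $\A$-type. Here the assumption that every quantifier-free type realized in $\B$ is $\LB$-definable is what lets a finite-$\Delta$ local-basedness statement upgrade to control of complete quantifier-free $\B$-types, so that $g = (i \mapsto b_i)$ really is qftp-respecting and not merely $\Delta$-qftp-respecting for each finite $\Delta$.

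\textbf{Main obstacle.} The delicate point is verifying the composition property: that $fg = (i \mapsto b_i)$, read as a self-map of $\A$, is an $\LA$-embedding, i.e.\ that it is qftp-\emph{preserving}, not just qftp-respecting. Local basedness of $Y$ on $X$ only gives, for each finite $\Delta$ and each tuple $\ov\jmath$, some tuple $\ov\imath$ with $\ov\imath \sim_\A \ov\jmath$ and $b_{\ov\jmath} \equiv^\B_\Delta a_{\ov\imath}$; translating this through $f = \id$ and using that $a_i = i$ gives $b_{\ov\jmath} \equiv^\A_{\Delta'} \ov\jmath$ for a corresponding finite $\Delta'$ of $\LA$-formulas, and one must argue that ranging over all finite $\Delta'$ (together with $\A$ locally finite, so quantifier-free $\A$-types of a fixed tuple are determined by finitely much data) forces the full quantifier-free $\A$-type to be preserved. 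I expect this bookkeeping — coordinating the finite fragments on the $\B$-side with the finite fragments on the $\A$-side, and confirming injectivity of $g$ (which follows since distinct points of $\A$, being a locally finite \emph{ordered} structure, are separated by a quantifier-free formula, namely $<$, that $Y$-indiscernibility and local basedness preserve) — to be the real content of the proof, with everything else being an assembly of Corollary \ref{transfer}, Theorem \ref{dictionary}, Observation \ref{obs_reduct}, and Remark \ref{shelah}.
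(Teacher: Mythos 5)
Your proposal is correct and follows essentially the same route as the paper: the forward direction is Corollary \ref{transfer}, and the reverse direction uses Theorem \ref{dictionary} to get the modeling property, applies it to the identity enumeration $X=(a_i\mid i\in\A)$ of $\B$ (witnessing the locally based indiscernible $Y=(b_i\mid i\in\A)$ inside $\B$ by saturation and Remark \ref{shelah}), and takes $g:i\mapsto b_i$ with $f=\id$. The paper's verification of the composition property is exactly the single-step version of your sketch: choose $\Delta$ to contain the one $\LB$-formula equivalent to $\qftp^\B(g(\jj))$, obtain $\ii\sim_\A\jj$ with $g(\jj)\sim_\B\ii$, pass to $g(\jj)\sim_\A\ii$ via the reduct hypothesis, and conclude $g(\jj)\sim_\A\jj$ by transitivity — no limiting argument over all finite $\Delta'$ (and no appeal to local finiteness for determining types) is needed.
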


\begin{proof} The $\Rightarrow$ is from Corollary \ref{transfer}.  It remains to show $\Leftarrow$.  

Suppose that $\A$ has RP.  
%%%
Let $X = (a_i \mid i \in \A)$ enumerate $\B$ where $a_i = i$.  Since $\A$ has RP, by Theorem \ref{dictionary}, $\A$-indexed indiscernible sets have the modeling property.
%(see Definitions \ref{genind}, \ref{locbased}, \ref{mpd} and Proposition \ref{twodefs}).
%
Let $Y = (b_i \mid i \in \A)$ be an $\A$-indexed indiscernible set locally based on $X$.  By the saturation assumption, we know that we can witness $Y$ in $\B$, and not just in some elementary extension of $\B$ (see Remark \ref{shelah}).
Define $f: \A \raw \B$ to take $i \mapsto b_i$.
This is an injective map, since $a_i \neq a_j$ for all $i \neq j$ and $Y$ is $\A$-indexed indiscernible locally based on $X$.  It remains to show that
$$\A \xrightarrow{f} \B \xrightarrow{id} \A$$
\noindent is a semi-retraction.  

By Observation \ref{obs_reduct}, we already know that the identity map is qftp-respecting.
We must show the remaining properties from Definition \ref{def_semir}: (i) $\ii_1 \sim_\A \ii_2 \Rightarrow f(\ii_1) \sim_\B f(\ii_2)$ and  that (ii) $\jj \sim_\A id(f(\jj)) = f(\jj)$, for all tuples $\jj, \ii_1, \ii_2 \in {|\A|}^n$, for all $n<\omega$.  We have (i) as a direct consequence of $\A$-indexed indiscernibility.  We have (ii) from local basedness: for every finite subset of $\LB$-formulas $\Delta$, for any $\jj \in {|\A|}^n$, there is $\ii \sim_\A \jj$ from $\A$ such that $b_{\jj} \equiv^\B_{\Delta} a_{\ii}$.  In other words, $f(\jj) = b_{\jj} \equiv^\B_{\Delta} a_{\ii} = \ii$.
Given an arbitrary $\jj$, let $\Delta$ contain the formula equivalent to $\qftp^\B(f(\jj))$, and fix a corresponding $\ii \sim_\A \jj$.
Then, $f(\jj) \sim_\B \ii$, which implies that $f(\jj) \sim_\A \ii$, since $\A$ is a quantifier-free reduct of $\B$.  Since $f(\jj) \sim_\A \ii \sim_\A \jj$, we have that $f(\jj) \sim_\A \jj$, showing (ii).
\end{proof}

\begin{ex}\label{ordRadograph} Let $\B:=\R^<$ be the random ordered graph (the \fr~limit of finite ordered graphs) and $\A:= \B \uphp \{<\}$, so $\A$ is isomorphic to the rational linear order. It is easy to see that $\A$ is a quantifier-free reduct of $\B$, $\B$ is countably saturated, and both are locally finite and ordered.
%I am pretty sure
%$\Oo$ is in a finite relational signature (thus, finitely many quantifier-free $n$-types, for each $n$) and both have RP.
By Theorem \ref{iff}, $\A$ is a semi-retract of $\B$.

In order to find a semi-retraction $(g,f)$ between $\A$ and $\B$, one strategy is to take an indiscernible sequence $(g(i) \mid i \in \A)$ in $\R^<$, i.e. such that for all integers $n \geq 0$, for all $i_0<\ldots<i_{n-1}$ and $j_0<\ldots<j_{n-1}$ from $\A$, the map $g(i_k) \mapsto g(j_k)$ for all $k<n$ provides an isomorphism of ordered graphs in $\B$.
%$$(f(i_0),\ldots,f(i_{n-1})) \cong (f(j_0),\ldots,f(j_{n-1})) .$$
Such an indiscernible sequence is guaranteed to exist in $\B$, but in such a straightforward case, we can discern the existence of such a one directly: let $g(\A)$ be a copy of the countably infinite complete graph $K_\omega$ whose vertices form a dense linear order without endpoints under $<$ (guaranteed to exist in $\B$ by its saturation), where $g: \A \rightarrow \B$ is set up to preserve order.  Then $f$ can be taken to be merely the identity map, as follows
$$\A \xrightarrow{g} \B \xrightarrow{f=\id} \A .$$
This pair $(g,f)$ now witnesses that $\A$ is a semi-retract of $\R$.

Though both $\A,\B$ are well-known to have RP, this fact can be thought of as an instance of transfer by Corollary \ref{cor_fmla_free}.
\end{ex}

A similar argument can be made for the case $\A = (\Q,<)$ and $\B = (|\B|, E, \prec)$ where $\B$ is either of the ordered equivalence relation structures described in Example \ref{ex_RP} or Example \ref{ex_notRP}.

Next we look at three locally finite ordered structures known to have RP: the \textit{Shelah tree} $\I_\tr$, the \textit{strong tree} $\I_\str$ and the \textit{convexly ordered equivalence relation} $\I_\eq$.  The structure $\I_\eq$ has RP (Example \ref{ex_RP}) which fact is related to the usefulness of mutually indiscernible sequences in model theory (see Theorem III 7.12 (iii) in \cite{sh78} for an early example of mutually indiscernible sequences) even if RP is not explicitly mentioned.  An infinitary proof that shows that $\I_\tr$ has RP is given in the Appendix of \cite{sh78} and a finitary proof is given in Ch.2 $\S$2.2 Lemma 2 of \cite{nvt10}.  The fact that $\I_\str$ has RP is implicitly used in the proof of Thm III.7.11 in \cite{sh78} and used explicitly in the survey paper \cite{kks14}.  See \cite{sc15} for a more detailed discussion of this history.  

\begin{dfn}\label{trees}
\begin{itemize}
\item Define $\I_\tr$ to be the structure on $\Wo$ (finite sequences from $\omega$) in the signature
 $\{\unlhd, \wedge, \lx, \{P_n\}_{n \in \omega} \}$ where for all $\eta, \nu \in \Wo$, $\eta \unlhd \nu$ if and only if $\eta$ is an initial segment of $\nu$, $\wedge$ is the meet in the partial order $\unlhd$, $\lx$ is the lexicographic order on finite sequences, i.e. $\eta \lx \nu$ if and only if
$$\eta \unlhd \nu \textrm{~or~} \eta(  |\eta \wedge \nu| ) < \nu( |\eta \wedge \nu| ) ,$$

\noindent and $ \eta \in P_n $ if and only if $ |\eta| = n$, for all $n \in \omega$.
\item Define $\I_\str$ to be the structure on $\Wo$ in the signature $\{\unlhd, \wedge, \lx, \len \}$ where $\unlhd, \wedge, \lx$ are interpreted as in $\I_\tr$ and $\len$ is the preorder on $\mu, \nu \in \Wo$ defined by the lengths of the sequences:
$$\mu \len \nu \Leftrightarrow|\mu| < |\nu|$$
%i gives the column, j gives the row
\item Define $\I_\eq$ to be the structure on $\omega \times \omega$ in the signature $\{ E, \prec \}$ where for all $(i,j),(s,t) \in \omega \times \omega$, $(i,j)E(s,t) \Leftrightarrow i=s$ and 
$(i,j) \prec (s,t) \Leftrightarrow i<s \vee (i=s \wedge j<t)$.
\end{itemize}
\end{dfn}

We recall the following example of a semi-retraction that transfers RP from $\age(\I_\str)$ to $\age(\I_\eq)$.

\begin{prop}[\cite{sc21}]\label{treeprop} Let $\A$ be the structure on the underlying set $\omega \times \mathbb{Q}$ such that $\age(\A) = \age(\I_\eq)$ and each equivalence class in $\A$ is densely ordered by $\prec$.
Let $\B$ be the structure on the underlying set ${\mathbb{Q}}^{<\omega}$ such that $\age(\B) = \age(\I_\str)$ and the $\unlhd$-successors of any fixed node in $\B$ are densely ordered by $\lx$.  Then $\A$ is a semi-retract of $\B$.
\end{prop}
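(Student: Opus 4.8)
The plan is to exhibit an explicit semi-retraction $(g,f)$ and verify Definition~\ref{semir} directly. Take $\A=(\omega\times\Q,E,\prec)$ with $E$ ``same first coordinate'' and $\prec$ lexicographic --- the structure with $\age(\A)=\age(\I_\eq)$ and dense classes --- and $\B=(\Q^{<\omega},\unlhd,\wedge,\lx,\len)$ with the natural tree operations, the structure with $\age(\B)=\age(\I_\str)$ and densely ordered successor sets. Fix an order-embedding $\theta\colon(\A,\prec)\to(\Q,<)$, which exists since $(\A,\prec)$ is a countable linear order, and for each $n\ge 1$ an order-embedding $\iota_n\colon(\Q^n,\lx)\to(\Q,<)$; set $\iota_0(\langle\rangle)=0$. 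Define
\[
g(i,q):=\langle\,\theta(i,q),0,\ldots,0\,\rangle\ \ (\text{of length }i+1),\qquad f(\eta):=\bigl(|\eta|,\ \iota_{|\eta|}(\eta)\bigr).
\]
Both maps are injective: distinct points of $\A$ receive distinct first coordinates under $\theta$, hence distinct $g$-values; and $f$ separates nodes of equal length via $\iota_{|\eta|}$ and nodes of unequal length via $|\eta|$.

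The first task is to check that $g$ is qftp-respecting. The crucial feature is that any two distinct values $g(a),g(b)$ have distinct first coordinates, so $g(a)\wedge g(b)=\langle\rangle$; hence for every tuple $\ov{a}$ from $\A$ the generated substructure $\langle g(\ov{a})\rangle_\B$ is a ``star'' --- the root $\langle\rangle$ together with the distinct nodes $g(a_k)$, all strictly above the root and pairwise $\unlhd$-incomparable (the case in which all $a_k$ coincide being trivial). Its isomorphism type, with the tuple marked, is therefore determined by three things: which of the $g(a_k)$ coincide, their $\lx$-order, and their $\len$-order. By the choice of $\theta$ these are, respectively, the equality pattern of $\ov{a}$, the $\prec$-order of $\ov{a}$ (the $\lx$-comparison of two root-meeting nodes compares their first coordinates $\theta(a_k)$), and the comparison pattern of the class indices $i_k$; and this last pattern is itself determined by $\qftp^\A(\ov{a})$, because convexity of $\prec$ with respect to $E$ gives $i_k<i_l\iff a_k\prec a_l$ whenever $a_k,a_l$ lie in distinct classes. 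So $\ov{a}\sim_\A\ov{a}'$ forces $\langle g(\ov{a})\rangle_\B\cong\langle g(\ov{a}')\rangle_\B$ via $g(a_k)\mapsto g(a'_k)$, that is, $g(\ov{a})\sim_\B g(\ov{a}')$.

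Next, $f$ is qftp-respecting and $fg$ is an embedding. For $f$: the $\A$-quantifier-free type of $f(\ov{\eta})$ records only which $f(\eta_k)$ coincide, which lie in a common $E$-class, and their $\prec$-order; by the definition of $f$ these translate into $\eta_k=\eta_l$, $|\eta_k|=|\eta_l|$, and ``$|\eta_k|<|\eta_l|$, or $|\eta_k|=|\eta_l|$ with $\eta_k\lx\eta_l$'', each of which is visible in $\qftp^\B(\ov{\eta})$ through $\unlhd,\wedge,\len,\lx$. (It is harmless that $f$ sends a node to the concrete $E$-class indexed by its length: an $\A$-quantifier-free type is insensitive to which particular classes occur, only to the induced pattern.) Hence $\ov{\eta}\sim_\B\ov{\eta}'$ implies $f(\ov{\eta})\sim_\A f(\ov{\eta}')$. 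Finally $fg(i,q)=\bigl(i+1,\ \iota_{i+1}(g(i,q))\bigr)$: this preserves and reflects $E$ because $i+1=i'+1\iff i=i'$, and it is $\prec$-order-preserving because $i<i'\Rightarrow i+1<i'+1$ across classes, while inside a class $q<q'\Rightarrow\theta(i,q)<\theta(i,q')\Rightarrow g(i,q)\lx g(i,q')\Rightarrow\iota_{i+1}(g(i,q))<\iota_{i+1}(g(i,q'))$; an order-preserving injection of a linear order reflects the order as well, so $fg$ is an embedding and $(g,f)$ is the desired semi-retraction.

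The step I expect to be the real obstacle --- and the reason $g$ has this somewhat indirect shape --- is that the signature of $\I_\str$ (hence of $\B$) has no level predicates $P_n$, so $\B$ cannot detect the absolute height of a node, only relative heights via $\len$. In particular the naive encoding $g(i,q)=\langle i,q\rangle$ fails: a pair of length-$2$ nodes meeting at level $1$ and a pair meeting at level $0$ realize the same quantifier-free type in $\I_\str$, so no qftp-respecting $f$ could recover $E$ from such a $g$. Routing $E$ through node \emph{length}, which $\len$ does see comparatively, while forcing every $g$-value to meet at the root, is precisely what circumvents this; the work then consists in checking that this one choice is simultaneously qftp-respecting on both sides and makes $fg$ an embedding.
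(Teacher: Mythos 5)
Your proof is correct and follows essentially the same strategy as the paper's: both encode the $E$-class index of a point in the \emph{length} of its image node (the only way to make class membership visible to the relative-length predicate $\len$) and take $f$ to be an order-preserving level-to-class map, after which the qftp-respecting conditions and the embedding property of $fg$ are checked directly. The only real difference is the shape of $g$ --- the paper sends the $i$th class order-preservingly onto a set of $\lx$-densely ordered siblings above the all-zero sequence of length $2i$, so that same-class images meet deep in the tree, whereas your $g$ forces all images to meet at the root and carries the global $\prec$-order in the first coordinate via $\theta$, which makes every generated substructure a uniform ``star'' and streamlines the type computation; your verification is also far more detailed than the paper's, which simply defines the maps and asserts the conclusion.
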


\begin{proof}   
Given $i \in \omega$, by the $i$th level in $\B$, we mean all sequences in ${\mathbb{Q}}^{<\omega}$ of length $i$, and by the $i$th equivalence class in $\A$, we mean $\{(i,x) \mid x \in \mathbb{Q} \}$.

Let $\eta_i = {\underbrace{( 0, \ldots, 0 )}_{2i}}$.  Let $g$ take the $i$th equivalence class in $\A$ into $\{\eta_i^\smf ( j ) \mid j \in \mathbb{Q}_{>0}\}$ in a way that preserves the order.
Let $f: \B \raw \A$ be the map that takes the $i$th level in $\B$ into the $i$th equivalence class in $\A$ in a way that preserves the order.
Then $\A$ is a semi-retract of $\B$ via $(g,f)$.
\end{proof}

The following example is an application of Theorem \ref{iff}.

\begin{ex}\label{treereduct}  Since $\I_\tr$ is ordered and has RP, $\age(\I_\tr)$ is an amalgamation class. Let $\B$ be the \fr~limit of $\I_\tr$ and let $\A$ be the reduct of $\B$ to the signature of $\I_\str$.  Thus, $\age(\A) = \age(\I_\str)$ and so $\A$ has RP and $\B$ has RP.  Since $\B$ is $\omega$-homogeneous and uniformly locally finite in a finite signature, $\Th(\B)$ is $\aleph_0$-categorical and has quantifier elimination, as in Remark \ref{rmk_simple}, and so $\B$ is $\aleph_0$-saturated and every quantifier-free type in $\B$ is equivalent to an $\LB$-formula in $\B$.  Since the conditions in Theorem \ref{iff} are satisfied and $\A$ has RP, it must be that $\A$ is a semi-retract of $\B$.
\end{ex}

%moved

\section{(Counter)example}\label{counterexample}
This section is inspired by an example of Ma\v{s}ulovi\'c from \cite{mas18} where a pre-adjunction (see Section \ref{pread}) is constructed to transfer RP from the category of finite naturally ordered Boolean algebras to the category of finite linearly ordered graphs.  Both classes were previously known to have RP (see \cite{rone77} and \cite{abha78} for linearly ordered graphs, and \cite{graro71} for Boolean algebras), but the mechanism of transfer is quite interesting with a number of applications.

%Let $(\R,<)$ denote the countably infinite random ordered graph (the Fra\"iss\'e limit of finite linearly ordered graphs) 
Let $\R$ denote the countably infinite Random graph (the Fra\"iss\'e limit of finite graphs) and let $\mathcal{H}_n$ denote the countably infinite random $n$-regular hypergraph (the Fra\"iss\'e limit of finite $n$-regular hypergraphs). Let $\Ba$ denote the countable atomless Boolean algebra (the Fra\"iss\'e limit of finite Boolean algebras) with its induced partial order denoted by $<^{\Ba}$. Below we show that $\R$ and $\mathcal{H}_n$ are semi-retracts of $\Ba$. By Theorem \ref{thm_transembd}, this gives us upper bounds on Ramsey degrees for embeddings of finite graphs, respectively finite $n$-regular hypergraphs, in terms of Ramsey degrees for embeddings of finite Boolean algebras, and consequently on Ramsey degrees of substructures by Corollary \ref{degrees_bound}. See Remark \ref{rmkba} for a more detailed analysis. 

\subsection{Semi-retracts of the countable atomless Boolean algebra}\label{4.01}

\begin{thm}\label{4.2} The countable random graph $\R$ is a semi-retract of the countable atomless Boolean algebra $\Ba$.
\end{thm}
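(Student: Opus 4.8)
The plan is to realize the semi-retraction through the auxiliary symmetric irreflexive relation $R$ on $\Ba$ given by $(a,b)\in R$ iff $a\neq b$ and $a\wedge b\neq\mathbf{0}$, following the power-set-algebra construction of Ma\v{s}ulovi\'c in \cite{mas18}. First I would build an injection $g\colon\R\raw\Ba$ realizing $\R$ as a ``generic intersection pattern'': for distinct vertices, $g(v)\wedge g(w)\neq\mathbf{0}$ exactly when $v\sim w$ in $\R$; no three images have a common nonzero lower bound; and below each $g(v)$ there is a nonzero part disjoint from every other finite collection of images. Granting such a $g$, the structure $(\Ba,R)$ is a countable loopless graph, so by universality of $\R$ there is an induced-subgraph embedding $f\colon(\Ba,R)\hookrightarrow\R$; forgetting $R$, I claim $(g,f)$ is the desired semi-retraction once the conditions of Definition \ref{semir} are checked.

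Constructing $g$ is the crux, and the obstacle is that every vertex of $\R$ has infinite degree whereas $\Ba$ is not complete: one cannot let $g(v)$ be a private piece joined with one disjoint piece per incident edge, since that is an infinite join, which need not exist (indeed no nonzero element of $\Ba$ is a supremum of an infinite antichain of nonzero pieces, by compactness of the Stone space). I would sidestep this by recursion along an enumeration $v_0,v_1,\dots$ of $V(\R)$, recording the edge $\{v_m,v_n\}$ only at the later stage $n$. Maintain at stage $n$ a finite family of pairwise disjoint nonzero ``atoms'': the outside atom $\omega_n=\neg(g(v_0)\vee\cdots\vee g(v_{n-1}))$, a private atom $\rho_m\le g(v_m)$ disjoint from all $g(v_k)$ with $k<n$, $k\neq m$, and an edge atom $\beta_{m,m'}\le g(v_m)\wedge g(v_{m'})$ for each already-seen edge. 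To pass to stage $n+1$: split $\omega_n$ into two nonzero pieces $\rho_n\sqcup\omega_{n+1}$ (using atomlessness); for each $m<n$ with $\{v_m,v_n\}\in E$, split the current $\rho_m$ into nonzero pieces $\rho_m'\sqcup\beta_{m,n}$ and replace $\rho_m$ by $\rho_m'$; and set $g(v_n):=\rho_n\vee\bigvee\{\beta_{m,n}:m<n,\ \{v_m,v_n\}\in E\}$, a finite join. A direct check shows this preserves the disjointness invariants and keeps every atom nonzero at each finite stage, which suffices since being qftp-respecting concerns only finite tuples.

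For the verification I would use the standard fact that the quantifier-free type of a tuple $(a_0,\dots,a_{k-1})$ in a Boolean algebra is determined by which of the $2^k$ cells $\bigwedge_{i\in S}a_i\wedge\bigwedge_{i\notin S}\neg a_i$ vanish (the nonzero cells being exactly the atoms of the generated subalgebra). For $g(\ov{v})$ the construction makes the nonzero cells exactly the empty one (as $\bigvee_j g(v_{i_j})\neq\mathbf{1}$), the singletons (private atoms nonzero), and the pairs indexed by adjacent vertices, all larger cells vanishing; hence the Boolean-algebra quantifier-free type of $g(\ov{v})$ is a function of the graph quantifier-free type of $\ov{v}$, i.e.\ $g$ is qftp-respecting, and $g$ is injective since each $g(v_i)$ carries a nonzero private part not below $g(v_j)$ for $j\neq i$. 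Since $f$ is an injection with $(a,b)\in R\iff\{f(a),f(b)\}\in E(\R)$ for $a\neq b$, the graph quantifier-free type of $f(\ov{a})$ is determined pairwise by whether $a_i=a_j$ and whether $a_i\wedge a_j=\mathbf{0}$, hence by the Boolean-algebra quantifier-free type of $\ov{a}$, so $f$ is qftp-respecting too. Finally $fg\colon\R\raw\R$ is injective and, for $i\neq j$, $fg(v_i)\sim fg(v_j)$ iff $(g(v_i),g(v_j))\in R$ iff $g(v_i)\wedge g(v_j)\neq\mathbf{0}$ iff $v_i\sim v_j$; as $\R$ is loopless this makes $fg$ a graph embedding, hence qftp-preserving, so property \eqref{semi2} of Definition \ref{semir} holds and $(g,f)$ witnesses that $\R$ is a semi-retract of $\Ba$. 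The step I expect to be most delicate is the recursive construction of $g$ together with the bookkeeping that the private and edge atoms stay nonzero and pairwise disjoint at every finite stage.
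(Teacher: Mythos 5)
Your proof is correct and follows essentially the same route as the paper's: encode each edge by a designated nonzero meet drawn from an antichain structure, charging each edge to its later endpoint so that every join defining $g(v_n)$ is finite, obtain $f$ from the universality of $\R$ for countable graphs, and verify the qftp-respecting property of $g$ by determining which cells of the subalgebra generated by $g(\ov{v})$ vanish. The only quibble is your parenthetical claim that no nonzero element of $\Ba$ is a supremum of an infinite antichain of nonzero pieces — that is false (in the clopen algebra of $2^\omega$, the element $\mathbf{1}$ is the supremum of the pairwise disjoint nonempty sets $[0^n1]$); what compactness actually forbids is a clopen set being the set-theoretic \emph{union} of infinitely many disjoint nonempty clopen pieces, but since your construction never takes an infinite join this slip does not affect the argument.
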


\begin{proof} %Let $R$ on $\Ba$ be defined as in in the proof of Theorem \ref{4.1}.
We define a graph relation $R$ on $\Ba$ as Ma\v{s}ulovi\'c does on power set algebras of finite sets in \cite{mas18}: $(a,b)\in R$ if and only if  $a \neq b$ and $a\wedge b\neq \mathbf{0}$.
We will build a graph embedding $g:\R\raw (\Ba,R)$ that will be qftp-respecting when $\Ba$ is considered as a Boolean algebra. Since $\R$ is universal for countable graphs, there must be a graph embedding $f: (\Ba,R)\raw \R$. Clearly, $f$ will remain qftp-respecting with respect to $\Ba$ in the signature of Boolean algebras. Since $g$ and $f$ are graph embeddings, $f\circ g$ is a graph embedding from $\R$ to itself. Thus $(g,f)$ will be a semi-retraction between $\R$ and $\Ba$. 

Let $V=\{v_n:n\in\omega\}$ be an enumeration of vertices in $\R$. Recall that an \textit{antichain} in a Boolean algebra is a collection of non-zero elements such that every two distinct elements have zero meet. 
Pick an infinite antichain $B=\{b_n:n\in\omega\}$ in $\Ba$. %and split it into two disjoint sets $A=\{a_n:n\in \omega\}$ and  
For any $b_n\in B,$ let $\{b_n^i:i\in\omega\}$ be an antichain $\leq^{\Ba}$-below $b_n.$ %Let $b':V\to A$ be a bijection a 
Define $g:\R\to \Ba$ by \[g(v_n)=b_n\vee\bigvee_{i<n}b_i^n(\varepsilon),\] where 
\[
b_i^n(\varepsilon)=\begin{cases}
b_i^n \textrm{\ if } (v_i,v_n)\in E\\
\mathbf{0} \textrm{\ otherwise}.
\end{cases}
\]
We will show that $g$ is qftp-respecting:  
%Let $f'$ be any graph embedding from $(\Ba,R)$ to $\R$. Clearly, $f'$ is qftp-respecting if we pass to the reduct $\Ba$. By the same argument as in Theorem \ref{4.1}, $g'$ will also be qftp-preserving. Since $g'$ and $f'$ are graph embeddings, $f'\circ g'$ is an embedding of $\R$ into itself.  
Every quantifier-free type in $\R$ is determined by $E$ on pairs of points, and that needs to be reflected in the image of $g$. 
By the definition of $g$, if $(v_i,v_n)\in E$ and $i<n$ then $g(v_i)\wedge g(v_n)=b_i^n \neq\mathbf{0}$. If $(v_i,v_n)\notin E,$ then %$g(v_i)\leq^{\Ba} e(v_i)\vee b_{q_i}\vee \bigvee\{b(j,i):j<i\}$ and $g(v_n)\leq^{\Ba} e(v_n) \vee b_{q_n}\vee\bigvee\{b(j,n):j<n, j\neq i\}$. 
%%need to know, whatever it is, we won't hit the same pair $(q_i,q_j)$ twice.
%Thus $g(v_i)$ and $g(v_n)$ are below joins of disjoint finite antichains and so 
$g(v_i)\wedge g(v_n)=\mathbf{0}$. %Also, $g$ is order preserving, since $e$ is. 
 
%not quite graph embedding.
 %Since $f$ is a graph embedding, $f$ is qftp-respecting when we pass to the reduct $(\Ba,\prec)$. We now verify that so is $g$.
% Any quantifier free type $p$ in $(\R,<)$ is given by $E$ and $<$ on pairs of vertices. 
Any Boolean expression with variables $x_0,\ldots, x_{n-1}$  has its unique equivalent full disjunctive normal form, that is, a disjunction of clauses, where each clause consist of conjuction of literals $x_i$ or $\neg x_i$ so that each of the variables $x_i$ appears in every clause. We will show that an $n$-type of elements of $\Ba$ in the image of $g$ depends only on the type of the  graph they encode:
%I added "ordered"
 \begin{enumerate} 
\item\label{ba1} For every triple of distinct numbers $i,j,k$: $g(v_i)\wedge g(v_j)\wedge g(v_k)=\textbf{0}$.
 \item Whenever $i< j,$ we have that $g(v_i)\wedge g(v_j)=\begin{cases}
 \mathbf{0} \text{\ \ if } (v_i,v_j)\notin E\\
 b_i^j \text{\ \ if } (v_i,v_j)\in E.
 \end{cases}$
 \end{enumerate}
 It follows that every non-zero clause has at most two positive literals. Further,
 \begin{enumerate}
 \item[(3)] For any $i\neq j$, \[g(v_i)\wedge \neg g(v_j)=g(v_i)\wedge \neg(g(v_i)\wedge g(v_j))=
 \begin{cases}
 g(v_i)  \text{\ if } (v_i,v_j)\notin E\\
 g(v_i) \wedge \neg b_i^j \text{\ if } (v_i,v_j)\in E \text{\ and \ } i<j \\
 g(v_i) \wedge \neg b_j^i  \text{\ if } (v_i,v_j)\in E \text{\ and } i>j.
 \end{cases}
 \]
 \item[(4)] For $i\neq j,$ $\neg g(v_i)\wedge \neg g(v_j)=\neg (g(v_i)\vee g(v_j))$ by de Morgan's law. 
\end{enumerate}
Let $v_{i_0}, v_{i_2},\ldots, v_{i_{n-1}}$ be $n$ distinct vertices of $\R$. We will consider Boolean expression in $g(v_{i_0}),\ldots, g(v_{i_{n-1}})$ in full disjunctive normal form. From the analysis above, we have three cases.
\begin{enumerate}
	\item[(i)] If a clause contains no positive literal, then it is equal to $\neg (\bigvee_{k=0}^{n-1}g(v_{i_k}))$, which is a positive element disjoint from any $g(v_{i_k}).$ 
 	\item[(ii)] If a clause has one positive literal $g(v_{i_k})$ and all other $n-1$ negative, it equals to 
 	%\begin{eqnarray*}
 	%g(v_{i_k})\wedge \neg(\bigvee \{b(i_j,i_k): (v_{i_k},v_{i_j})\in E, i_j \neq i_k \})\\
 		%=
   \[g(v_{i_k})\wedge \neg(\bigvee \{g(v_{i_k})\wedge g(v_{i_j}):(v_{i_k},v_{i_j})\in E\}),\]
 	%\end{eqnarray*}
  which is $\leq^{\Ba} g(v_{i_k})$ and disjoint from every $g(v_{i_k})\wedge g(v_{i_j})$ for $j\neq k.$
 	\item[(iii)] If a clause contains two positive literals $g(v_{i_k}), g(v_{i_l})$,  then it is equal to $g(v_{i_k})\wedge  g(v_{i_l}).$
 
 \end{enumerate}
Since $ \{b_i^j: i<j<\omega\}$ is an antichain, we get that any two distinct clauses have empty meet.  Therefore, it is not possible to obtain one clause below a join of the others, unless it equals to zero. It means that there are no non-trivial equations with quantifier free formulas about an $n$-tuple in the image of $g$ other than comparison with $\mathbf{0}$. Since clauses in items (i) and (ii) are never equal to $\mathbf{0},$ the entire type is decided by the clauses as in (iii), which corresponds exactly to the type of the finite graph in the preimage.
\end{proof}

\begin{rmk}\label{rmkba} We say that $A$ is a Ramsey object in $\K$ if for all $B \in \K$, $(A,B)$ is a Ramsey duo for $\K$.
The class of finite Boolean algebras has RP (as in Example \ref{ex_RP}) while only complete or empty graphs are Ramsey objects in the class of finite graphs, as shown in \cite{rone75}. Therefore,  to secure transfer of RP by semi-retractions, the requirement %that $\age(\Ba)$ be rigid is essential in
on rigidity of $\B$ in Corollary \ref{cor_fmla_free} or on relational signature in Theorem \ref{fmla_free_relational} cannot be completely removed. 
The reason in the example of Theorem \ref{4.2} is that there can be two distinct copies $\Gamma_1,\Gamma_2$ of the same finite graph in $\R$ such that $f^{-1}(\Gamma_1)$ and $f^{-1}(\Gamma_2)$ have the same quantifier-free type and generate the same  Boolean subalgebra $A$ of  $\Ba$ (thanks to function symbols in the signature of Boolean algebras). This would not have been possible if $\age(\Ba)$ were rigid, since a partial isomorphism from $f^{-1}(\Gamma_1)$ to $f^{-1}(\Gamma_2)$ given by having the same quantifier-free type would extend to a non-trivial automorphism of $A$.

\begin{comment}
This is not in contradiction with Theorems \ref{masresult},\ref{preadjage}, and \ref{thm_transembd}, and Corollary \ref{cor_fmla_free}, because
%the fact that semi-retractions define pre-adjunctions (see Section \ref{category}), which transfer RP, 
%is that the respective pre-adjunction is considered with all embeddings while 
the transfer principle of the RP in Corollary \ref{cor_fmla_free} is with respect to substructures 
whereas the transfer principle of the RP in Theorems \ref{masresult} and \ref{thm_transembd} is with respect to embeddings.
\end{comment}

 Theorem \ref{thm_transembd} applied to $(g,f)$ shows that the finite Ramsey degree of  a finite graph $\Gamma$ in the class of finite graphs with embeddings is bounded above by the Ramsey degree of the Boolean subalgebra of $\Ba$ generated by $g(\Gamma)$ in the class of finite Boolean algebras with embeddings (which is equal to $n!$, where $n$ is the number of atoms in $\left<g(\Gamma)\right>$).  It can be derived from  results in \cite{abha78} and \cite{rone77} that the exact Ramsey degree for embeddings of a graph on $m$ vertices is $m!$. Thus our estimate is optimal only in the case of discrete graphs.
\end{rmk}

\begin{qu}
The example of a semi-retraction on Theorem \ref{4.2} was inspired by an example in \cite{mas18}. However, Ma\v{s}ulovi\'c's example is about the classes of finite ordered graphs and finite naturally ordered Boolean algebras, rather than their unordered versions presented here. Is the random ordered graph (the Fra\"iss\'e limit of finite linearly ordered graphs) a semi-retract of the countable atomless Boolean algebra with a generic normal order (the Fra\"iss\'e limit of finite naturally ordered Boolean algebras)?  
\end{qu}

Let $\mathcal{H}_n=(V,E)$ be the countable random $n$-regular hypergraph. Ma\v{s}ulovi\'c commented in his paper \cite{mas18} that the method he used to witness the Ramsey property of linearly ordered graphs by that of finite naturally ordered Boolean algebras did not generalize to higher arity hypergraphs. We were able to overcome this obstacle in the unordered case. As in graphs, one needs to ensure that the $g$-part of the semi-retraction maps to a portion of the Boolean algebra, where every type is determined only by the hypergraph relation. This can be achieved by a construction similar to that in Theorem \ref{4.2}, while making sure that all $k$-tuples for $k<n$ have the same type. %We present only the unordered version here. 

\begin{thm}\label{4.3} For any integer $n\geq 2$, $\mathcal{H}_n$ is a semi-retract of $\Ba$.
\end{thm}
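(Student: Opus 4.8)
The plan is to adapt the construction in the proof of Theorem \ref{4.2} to accommodate an $n$-ary hyperedge relation while taking care that all $k$-tuples for $k<n$ receive the same quantifier-free type (since in an $n$-regular hypergraph the only structure on sets of size less than $n$ is their cardinality). First I would fix an enumeration $V=\{v_m : m\in\omega\}$ of the vertices of $\mathcal{H}_n$ and pick an infinite antichain $B=\{b_m : m\in\omega\}$ in $\Ba$. Below each $b_m$ I would fix a further antichain indexed by the finite subsets of $\{0,1,\dots,m-1\}$ of size $n-1$; write these as $b_m^{s}$ for $s\in[\{0,\dots,m-1\}]^{n-1}$. Then define
\[
g(v_m)=b_m\vee\bigvee\{\, b_m^{s} : s\in[\{0,\dots,m-1\}]^{n-1}\ \text{and}\ (v_i)_{i\in s}\smf v_m\in E \,\},
\]
i.e.\ $g(v_m)$ records, via the disjoint pieces below $b_m$, exactly which hyperedges containing $v_m$ and otherwise only earlier vertices are present. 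As in Theorem \ref{4.2}, since $\mathcal{H}_n$ is universal for countable $n$-regular hypergraphs, once $g$ is shown to be a qftp-respecting (indeed hyperedge-preserving) injection $\mathcal{H}_n\to(\Ba,E_n)$ for the induced relation $E_n$, universality gives a hyperedge-embedding $f:(\Ba,E_n)\to\mathcal{H}_n$, and $(g,f)$ is the desired semi-retraction because $f\circ g$ is then a hypergraph self-embedding of $\mathcal{H}_n$, hence qftp-preserving.

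Next I would verify that $g$ detects hyperedges correctly: for distinct $v_{m_0},\dots,v_{m_{n-1}}$ with $m_0<\dots<m_{n-1}$, one has $g(v_{m_0})\wedge\cdots\wedge g(v_{m_{n-1}})\neq\mathbf{0}$ if and only if $\{v_{m_0},\dots,v_{m_{n-1}}\}\in E$; the nonzero witness, when the edge is present, is the piece $b_{m_{n-1}}^{\{m_0,\dots,m_{n-2}\}}$ sitting below $b_{m_{n-1}}$. The key structural fact, mirroring item \eqref{ba1} in Theorem \ref{4.2}, is that \emph{any} meet of $n+1$ or more distinct $g(v_m)$'s is $\mathbf{0}$, because the labels $b_m^{s}$ involved in different $g(v_m)$'s are pairwise disjoint unless they encode the very same $n$-element edge. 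Then, exactly as in Theorem \ref{4.2}, I would pass to full disjunctive normal form: a clause with at most $n-1$ positive literals is a ``generic'' piece whose nonzero-ness is automatic (the meet of fewer than $n$ of the $g(v_{m})$'s, cut down by complements, never vanishes), a clause with exactly $n$ positive literals is $\mathbf{0}$ or the single atom-like piece $b_{m_{n-1}}^{s}$ according to whether the corresponding $n$-set is a hyperedge, and a clause with $>n$ positive literals is $\mathbf{0}$. Since $\{b_m^{s}\}$ together with the $b_m$'s form an antichain, distinct clauses have empty meet, so there are no nontrivial Boolean relations among an $n$-tuple from $\ran g$ beyond ``$=\mathbf{0}$'' — hence the quantifier-free type in $\Ba$ of $(g(v_{m_0}),\dots,g(v_{m_{k-1}}))$ is determined by which $n$-subsets are hyperedges, i.e.\ by $\qftp^{\mathcal{H}_n}(v_{m_0},\dots,v_{m_{k-1}})$, which is precisely qftp-respecting.

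The main obstacle is the last clause of the statement's motivating remark — ensuring that \emph{all $k$-tuples for $k<n$ get the same type} — which I would handle by observing that for $k<n$ no clause in the DNF of a $k$-tuple from $\ran g$ ever has $n$ positive literals, so the only ``special'' behavior (vanishing of an $n$-fold meet) cannot be triggered; every clause is a generic nonzero piece of the fixed shape, so the full quantifier-free type of any $k$ elements of $\ran g$ is the same for all choices, matching the fact that in $\mathcal{H}_n$ all $k$-element sets look alike. Concretely this requires checking that the pieces $b_m^{s}$ indexed by $(n-1)$-sets never accidentally coincide or nest across different $v_m$'s — which is guaranteed by choosing the antichains below the $b_m$'s independently — and that complementation does not introduce hidden equalities, which follows from the antichain property just as in Theorem \ref{4.2}. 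Once this bookkeeping is in place the proof is a routine transcription of the $n=2$ argument; the care needed is entirely in setting up the index sets for the sub-antichains so that an edge $\{v_{m_0},\dots,v_{m_{n-1}}\}$ is ``seen'' from exactly one vertex, namely the $\len$-largest, and in confirming that no lower-arity meet collapses.
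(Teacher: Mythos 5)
Your overall strategy (encode hyperedges by antichain witnesses, get $f$ from universality of $\mathcal{H}_n$, argue via disjunctive normal form) is the paper's strategy, but the construction of $g$ as written does not work, and it fails at exactly the points where the paper's proof has to do something more delicate. First, you place every witness $b_m^{s}$ \emph{below} $b_m$ and set $g(v_m)=b_m\vee\bigvee\{b_m^{s}:\dots\}$; since $b_m^{s}\leq^{\Ba}b_m$, this join collapses to $g(v_m)=b_m$, so the images are pairwise disjoint and no edge information is recorded. Even granting that $b_{m_{n-1}}^{\{m_0,\dots,m_{n-2}\}}$ is meant as a genuine new piece, it appears only in $g(v_{m_{n-1}})$ and lies below $b_{m_{n-1}}$, which is disjoint from everything in $g(v_{m_j})$ for $j<n-1$; hence it is \emph{not} below $g(v_{m_0})\wedge\cdots\wedge g(v_{m_{n-1}})$, and your claimed nonzero witness for a present hyperedge fails. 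In the $n=2$ case of Theorem \ref{4.2} the witness $b_i^n$ is explicitly joined into $g(v_n)$ \emph{and} sits below $b_i\leq g(v_i)$, so both endpoints see it by one of two different mechanisms; for $n\geq 3$ there are $n-2$ further vertices that see the witness by neither mechanism, which is precisely where the ``routine transcription'' breaks. One must also keep each $g(v_m)$ a finite join even though $v_m$ lies in infinitely many hyperedges (infinite joins do not exist in $\Ba$).

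Second, your handling of the $k<n$ types rests on the assertion that the meet of fewer than $n$ of the $g(v_m)$'s never vanishes, but nothing in your construction arranges this: with only the $b_m$'s and edge witnesses present, a $k$-fold meet for $2\leq k<n$ is $\mathbf{0}$ on the literal reading, and on any repaired reading it depends on which hyperedges contain all $k$ vertices, so $g$ would not be qftp-respecting (in $\mathcal{H}_n$ all $k$-tuples of distinct vertices with $k<n$ realize the same quantifier-free type). The paper's proof addresses both problems at once by introducing, for each $1\leq k\leq n$, an antichain $B_k=\{b_{\ov{\imath}}:\ov{\imath}\in\omega^{[k]}\}$ indexed by strictly increasing $k$-sequences, nested so that $b_{\ov{\jmath}}<^{\Ba}b_{\ov{\imath}}$ whenever $\ov{\imath}$ is an initial segment of $\ov{\jmath}$, and by putting into $g(v_l)$ \emph{unconditionally} every $b_{\ov{\imath}}$ with $\ov{\imath}$ of length $<n$ ending at $l$, plus those of length exactly $n$ ending at $l$ that code a hyperedge. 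The nesting makes $b_{(m_0,\dots,m_{k-1})}$ a common lower bound of $g(v_{m_0}),\dots,g(v_{m_{k-1}})$ for every $k\leq n$, each join is finite because all entries of $\ov{\imath}$ are at most $l$, and the unconditional sub-$n$-ary pieces force all $k$-types to coincide for $k<n$ so that the edge relation is confined to arity $n$. These pieces have no analogue in your construction, and without them the argument does not go through.
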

\begin{proof}
%\subsection{Embedding $n$-regular hypergraphs into countable atomless Booelan algebra}\label{4.3}
We define an $n$-ary hypergraph relation $H_n$ on $\Ba$ by $(b_0,\ldots,b_{n-1})\in H_n$ iff 

\noindent $\bigwedge_{i<j<n} b_i \neq b_j$ and $\bigwedge_{l=0}^{n-1} b_l\neq\textbf{0}$. %$\bigwedge_s\in S b_s$ for every $S\subset [n].$ 
We will define an embedding $g:\mathcal{H}_n\to (\Ba, H_n)$ and by universality of $\mathcal{H}_n,$ there is an embedding $f:(\Ba,H_n)\to\mathcal{H}_n$.   Since $H_n$ is quantifier-free definable in $\B$, taking  the reduct $\Ba$ of $(\Ba, H_n),$ $(g,f)$ will be a semi-retraction between $\Ba$ and $\mathcal{H}_n$, as in the case of the random graph. However, we need to be more careful when defining $g$ than in the graph case -- in $(\Ba,H_n)$ there are different  $<n$-types, which is not true in $\mathcal{H}_n$

%The simplest idea to define the hypergraph relation on $\B$ by $(a_0,\ldots,a_{n-1})\in H_n$ iff $\bigwedge_{l=0}^{n-1}\neq\textbf{0}$ does not work, since all $<n$-tuples in $\mathcal{H}_n$ have the same type, but that does 

For every $1\leq k\leq n,$ let $B_k=\{b_{\ov{\imath}}: \ov{\imath}\in\omega^{[k]}\}$ be an antichain in $\Ba$, where $\omega^{[k]}$ denotes all strictly increasing sequences in $\omega$ of length $k$. We further require that if $\ov{\imath}$ is an initial segment of $\ov{\jmath},$ then $b_{\ov{\jmath}} <^{\Ba} b_{\ov{\imath}}.$ Let $\{v_l:l\in \omega\}$ be an enumeration of vertices of $\mathcal{H}_n$ and let $E$ denote the set of hyperedges. We define $g:\mathcal{H}_n\to (\Ba,H_n)$ by 
\[
g(v_l)=\bigvee_{\ov{\imath}(|\ov{\imath}|-1)=l, |\ov{\imath}|<n} b_{\ov{\imath}}\vee \bigvee_{\ov{\imath}(|\ov{\imath}|-1)=l,|\ov{\imath}|=n, (v_{i(0)},\ldots,v_{i(n-1)})\in E} b_{\ov{\imath}}.
\]
Since each $B_k$ is an antichain, we have that for any $\ov{\imath}\in \omega^{[k]}$
\[
\bigwedge_{l=0}^{k-1} g(v_{i(l)})\neq\mathbf{0} \text{\ iff } k<n, \text{\ or } k=n \text{ \ and } (v_{i(0)},\ldots,v_{i(n-1)})\in H_n.
\]

We have that $g$ and $f$ are hypergraph embeddings and thus $fg$ is an embedding. As in the proof of Theorem \ref{4.2}, one can show that every $N$-type for $N\geq n$ in $g(\mathcal{H}_n)$ is determined by $n$-types. Here it is crucial that every two $k$-tuples for $k<n$ have the same type and therefore $n$-types are determined solely by the hypergraph relation.
We can conlude that $g$ considered as a map $\mathcal{H}_n\to \Ba$ is qftp-respecting. 
\end{proof}

\begin{rmk}
As in \ref{rmkba},
by Theorem \ref{thm_transembd}, the Ramsey degree for embeddings of a finite hypergraph $H$ in the class of finite hypergraphs is bounded above by the Ramsey degree of $\left<g(H)\right>$ in the class of finite Boolean algebras with embeddings. %{\color{red} compare to what is known in the literature, before offering a bound on the Ramsey degree for embeddings, e.g. $n!$.}
\end{rmk}

%VERIFY that this is a hypergraph embedding and that it remains a quantifier free type respecting.

%%%%%%%%%%%%%%%%%%%%%%%%%%%%%%%%%%%%%%%%%%%%%%%%%%%%%%%%%%%%%%%%%%%%%%
\section{Formula-free approach to transferring the RP}\label{fmla}
%%%%%%%%%%%%%%%%%%%%%%%%%%%%%%%%%%%%%%%%%%%%%%%%%%%%%%%%%%%%%NEW START
%In Section \ref{prelim}, we introduced the Ramsey property and finite Ramsey degrees for substructures and embeddings. 
In prior work \cite{sc21}, Corollary \ref{transfer} demonstrated how semi-retractions transfer RP when $\A$ and $\B$ are both ordered and locally finite.
In this section we show in Corollary \ref{cor_fmla_free} that some of the assumptions in Corollary \ref{transfer} may be dropped. Namely, we only need rigidity of the structures in $\age(\B)$ and we only need local finiteness of $\A$.  In fact, the transfer can be done locally for a pair of finite structures $(A,B)$ in $\A$, as demonstrated in Corollary \ref{cor_fmla_free_duo}. Moreover, if the signatures of $\A$ and $\B$ are relational, we no longer need rigidity in $\age(\B)$ as shown in Theorem  \ref{fmla_free_relational}.

In Remark \ref{rmkba}, we noted that the rigidity assumption in Corollary \ref{cor_fmla_free_duo} or  the relational language in Theorem \ref{fmla_free_relational} cannot be removed.  However, if we consider the Ramsey property for embeddings rather than for substructures, which is the natural framework in dynamical applications, we obtain a transfer principle for finite Ramsey degrees for embeddings (Theorem \ref{thm_transembd}) that does not assume more about $\A$ and $\B$ other than that $\A$ is a locally finite semi-retract of $\B$. 
We include examples that demonstrate the necessity of the assumptions in Corollary \ref{cor_fmla_free} and Corollary \ref{cor_fmla_free_duo}.
%In Theorem \ref{thm_transembd}, we show that finite Ramsey degrees for embeddings transfer to locally finite semi-retracts. 
%and we give an example to demonstrate that local finiteness cannot be removed.  We then discuss how the other Ramsey notions behave under semi-retractions. 

\begin{thm}\label{thm_transembd}
	Suppose that $\A$ is a locally finite semi-retract of $\B$ via $(g,f)$. Let $A \in\age(\A)$. Suppose that $\left<g(A)\right>_{\B}$ has Ramsey degree $d$ for embeddings in $\age(\B)$. Then $A$ has Ramsey degree $\leq d$ for embeddings in $\age(\A)$.
\end{thm}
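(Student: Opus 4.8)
The plan is, for a fixed $B\in\age(\A)$ and $r\geq 2$, to transport the colouring problem across the semi-retraction into $\age(\B)$, solve it there using the hypothesis on $A_*:=\la g(A)\ra_{\B}$, and pull the solution back to a witness $C\in\age(\A)$ for $d_e(A,\age(\A))\leq d$. Since $\A$ is locally finite, $A$ and $B$ are finite; I would fix tuples $\ov{a},\ov{b}$ that \emph{enumerate} the universes of $A$ and $B$, so that $A=\la\ov{a}\ra_{\A}$, $B=\la\ov{b}\ra_{\A}$, and embeddings of $A$ (resp.\ $B$) into any structure $X$ correspond exactly to realizations in $X$ of the complete quantifier-free type of $\ov{a}$ (resp.\ $\ov{b}$). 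Put $A_*=\la g(\ov{a})\ra_{\B}$ and $B_*=\la g(\ov{b})\ra_{\B}$ in $\age(\B)$; since $A_*$ has Ramsey degree $d$ for embeddings in $\age(\B)$, choose $D\in\age(\B)$ (finite, which holds whenever $\B$ is locally finite, as in the intended applications) with $D\xlongrightarrow{e}(B_*)^{A_*}_{r,d}$, and set $C:=\la f(|D|)\ra_{\A}\in\age(\A)$.

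The engine of the argument is a single observation, the only place the composition property is used: if $\ov{t}$ is a tuple from $D$ with $\ov{t}\sim_{\B}g(\ov{a})$, then $f(\ov{t})\sim_{\A}f(g(\ov{a}))=(fg)(\ov{a})\sim_{\A}\ov{a}$ — the first step because $f$ is qftp-respecting, the last because $fg$ is qftp-preserving — and since $f(\ov{t})$ lies in $C$ it names an embedding $A\to C$; likewise with $\ov{b}$ in place of $\ov{a}$. Using this I would define $\Phi\colon\emb(A_*,D)\to\emb(A,C)$, sending $e$ to the embedding $\ov{a}\mapsto f\big(e(g(\ov{a}))\big)$, and observe that $g$ induces $g_{\#}\colon\emb(A,B)\to\emb(A_*,B_*)$ sending $\psi$ to the embedding $g(\ov{a})\mapsto g(\psi(\ov{a}))$ (well-defined since $\psi(\ov{a})\sim_{\A}\ov{a}$ forces $g(\psi(\ov{a}))\sim_{\B}g(\ov{a})$, and $g(\psi(\ov{a}))$ lands in $B_*$ because $\ov{b}$ enumerates $B$).

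Now given an arbitrary colouring $c\colon\emb(A,C)\to r$, pull it back to $c':=c\circ\Phi\colon\emb(A_*,D)\to r$ and apply the partition relation for $D$ to get $h_*\in\emb(B_*,D)$ with $|c'(h_*\circ\emb(A_*,B_*))|\leq d$; let $h\in\emb(B,C)$ be the embedding $\ov{b}\mapsto f\big(h_*(g(\ov{b}))\big)$, available by the engine observation. The point to verify is the identity
$$h\circ\psi=\Phi\big(h_*\circ g_{\#}(\psi)\big)\qquad\text{for every }\psi\in\emb(A,B),$$
which follows by evaluating both sides on $\ov{a}$: since $\ov{b}$ enumerates $B$, $\psi(\ov{a})$ is the subtuple of $\ov{b}$ picked out by some index function $\sigma$, and $g,h_*,f$ (hence $h$) are functions, so both sides commute with passing to $\sigma$-subtuples and send $\ov{a}$ to $f\big(h_*(g(\psi(\ov{a})))\big)$. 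Hence $c(h\circ\psi)=c'(h_*\circ g_{\#}(\psi))\in c'(h_*\circ\emb(A_*,B_*))$ for every $\psi$, so $c(h\circ\emb(A,B))$ has at most $d$ elements, which is what is required.

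I expect the main obstacle to be purely organizational: because $g$ and $f$ are merely quantifier-free type-respecting and not embeddings, one may not treat $g(B)$ or $f(D)$ as substructures, nor assume $g$ carries $\la\ov{b}\ra$ to $\la g(\ov{b})\ra$, so one must be scrupulous about which finite sets generate which structures and check that $\Phi$ and $g_{\#}$ really land in the claimed hom-sets — this is exactly why $\ov{a},\ov{b}$ are taken to be enumerations rather than arbitrary generating tuples. It is worth noting that rigidity of the members of $\age(\B)$ is never used, in contrast with Corollary \ref{cor_fmla_free}: working with embeddings throughout, two distinct copies always carry distinct data, so no collision of the kind described in Remark \ref{rmkba} can occur.
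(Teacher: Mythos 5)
Your argument is, at bottom, the paper's own proof: the induced colouring obtained by composing with $f$, the map $g_{\#}$ (the paper's $j'$, ``the unique embedding extending $g((fg)^{-1}(j))$''), and the key identity $h\circ\psi=\Phi\bigl(h_*\circ g_{\#}(\psi)\bigr)$ are exactly the computation carried out there, and each of these steps checks out, including your observation that rigidity plays no role. The one genuine loose end is the parenthetical you try to wave away: the theorem does not assume $\B$ is locally finite, so the Ramsey witness $D\in\age(\B)$ is only finitely \emph{generated} and may be infinite; in that case $f(|D|)$ is an infinite subset of the locally finite structure $\A$, so $C:=\la f(|D|)\ra_{\A}$ is not finitely generated and is therefore not a legitimate witness in $\age(\A)$. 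The paper avoids this by never descending to a finite $D$: it colours $\emb(A',\B)$ directly (a Ramsey degree in $\age(\B)$ trivially yields one in $\B$ itself), finds $h\in\emb(B',\B)$, pulls back to $k\in\emb(B,\A)$, and so establishes $d_e(A,\A)\le d$; Lemma \ref{compactness_emb_degrees}, which requires only local finiteness of $\A$, then converts this into $d_e(A,\age(\A))\le d$. Your write-up closes the same way with essentially no change: your argument, run with $D$ possibly infinite, still proves $\A\xlongrightarrow{e}(B)^A_{r,d}$ because your $C$ sits inside $\A$, and one final appeal to Lemma \ref{compactness_emb_degrees} finishes the job. Add that appeal (or an explicit local finiteness hypothesis on $\B$) and the proof is complete.
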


\begin{proof} 
Let $A_0, B_0$ be finite substructures of $\A$ and let $A=fg(A_0)\cong A_0$ and $B=fg(B_0)\cong B_0$. Let $c:\emb(A,\A)\to\{0,1,\ldots, r-1\}$  be a coloring.  We will make use of Lemma \ref{compactness_emb_degrees} in proving this result.
	
Let $A'=\left<g(A_0)\right>_{\B}$ and $B'=\left<g(B_0)\right>_{\B}$. Given any $e \in \emb(A',B')$, the domain of $f(e \uphp g(A_0))$ is $fg(A_0)=A$. For any $\ov{x}\in g(A_0),$ we have % and for any pairs $\la x_i,y_i \ra \in e \uphp g(A_0)$, for $i<n$ for some integer $n$, 
$\ov{x} \sim_\B e(\ov{x})$ since $e$ is an embedding, and thus by the qftp-respecting property of $f$, $f(\ov{x}) \sim_\A f(e(\ov{x}))$.  By these two observations, $f(e\uphp g(A_0)) \in \emb(A,\A)$.
Thus, we may define an induced coloring $c_0: \emb(A',\B) \raw \{0,1,\ldots, r-1\}$ by $c_0(e)=c(f(e \uphp g(A_0)))$.

By assumption, there exists  $h \in \emb(B',\B)$ such that $c_0$ restricted to $h\circ\emb(A',B')$ takes on at most $d$ colors. We let $k=f(h\restriction g(B_0))$. By an argument similar to the one in the previous paragraph, we have that $k\in\emb(B,\A)$.  We claim that $c$ restricted to $k \circ \emb(A,B)$ takes on at most $d$ colors:

%To prove the latter claim, 
Fix any $j \in \emb(A,B)$.  It is enough to show that $c(k \circ j) = c_0(h \circ j')$ for some $j' \in \emb(A',B')$.  By the definition of $A$ and $B$, %for any $j \in A \times B$, $(fg)^{-1}(j) \in A_0 \times B_0$.  Moreover, 
and since $fg: \A \to \A$ is an embedding, we have that $(fg)^{-1}(j) \in \emb(A_0,B_0)$.
%
%Any embedding $e:A'\raw \B$ is uniquely determined by the restriction of $e$ to the generators of $A'$, namely $e\restriction g(A_0)$.  
%By the definition of $A'$, a partial embedding defined on the generators $g(A_0)$ of $A'$ extends uniquely to an embedding in $\emb(A',\B)$.
%
%Thus, we can set 
Let $j'\in \emb(A',B')$ to be unique embedding extending the partial embedding $g((fg)^{-1}(j))=f^{-1}(j):g(A_0) \to g(B_0)$.  
%yes! that restriction on the codomain was needed.
Then $j'$  satisfies $c(k \circ j) = c_0(h \circ j')$.  To see this, note that $c_0(h \circ j')$ is defined to be $c(f((h \circ j') \uphp g(A_0)))$.  However, $(h \circ j') \uphp g(A_0) = h \circ (j' \uphp g(A_0)) = h \circ (f^{-1}(j) \uphp g(A_0)) = (h \uphp g(B_0)) \circ (f^{-1}(j) \uphp g(A_0)) = (f^{-1}(k)) \circ (f^{-1}(j) \uphp g(A_0))$.  Now applying $f$ to $(h \circ j') \uphp g(A_0)$ we obtain $f$ applied to $(f^{-1}(k)) \circ (f^{-1}(j) \uphp g(A_0))$ which is $k \circ j$.  Thus $c(f((h \circ j') \uphp g(A_0))) = c(k \circ j)$, as desired.
\end{proof}

In Proposition \ref{TwoDegrees}, we recalled
%reminded 
the exact formula relating the Ramsey degree for embeddings and the Ramsey degree for substructures in case of a locally finite class. Therefore Theorem \ref{thm_transembd} immediately yields the following corollary.

\begin{cor}\label{degrees_bound}
Suppose that $\A$ is a locally finite semi-retract of $\B$ via $(g,f)$. Let $A \in\age(\A)$. Suppose that $\left<g(A)\right>_{\B}$ has Ramsey degree $d$ for embeddings in $\age(\B)$. Then $A$ has Ramsey degree $\leq d/|\Aut(\A)|$ for substructures in $\age(\A)$.
\end{cor}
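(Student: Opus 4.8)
The plan is to read Corollary \ref{degrees_bound} off Theorem \ref{thm_transembd} together with Proposition \ref{TwoDegrees}, which records the exact multiplicative relationship between the Ramsey degree for embeddings and the Ramsey degree for substructures in a class of finite structures. First I would invoke Theorem \ref{thm_transembd} verbatim: its hypotheses --- $\A$ a locally finite semi-retract of $\B$ via $(g,f)$, $A \in \age(\A)$, and $\left<g(A)\right>_{\B}$ of Ramsey degree $d$ for embeddings in $\age(\B)$ --- are exactly those of the corollary, so $A$ has Ramsey degree at most $d$ for embeddings in $\age(\A)$; in particular $d_e(A,\age(\A)) \leq d < \infty$. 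Second, since $\A$ is locally finite, $\age(\A)$ is a class of finite $L(\A)$-structures, so Proposition \ref{TwoDegrees} applies with $\K = \age(\A)$ and gives $d_e(A,\age(\A)) = |\Aut(A)| \cdot d(A,\age(\A))$. Dividing by $|\Aut(A)|$ yields $d(A,\age(\A)) = d_e(A,\age(\A))/|\Aut(A)| \leq d/|\Aut(A)|$, which is the stated bound.

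I do not anticipate any real obstacle here, since all the substantive work has been done in Theorem \ref{thm_transembd}; the corollary is purely the passage from the embedding formulation to the substructure formulation. The only points to verify are that the two hypotheses of Proposition \ref{TwoDegrees} hold: that $\age(\A)$ consists of finite structures, which is immediate from the local finiteness of $\A$, and that $d_e(A,\age(\A))$ is finite, which is immediate from the conclusion of Theorem \ref{thm_transembd}. Hence the proof consists of citing these two results in sequence and performing the single division above.
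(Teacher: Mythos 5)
Your proof is correct and is exactly the paper's argument: the paper derives this corollary by citing Theorem \ref{thm_transembd} and then Proposition \ref{TwoDegrees} applied to $\K=\age(\A)$ (legitimate since local finiteness of $\A$ makes $\age(\A)$ a class of finite structures), performing the same division. Note that the $|\Aut(\A)|$ appearing in the corollary's statement is evidently a typo for $|\Aut(A)|$, which is what your derivation, correctly, produces.
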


In Theorem \ref{4.2}, we provided an example of the random graph as a semi-retract of the countable atomless Boolean algebra. While the class of finite Boolean algebras is a Ramsey class, the class of finite graphs is not, so we cannot simply say that semi-retractions transfer the Ramsey property for structures. However, the existence of the semi-retraction provides a bound on finite Ramsey degrees of finite graphs as per Corollary \ref{degrees_bound}. We point out that the reason that Section \ref{counterexample} does not provide a direct transfer of the Ramsey property is that the signature of Boolean algebras contains function symbols, $\age(\Ba)$ is not rigid, and the semi-retraction defined in the proof of Theorem \ref{4.2} allows two distinct finite isomorphic graphs to generate the same Boolean algebra via preimages under $f$. We show in  Theorem \ref{fmla_free_relational}  that restricting to relation symbols and in Corollary \ref{cor_fmla_free} that restricting to $\B$ with age consisting to rigid structure, suffice to transfer the Ramsey property, respectively.

%%%this fact was moved to Preliminaries.

%MOTIVATION WHY CONSIDER SPECIAL CASES WITH RIGIDITY

Example \ref{noorder} explained why rigidity is essential for applications of Ramsey theory to indiscernible sequences. Thus we present a few special cases of Theorem \ref{thm_transembd} in the rigid setting.

\begin{cor}\label{cor_fmla_free_degrees} Let $\A, \B$ be structures and let $\K:=\age(\A), \K' = \age(\B)$.  Assume $\A$ is {locally finite} and $\K'$ consists of rigid elements.  Suppose that $\A$ is a semi-retract of $\B$ by the maps $(g,f)$.   For any finite substructure $A \subseteq \A$, letting $A':=\la g(A) \ra_\B$, if $A'$ has finite Ramsey degree $d$ in $\K'$, then $A$ has finite Ramsey degree in $\K$, and in fact, $d(A,\K) \leq d$.
\end{cor}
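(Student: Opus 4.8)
The plan is to obtain this as a direct consequence of Theorem \ref{thm_transembd} together with Proposition \ref{TwoDegrees}, using the rigidity of $\K'$ to move freely between the Ramsey degree for embeddings and the Ramsey degree for substructures.

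First I would record that, since $A' = \la g(A)\ra_\B$ belongs to $\K' = \age(\B)$ and every member of $\K'$ is rigid, $A'$ is rigid, so $|\Aut(A')| = 1$. By Proposition \ref{TwoDegrees} this forces $d_e(A',\K') = |\Aut(A')|\cdot d(A',\K') = d(A',\K')$; in particular the hypothesis that $A'$ has finite Ramsey degree $d$ in $\K'$ yields $d_e(A',\K') = d$. Thus the rigidity assumption on $\K'$ is precisely what lets us feed the substructure degree into a theorem stated for embeddings.

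Next I would invoke Theorem \ref{thm_transembd}: $\A$ is a locally finite semi-retract of $\B$ via $(g,f)$, we have $A \in \age(\A)$, and $\la g(A)\ra_\B$ has Ramsey degree $d$ for embeddings in $\age(\B)$; the conclusion is that $A$ has Ramsey degree at most $d$ for embeddings in $\K = \age(\A)$, i.e.\ $d_e(A,\K) \leq d$. In particular $d_e(A,\K)$ is finite, which is exactly the hypothesis needed to apply Proposition \ref{TwoDegrees} to $A$ in $\K$.

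Finally I would translate back to substructures: since $d_e(A,\K)$ is finite, Proposition \ref{TwoDegrees} gives $d_e(A,\K) = |\Aut(A)|\cdot d(A,\K)$, and as $|\Aut(A)| \geq 1$ this yields $d(A,\K) \leq d_e(A,\K) \leq d$, as claimed (this also recovers, and slightly sharpens in the presence of $\Aut(A)$, the bound of Corollary \ref{degrees_bound}). I do not anticipate a real obstacle here; the argument is essentially bookkeeping, and the only point deserving attention is verifying finiteness of $d_e(A,\K)$ before applying Proposition \ref{TwoDegrees}, which is furnished by the bound coming out of Theorem \ref{thm_transembd}.
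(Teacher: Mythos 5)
Your proposal is correct and follows essentially the same route as the paper: convert the substructure degree of $A'$ to an embedding degree via rigidity and Proposition \ref{TwoDegrees}, transfer via Theorem \ref{thm_transembd}, and convert back using $|\Aut(A)| \geq 1$. Your extra care in checking finiteness of $d_e(A,\K)$ before invoking Proposition \ref{TwoDegrees} is a minor refinement of the same argument, not a different approach.
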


\begin{proof}  By Proposition \ref{TwoDegrees},  $d(A,\K) \leq d_e(A,\K)$.  If $\K'$ consists of rigid elements, then $d_e(A',\K') = (1) \cdot d(A',\K')$.  Thus we have the inequalities:
$$d(A,\K) \leq d_e(A,\K) \leq d_e(A',\K') = d(A',\K')=d$$
where the middle inequality follows from Theorem \ref{thm_transembd}.
\end{proof}

%%%%%%%%%%%%%%%%%%%%%%%%%
%Ramsey duo transfer as a corollary
%%%%%%%%%%%%%%%%%%%%%%%%%%%%
Corollary \ref{cor_fmla_free_duo} follows easily from the argument for Theorem \ref{thm_transembd}: we merely need to be aware of how we fix $B$ in the beginning and set $d=1$.  An alternative argument, not as a corollary of Theorem \ref{thm_transembd}, is presented in the Appendix in Section \ref{ModelTheoryFunctionalSection}.

\begin{cor}\label{cor_fmla_free_duo} Let $\A, \B$ be structures  and suppose $(g,f)$ is a semi-retraction between $\A$ and $\B$.   
Suppose that $\ov{a}, \ov{b}$ are finite tuples from $\A$ that generate finite substructures $A, B$, respectively, of $\A$, such that $g(\ov{a}), g(\ov{b})$ generate rigid substructures $A_0, B_0$, respectively, of $\B$.  If $(A_0,B_0)$ is a Ramsey duo for $\B$, then $(A,B)$ is a Ramsey duo for $\A$.  
%$\age(\B)$ consists of rigid elements and $\A$ is locally finite.  If $\B$ has RP, then $\A$ has RP.
\end{cor}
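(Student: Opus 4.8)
The plan is to recycle the proof of Theorem \ref{thm_transembd} almost verbatim, with two cosmetic changes: fix the target structure $B$ (equivalently $B_0$) once at the outset rather than quantifying over all of $\age(\B)$, and ask for $d=1$ color in the conclusion. The only genuinely new input is that Theorem \ref{thm_transembd} is phrased for Ramsey degrees for \emph{embeddings}, whereas a Ramsey duo is phrased in terms of copies; so I would sandwich the argument between the standard dictionary passing from colorings of copies to colorings of embeddings and back. That dictionary is clean here precisely because $A_0$ and $B_0$ are rigid: on a rigid structure $X$ the assignment $e\mapsto\im e$ is a bijection between $\emb(X,\B)$ and the set of copies of $X$ in $\B$. (One cannot simply quote Corollary \ref{cor_fmla_free_degrees}: that would need $A_0$ to have \emph{finite Ramsey degree} in $\age(\B)$, i.e.\ a partition property against \emph{every} member of $\age(\B)$, whereas here we are handed the partition property only against the single structure $B_0$ --- which is exactly why we re-run the argument rather than quote the statement. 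A self-contained proof is also given in the Appendix.)

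In detail: since $fg:\A\to\A$ is an embedding, $fg(A)\cong A$ and $fg(B)\cong B$, so ${\A \choose fg(A)}={\A \choose A}$ and ${\A \choose fg(B)}={\A \choose B}$; hence it suffices to show that $(fg(A),fg(B))$ is a Ramsey duo for $\A$. Fix $r\ge 2$ and a coloring $c:{\A \choose fg(A)}\raw r$, and lift it to $\tilde c:\emb(fg(A),\A)\raw r$ via $\tilde c(e)=c(\im e)$. Exactly as in the proof of Theorem \ref{thm_transembd}, for each $e\in\emb(A_0,\B)$ one restricts $e$ to the generators $g(\ov{a})$ of $A_0$, applies $f$, and --- using the qftp-respecting property of $f$ to see the result is a partial embedding and that of $g$ to extend it --- lands in $\emb(fg(A),\A)$; composing with $\tilde c$ yields a coloring $c_0:\emb(A_0,\B)\raw r$. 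Since $A_0$ is rigid, distinct embeddings of $A_0$ into $\B$ have distinct images, so $c_0$ descends to a coloring $\bar c_0:{\B \choose A_0}\raw r$.

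Now apply the hypothesis that $(A_0,B_0)$ is a Ramsey duo for $\B$ to $\bar c_0$: there is a copy $B_0'\subseteq\B$ with $B_0'\cong B_0$ on which $\bar c_0$ is constant. Choose $h\in\emb(B_0,\B)$ with $\im h=B_0'$ (it exists since $B_0'\cong B_0$, and is unique since $B_0$ is rigid); unwinding the definition of $\bar c_0$, the coloring $c_0$ is constant on $h\circ\emb(A_0,B_0)$. Set $k=f(h\uphp g(\ov{b}))$, extended to $fg(B)$ as before, so $k\in\emb(fg(B),\A)$. Running the $d=1$ instance of the last computation in the proof of Theorem \ref{thm_transembd} --- for $j\in\emb(fg(A),fg(B))$ take the embedding $j'\in\emb(A_0,B_0)$ corresponding to $(fg)^{-1}(j)$, and check that $\tilde c(k\circ j)=c_0(h\circ j')$ --- shows that $\tilde c$ is constant on $k\circ\emb(fg(A),fg(B))$. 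Since $k$ is an isomorphism of $fg(B)$ onto $B':=\im k\cong B$, the copies of $fg(A)$ inside $B'$ are exactly the sets $\im(k\circ j)$ for $j\in\emb(fg(A),fg(B))$, so $c$ is constant on ${B' \choose A}={B' \choose fg(A)}$. As $r$ was arbitrary, $(fg(A),fg(B))$, and hence $(A,B)$, is a Ramsey duo for $\A$.

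I do not expect any single step to be a real obstacle: the content is entirely bookkeeping --- checking that the maps obtained by composing with $g$ and $f$ and restricting/extending are genuine embeddings (this is where the qftp-respecting properties of $g$ and $f$ are used, precisely as in Theorem \ref{thm_transembd}, see Definition \ref{semir}), and checking that the colorings on the $\B$-side and the $\A$-side correspond under those maps. Rigidity of $A_0$ and $B_0$ enters only in the copies-versus-embeddings dictionary on the $\B$-side, and the reason for fixing $B_0$ at the start is that the Ramsey-duo hypothesis concerns only the single pair $(A_0,B_0)$, so the quantifier over $\age(\B)$ appearing in Theorem \ref{thm_transembd} has to be suppressed.
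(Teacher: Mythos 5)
Your proposal is correct and follows essentially the same route as the paper, which proves Corollary \ref{cor_fmla_free_duo} precisely by re-running the argument of Theorem \ref{thm_transembd} with $B$ fixed at the outset and $d=1$; the copies-versus-embeddings dictionary via rigidity of $A_0$ and $B_0$, which you spell out, is exactly the detail the paper leaves implicit in its one-line justification. (The paper's appendix, Section \ref{ModelTheoryFunctionalSection}, gives the alternative direct argument on copies via the ``restricted inverse images under $f$'' property, which you correctly note exists but do not need.)
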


Corollary \ref{cor_fmla_free} is a special case of Corollary \ref{cor_fmla_free_duo}.
\begin{cor}\label{cor_fmla_free} Let $\A, \B$ be structures  and suppose $(g,f)$ is a semi-retraction between $\A$ and $\B$.   
Suppose that $\A$ is locally finite, and $\age(\B)$ consists of rigid structures.  If $\B$ has RP, then $\A$ has RP.
%$\age(\B)$ consists of rigid elements and $\A$ is locally finite.  If $\B$ has RP, then $\A$ has RP.
\end{cor}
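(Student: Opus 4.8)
The plan is to deduce this as the announced special case of Corollary~\ref{cor_fmla_free_duo}. Unwinding the definitions, ``$\A$ has RP'' means $\age(\A)$ has RP, i.e. $(A,B)$ is a Ramsey duo for the class $\age(\A)$ for every $A,B\in\age(\A)$. So I would fix $A,B\in\age(\A)$; since $\A$ is locally finite, $A$ and $B$ are finite, and we may assume they are genuine substructures of $\A$. Let $\ov{a},\ov{b}$ be tuples from $\A$ enumerating the underlying sets of $A$ and $B$, so that $\ov{a}$ generates $A$ and $\ov{b}$ generates $B$, and set $A_0:=\la g(\ov{a})\ra_\B$ and $B_0:=\la g(\ov{b})\ra_\B$. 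These are finitely generated substructures of $\B$, hence members of $\age(\B)$, hence rigid by hypothesis. This is the only place the rigidity assumption is used, and together with local finiteness of $\A$ it is exactly what is needed to invoke Corollary~\ref{cor_fmla_free_duo} on $(g,f)$ and the tuples $\ov{a},\ov{b}$.

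The second step is to check that $(A_0,B_0)$ is a Ramsey duo for the \emph{structure} $\B$, which is the hypothesis of Corollary~\ref{cor_fmla_free_duo}. Since $\B$ has RP, $(A_0,B_0)$ is a Ramsey duo for the \emph{class} $\age(\B)$: for each $r\ge 2$ there is $C\in\age(\B)$ with $C\longrightarrow (B_0)^{A_0}_r$. Every member of $\age(\B)$ is isomorphic to a substructure of $\B$, so we may take $C\subseteq\B$. Given an arbitrary $r$-colouring $c$ of ${\B\choose A_0}$, restrict it to ${C\choose A_0}$ and apply $C\longrightarrow (B_0)^{A_0}_r$ to obtain a copy $B_0'\subseteq C\subseteq\B$ of $B_0$ on whose $A_0$-substructures $c$ is constant. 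Hence $\B\longrightarrow (B_0)^{A_0}_r$ for every $r$, i.e. $(A_0,B_0)$ is a Ramsey duo for $\B$. (Only this ``pass to a subcopy'' direction is needed, and it requires nothing about $\B$ being locally finite.)

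Now Corollary~\ref{cor_fmla_free_duo} yields that $(A,B)$ is a Ramsey duo for the structure $\A$; applying Proposition~\ref{compactness_duo} (with $\K:=\age(\A)$, using that $\A$ is locally finite) converts this into the statement that $(A,B)$ is a Ramsey duo for the class $\age(\A)$. Since $A,B\in\age(\A)$ were arbitrary, $\age(\A)$ has RP, i.e. $\A$ has RP, which completes the argument. I do not anticipate a genuine obstacle, since all the combinatorial content is packaged inside Theorem~\ref{thm_transembd}/Corollary~\ref{cor_fmla_free_duo}; the only points that require care are bookkeeping ones --- confirming that $\la g(\ov{a})\ra_\B\in\age(\B)$ so that rigidity applies, and keeping straight the difference between ``Ramsey duo for a structure'' and ``Ramsey duo for its age'' (handled on the $\B$-side by the trivial subcopy argument and on the $\A$-side by Proposition~\ref{compactness_duo}). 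As a shortcut that avoids Proposition~\ref{compactness_duo}, one can instead argue with degrees: $A_0$ rigid and $\age(\B)$ having RP give $d_e(A_0,\age(\B))=1$ (by the remark following Theorem~\ref{ea}, or directly since, by rigidity, embeddings of $A_0$ biject with copies of $A_0$ as in Remark~\ref{unique}); Theorem~\ref{thm_transembd} then forces $d_e(A,\age(\A))\le 1$, and one more application of Proposition~\ref{TwoDegrees} gives $d(A,\age(\A))\le d_e(A,\age(\A))=1$, so every object of $\age(\A)$ has Ramsey degree $1$ and $\age(\A)$ has RP.
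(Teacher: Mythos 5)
Your proposal is correct and follows the same route as the paper, which disposes of Corollary~\ref{cor_fmla_free} in one line as ``a special case of Corollary~\ref{cor_fmla_free_duo}''; you have simply supplied the bookkeeping the paper leaves implicit (that $\la g(\ov{a})\ra_\B,\la g(\ov{b})\ra_\B\in\age(\B)$ are rigid, the trivial subcopy argument on the $\B$-side, and Proposition~\ref{compactness_duo} on the locally finite $\A$-side to pass from ``Ramsey duo for $\A$'' back to ``Ramsey duo for $\age(\A)$''). Your aside that only the easy direction of the class-versus-structure equivalence is needed for $\B$ (so its local finiteness is not required) is a correct and worthwhile observation.
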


To show the necessity of the assumptions in Corollary \ref{cor_fmla_free}, we give an example of a structure $\A$ that is a semi-retract of $\B$, where each structure is in a non-relational signature, such that $\A$ fails to be locally finite, the element in $\age(\B)$ is not rigid, and RP fails to transfer from $\B$ to $\A$.

\begin{ex}\label{notlocfin}    
Let $\A = (\Z,s)$, $\B = (\Z, s, p)$ where $s, p$ are interpreted as the successor and predecessor functions, respectively, that is, $s(z)=z+1$ and $p(z)=z-1$ for every $z\in\Z$.  Note that any finite subset of $\B$ generates $\B$, and so $\B$ trivially has RP.  On the other hand, any finite subset $X$ of $\A$ generates the tail $[a, \infty) \subseteq \Z$ where $a = \min X$.  

To see that $\A$ fails to have RP, let $A = B = [0,\infty)$.  For any copy of $A$, there is a unique element $a \in A$ ($\min A$) that generates all of $A$.  Color copies of $A$ in $\A$ red if that unique element is odd, blue, if it is even.  There is no copy of $B$ in $\A$ that is homogeneous for this coloring on copies of $A$.  

It remains to verify that the identity maps give a semi-retraction $\A \xrightarrow{\id} \B \xrightarrow{\id} \A$, but this is the case because $\A$ and $\B$ are quantifier-free interdefinable (see Remark \ref{interdef}).  In the case of the predecessor function, we can define it from the successor function by switching variables: For any $a_1, a_2 \in |\A|=|\B|=\Z$, $\B \vDash p(a_2)=a_1 \Leftrightarrow \A \vDash s(a_1)=a_2$.
\end{ex}

To show the necessity of the assumptions in Corollary \ref{cor_fmla_free_duo}, we give an example of a structure $\A$ that is a semi-retract of $\B$, where $\A$ is in a non-relational signature, such that all structures in $\age(\B)$ are finite and rigid, and a Ramsey duo for $\B$ fails to transfer to a Ramsey duo for $\A$, specifically because the pair in $\A$ is a counterexample to local finiteness in $\A$.

\begin{ex}\label{succ} Let $\A = (\Z, s)$ be the structure where $s$ is interpreted as the successor function on $\Z$.  
Let $\B = (\Z, \{R_{(n,\ov{k})}\})$ 
where $R_{(n,\ov{k})}(a_0,\ldots,a_{n-1})$ for $\ov{k}=(k_0,\ldots,k_{n-1})$ holds exactly of increasing $n$-tuples $a_0<a_1<\ldots < a_{n-1}$ from $\Z$ such that $a_{i+1}-a_i=k_i$, for all $i<n$.
The identity maps $(g,f):=(\textrm{id},\textrm{id})$ give a semi-retraction because $\A$ and $\B$ are quantifier-free interdefinable (see Remark \ref{interdef}).

%We have already argued in Example \ref{notlocfin} that $A$ is not a Ramsey object in $\age(\A)$, and in this example we will look at how this impacts the mechanics of our argument for Theorem \ref{fmla_free}.  

Using the notation of Corollary \ref{cor_fmla_free_duo}, let $a:=0, b:=0$ in $\A$, and $a_0:=g(0)=0, b_0:=g(0)=0$ in $\B$.  Define substructures of $\A$: $A:=\la a \ra_\A = [0,\infty)$, $B:=\la b \ra_\A = [0,\infty)$.  Define substructures of $\B$: $A_0:=\la g(a_0) \ra_\B = \{0\}$, $B_0:=\la g(b_0) \ra_\B = \{0\}$.  Clearly $(A_0,B_0)$ is a Ramsey duo for $\B$.  Moreover, all structures in $\age(\B)$ are finite, and thus rigid.

However, the structures generated by $a$ and $b$ in $\A$ are not finite, so the assumptions of Corollary \ref{cor_fmla_free_duo} are not satisfied.  And indeed, the conclusion of Corollary \ref{cor_fmla_free_duo} is not achieved: $(A,B)$ is not a Ramsey duo for $\A$, as we saw in Example \ref{notlocfin}.
\end{ex}

Example \ref{succ} can be modified slightly to satisfy the assumptions of Corollary \ref{cor_fmla_free}.  %In Example \ref{pred}, both $\A$ and $\B$ trivially have RP because for all $A, B$ in the age of one of the structures, there is at most one copy of $A$ in $B$.

\begin{ex}\label{pred} Let $\A = (\Nn, p)$ be the structure where $p$ is interpreted as the predecessor function on the positive integers, i.e. $p(n+1)=n$, for all $n \in \Nn$, and $p(0):=0$.  Let $\B = (\Nn, p, s)$ where $p$ is defined as in $\A$ and $s$ is the successor function on $\Nn$.
%Let $\B = (\Nn, \{R_{(n,\ov{k})}\},0)$, where the predicates $R_{(n,\ov{k})}$ are defined as in Example \ref{succ}.
%
The identity maps $(g,f):=(\textrm{id},\textrm{id})$ give a semi-retraction because $\A$ and $\B$ are quantifier-free interdefinable (see Remark \ref{interdef}).  In the case of the successor function, we can define it from the predecessor function by switching variables: for any $a_1, a_2 \in |\A|=|\B|=\Nn$, $\B \vDash s(a_1)=a_2 \Leftrightarrow \A \vDash p(a_2)=a_1 \wedge a_1 \neq a_2$.

It is clear that $\A$ is locally finite.
The structure $\A$ trivially has RP because for all $A, B$ in $\age(\A)$, there is at most one copy of $A$ in $B$.  The structure $\B$ trivially has RP because $\age(\B)$ consists of one element (as in Example \ref{notlocfin}) however this one element (which is all of $\B$) happens to be rigid, because the element $0$ must be fixed by any automorphism of $\B$.  Thus, the assumptions (and conclusion) of Corollary \ref{cor_fmla_free} are satisfied in this example.
%We have already argued in Example \ref{notlocfin} that $A$ is not a Ramsey object in $\age(\A)$, and in this example we will look at how this impacts the mechanics of our argument for Theorem \ref{fmla_free}.  
\end{ex}

%%%%%%%%%%%%%%%%%%%%%%%%%%%%%%%%%%
%dana's passing text is here
%%%%%%%%%%%%%%%%%%%%%%%%%%%%%%%%%%

\begin{thm}\label{fmla_free_relational} Let $\A, \B$ be structures each in %(possibly distinct) 
relational signatures and suppose that $\A$ is a semi-retract of $\B$, and let $A\in\age(\A).$  If $\la g(A)\ra_{\B}$ has Ramsey degree $d$ in $\age(\B)$, then $A$ has Ramsey degree bounded by $d$ in $\A$.
\end{thm}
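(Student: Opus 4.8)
The plan is to deduce the statement from Theorem \ref{thm_transembd} together with the exact formula of Proposition \ref{TwoDegrees}, after establishing one observation special to relational signatures. Write $A' := \la g(A) \ra_\B$. Since the signature of $\B$ is relational, $A'$ has underlying set exactly $g(A)$ with the induced relations, and $f(A') = fg(A)$ is a copy of $A$ in $\A$ because $fg$ is an embedding. The observation I would prove is that $|\Aut(A')| = |\Aut(A)|$. The point is that in a relational structure the quantifier-free type of a tuple lying inside a finite substructure is computed inside that substructure, so a qftp-respecting injection restricted to a finite substructure carries automorphisms to automorphisms: $\phi \mapsto (g(x) \mapsto g(\phi(x)))$ is an injection $\Aut(A) \hookrightarrow \Aut(A')$, while $\psi \mapsto (f(x) \mapsto f(\psi(x)))$ is an injection $\Aut(A') \hookrightarrow \Aut(f(A')) = \Aut(fg(A))$, and $|\Aut(fg(A))| = |\Aut(A)|$ since $fg(A) \cong A$. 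Sandwiching the cardinalities gives the equality.

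Granting this, the argument is short. Relational signatures are locally finite, so $\age(\A)$ and $\age(\B)$ are classes of finite structures, $\A$ is a locally finite semi-retract of $\B$, and Lemma \ref{compactness_emb_degrees} applies. Let $d := d(A', \age(\B))$ be the substructure Ramsey degree appearing in the hypothesis. By Proposition \ref{TwoDegrees} in $\age(\B)$, $d_e(A', \age(\B)) = |\Aut(A')| \cdot d$, which is finite. Theorem \ref{thm_transembd} then gives $d_e(A, \age(\A)) \le d_e(A', \age(\B)) = |\Aut(A')| \cdot d < \infty$, so Proposition \ref{TwoDegrees} applies once more, now in $\age(\A)$, and yields
\[
d(A, \age(\A)) \;=\; \frac{d_e(A, \age(\A))}{|\Aut(A)|} \;\le\; \frac{|\Aut(A')| \cdot d}{|\Aut(A)|} \;=\; d,
\]
where the last equality is the observation above. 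By Lemma \ref{compactness_emb_degrees} this coincides with $d(A,\A) \le d$, which is exactly the assertion.

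A more self-contained ``formula-free'' variant, closer in spirit to the proof of Theorem \ref{thm_transembd} but argued at the level of copies rather than embeddings, is also available: given a colouring of the $A$-substructures of $\A$, transport it along $f$ to a colouring of the $A'$-substructures of $\B$, invoke the Ramsey-degree hypothesis for $A'$ in $\B$ to find a copy $\widetilde{B'}$ of $\la g(B) \ra_\B$ carrying at most $d$ colours on its $A'$-substructures, and verify that $f(\widetilde{B'})$ is a copy of $B$ in $\A$ that is $\le d$-chromatic. What makes this go through --- and what fails over the functional signature of Boolean algebras in Section \ref{counterexample} --- is again relationality: because $\la g(A) \ra_\B = g(A)$ generates no extra elements, every copy of $A$ inside $f(\widetilde{B'})$ has the form $f(\widetilde{A'})$ for a copy $\widetilde{A'}$ of $\la g(A) \ra_\B$ inside $\widetilde{B'}$, so distinct copies of $A$ never collapse to the same copy of $A'$. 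Either way I do not expect a genuine obstacle; the only hazard is bookkeeping, since $g$ and $f$ are maps between structures in different signatures that are merely qftp-respecting rather than embeddings, so at each step one must check, in the appropriate signature, that the relevant restriction or transport map is actually an embedding --- and each such check reduces to the locality of quantifier-free types in relational structures.
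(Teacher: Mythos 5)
Your primary argument is correct, and it takes a genuinely different route from the paper's. The paper proves Theorem \ref{fmla_free_relational} directly at the level of substructures: it defines the induced coloring $c_0(A_0'):=c(f(A_0'))$ on copies of $A_0=g(A)$ in $\B$, finds a $\le d$-chromatic copy $B_0'$ of $g(B)$, and then uses Proposition \ref{prop_rlnl_restricted} (the ``relational restricted inverse images under $f$'' property) to show that every copy of $A$ in $f(B_0')$ is $f(A_0')$ for a copy $A_0'$ of $A_0$ in $B_0'$ --- which is exactly the content of your secondary sketch. Your primary derivation instead routes through the embedding-degree transfer of Theorem \ref{thm_transembd} (essentially Corollary \ref{degrees_bound}) and cancels the automorphism factors via the observation that $|\Aut(\la g(A)\ra_\B)|=|\Aut(A)|$ when the signatures are relational; that observation is new relative to the paper, your sandwich proof of it is sound, and it isolates cleanly why relationality matters (in the Boolean-algebra example of Section \ref{counterexample} the generated subalgebra has strictly more automorphisms, the factors do not cancel, and indeed only the weaker bound of Corollary \ref{degrees_bound} survives --- consistent with Remark \ref{rmkba}). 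What your route buys is a shorter, more conceptual proof that reuses the embedding machinery; what the paper's route buys is a self-contained substructure-level argument plus the reusable Propositions \ref{prop_rlnl_restricted} and \ref{prop_fcn_restricted}, which the Appendix also needs for the rigid functional case.

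One small point to patch: you invoke Proposition \ref{TwoDegrees} in the direction ``$d(A',\age(\B))$ finite implies $d_e(A',\age(\B))=|\Aut(A')|\cdot d(A',\age(\B))$ finite,'' whereas the paper states only the converse implication (finiteness of $d_e$ yields finiteness of $d$ and the formula). The direction you need is true and is part of the full statement in the cited source (Proposition 3.1 of the Mašulović--Zucker-style reference), but as written you should either cite that full statement or include the standard argument: color each copy of $A'$ by the $r^{|\Aut(A')|}$-tuple recording the colors of the $|\Aut(A')|$ embeddings onto it, apply the substructure degree with $r^{|\Aut(A')|}$ colors, and observe that $d$ super-colors unpack to at most $|\Aut(A')|\cdot d$ colors on embeddings. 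With that one line supplied, the proof is complete.
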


It would be possible to prove Theorem \ref{fmla_free_relational} with no mention of function symbols, but there is a property, the ``restricted inverse images under $f$'' property (defined in Definitions \ref{restricted_rlnl} and \ref{restricted}, that could be of independent interest in the functional case.  For this reason, we pursue a slightly longer development of the argument than needed here.  The reader who would like to pursue the additional details for a proof of Corollary \ref{cor_fmla_free} using the ``restricted inverse images under $f$'' property, is invited to read Section \ref{ModelTheoryFunctionalSection} within the Appendix.

%%%%%%%%%%%%%%%%%%%%%%%%%%%%%%%%%%%%%%%%%%%%%%%%
%relational
%%%%%%%%%%%%%%%%%%%%%%%%%%%%%%%%%%%%%%%%%%%%%%%%%%%
The argument for Theorem \ref{thm_transembd} is of a category-theoretic nature and so certain details at the level of substructures may not be immediately evident.  For example, consider Definition \ref{restricted_rlnl} and what it would take for a certain finite $B_0 \subseteq \B$ to have this property.

\begin{dfn}\label{restricted_rlnl} Fix structures $\A, \B$ in relational signatures, finite substructures $A, B \subseteq \A$ and an injection $f: \B \raw \A$.  Fix substructures $A_0, B_0$ from $\B$ such that $||A_0||=||A||$.
We say that $B_0$ has the \textit{relational restricted inverse images under $f$ property for $A$ witnessed by $A_0$} if for any $C_1 \subseteq f(B_0)$ such that $C_1 \cong_{\LA} A$,
$f^{-1}(C_1) \cong_{\LB} A_0$.
\end{dfn}

It turns out that if $\A$ is a semi-retract of $\B$ via $(g,f),$ so long as $B_0$ is isomorphic to an image under $g$ of a structure in $\A$ containing $A$ as a substructure, then $B_0$ has restricted inverse images under $f$ for $A.$ %the isomorphism types in $B_0$ are restricted.

\begin{prop}\label{prop_rlnl_restricted} Fix structures $\A$ and $\B$ in relational signatures.  
Fix  a semi-retraction $(g,f)$ between $\A$ and $\B$ and finite substructures $A, B \subseteq \A$.  Let $A_0 = g(A), B_0=g(B), A_1=f(A_0)$.  Then, for any $B_0' \cong_{\LB} B_0$, $B_0'$ has the relational restricted inverse images under $f$ property for $A$ witnessed by $A_0$.
\end{prop}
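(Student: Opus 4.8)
The plan is to prove the property by two successive reductions. The conceptual hurdle is that $f$ is merely qftp-respecting, so it is not clear how to transport information from $f(B_0)$ to $f(B_0')$ for an arbitrary $\LB$-isomorphic copy $B_0'$ of $B_0$; the resolution is the observation that $f$ nonetheless carries $\LB$-isomorphisms to honest $\LA$-isomorphisms. Precisely, I would first record the following: \emph{if $h\colon\M\raw\N$ is a qftp-respecting injection between relational structures, $M_1,M_2\subseteq\M$ are finite substructures, and $\phi\colon M_1\raw M_2$ is an $L(\M)$-isomorphism, then $h(\phi)\colon h(M_1)\raw h(M_2)$ is an $L(\N)$-isomorphism.} This is immediate from the relational hypothesis: enumerating $M_1$ by a tuple $\ov{m}$, the isomorphism $\phi$ gives $\ov{m}\sim_\M\phi(\ov{m})$, hence $h(\ov{m})\sim_\N h(\phi(\ov{m}))$ by the qftp-respecting property, and since in a relational signature a quantifier-free type is determined by its atomic formulas, $h(\ov{m})\sim_\N h(\phi(\ov{m}))$ says exactly that the bijection $h(\phi)$ respects and reflects every atomic relation.

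For the first reduction, fix an $\LB$-isomorphism $\psi\colon B_0\raw B_0'$. By the observation applied to $f$, the map $f(\psi)\colon f(B_0)\raw f(B_0')$ is an $\LA$-isomorphism. Given $C_1\subseteq f(B_0')$ with $C_1\cong_{\LA}A$, set $C_1^{*}:=f(\psi)^{-1}(C_1)\subseteq f(B_0)$, so $C_1^{*}\cong_{\LA}A$ as well. Using the defining identity $f(\psi)(f(b))=f(\psi(b))$, one checks that $\psi$ restricts to a bijection from $f^{-1}(C_1^{*})$ onto $f^{-1}(C_1)$ — here I use that $f$ is injective and $C_1^{*}\subseteq f(B_0)$, $C_1\subseteq f(B_0')$, so these preimages lie inside $B_0$ and $B_0'$ respectively — and therefore to an $\LB$-isomorphism. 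Hence $f^{-1}(C_1)\cong_{\LB}f^{-1}(C_1^{*})$, and it is enough to show $f^{-1}(C_1^{*})\cong_{\LB}A_0$; in other words, we may assume $B_0'=B_0$.

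For the second reduction, note that $f(B_0)=f(g(B))=fg(B)$. Since $fg\colon\A\raw\A$ is an embedding, $fg\uphp B$ is an $\LA$-isomorphism of $B$ onto $fg(B)$, so $A':=(fg)^{-1}(C_1^{*})$ is a substructure of $B$ with $A'\cong_{\LA}C_1^{*}\cong_{\LA}A$. Because $f$ is injective and $C_1^{*}\subseteq fg(B)$, one gets $f^{-1}(C_1^{*})=g(A')$. Applying the observation to the qftp-respecting injection $g$, the isomorphism $A'\cong_{\LA}A$ yields $g(A')\cong_{\LB}g(A)=A_0$, so $f^{-1}(C_1^{*})=g(A')\cong_{\LB}A_0$, which gives the claim (and $||A_0||=||g(A)||=||A||$, so Definition \ref{restricted_rlnl} indeed applies). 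The crux of the whole argument is the opening observation — that a qftp-respecting injection between relational structures sends isomorphisms to isomorphisms; once that is in hand, verifying in the first reduction that $f(\psi)$ is an isomorphism and that $\psi$ carries $f^{-1}(C_1^{*})$ onto $f^{-1}(C_1)$ is purely set-theoretic bookkeeping, so I do not expect any serious obstacle.
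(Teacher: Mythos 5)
Your proof is correct, but it takes a different route from the paper's. The paper disposes of Proposition \ref{prop_rlnl_restricted} in a few lines by enumerating $A$, $B$ and invoking the tuple-level Proposition \ref{prop_fcn_restricted}, whose proof is carried out with quantifier-free types precisely so that it also covers signatures with function symbols (there one must worry about generated substructures, and about which preimage tuples even land inside $\ov{b}_0$). You instead prove the relational statement directly, organized around the observation that a qftp-respecting injection between relational structures sends $\LA$-isomorphisms of finite substructures to $\LB$-isomorphisms of their images (and vice versa for $f$); this is exactly the point where relationality is used, since a complete quantifier-free type is then determined by its atomic part and every subset is a substructure. Modulo packaging, the two arguments deploy the same three ingredients in a different order: the paper's proof of Proposition \ref{prop_fcn_restricted} first treats $\ov{b}_0'=\ov{b}_0$ via the embedding $fg$ and the qftp-respecting property of $g$, and then transfers to an arbitrary $\ov{b}_0'\sim_\B\ov{b}_0$ via the qftp-respecting property of $f$; you first reduce from $B_0'$ to $B_0$ using $f(\psi)$, and then use $fg$ and $g$ to identify $f^{-1}(C_1^*)$ with $g(A')\cong_{\LB}A_0$. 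What your version buys is a self-contained, isomorphism-level proof of the relational case with no type-chasing; what the paper's version buys is a single tuple lemma that also serves the functional setting used elsewhere (Definition \ref{restricted} and the Appendix argument for Corollary \ref{cor_fmla_free}). All the set-theoretic bookkeeping you defer (that $\psi$ carries $f^{-1}(C_1^*)$ bijectively onto $f^{-1}(C_1)$, and that $f^{-1}(C_1^*)=g(A')$ when $C_1^*\subseteq fg(B)$) does go through using only the injectivity of $f$ and $g$.
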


%%%%%%%%%%%%%
%short mention of functional case
%%%%%%%%%%%%%%%%%%%%%
To develop the proof, we first state the analogues of Definition \ref{restricted_rlnl} and Proposition \ref{prop_rlnl_restricted} in the case that the signatures contain function symbols.  Since Definition \ref{restricted} seems technical at first, we motivate it with Example \ref{trees_example}.

\begin{ex}\label{trees_example}  Let $\A$ and $\B$ be as defined in Proposition \ref{treeprop}.
Fix a quantifer-free type $p(x_0,x_1) = \{E(x_0,x_1), E(x_1,x_2), x_0\prec x_1\prec x_2\}$ in $\A$ and let $\ov{a}$ be a realization of this type in $\A$.  Let $\ov{b}$ be a finite tuple from $\A$ containing the elements of $\ov{a}$.  Let $(g,f)$ be the semi-retraction described in Proposition \ref{treeprop}.  Define $\ov{a}_0:=g(\ov{a})$, $\ov{b}_0:=g(\ov{b})$.

Consider the various quantifier-free types of $\{b_0 \lx b_1 \lx b_2\}$ in the tree $\B:=\I_\str$ that map to copies of $\ov{a}$ in $\A$ under the map $f: \B \raw \A$: 

\begin{enumerate}
\item $\neg ( b_0 \wedge b_1 \unlhd b_2),$
\item $\neg ( b_1 \wedge b_2 \unlhd b_0),$
\item\label{tree3} $b_0 \wedge b_1 = b_1 \wedge b_2.$
\end{enumerate}

\noindent The quantifier-free type described in \eqref{tree3} corresponds to $\ov{a}_0$ in Definition \ref{restricted}, since it is the only quantifier-free type that is realized within $\ov{b}_0$ in $\B$ that is a preimage of $\ov{a}$.
\end{ex}

\begin{dfn}\label{restricted}  Fix finite tuples $\ov{a}, \ov{b}$ from $\A$ and an injection $f: \B \raw \A$.  Fix finite tuples $\ov{a}_0,\ov{b}_0$ from $\B$ such that $|\ov{a}| = |\ov{a}_0|$.
We say that $\ov{b}_0$ has the \textit{restricted inverse images under $f$ property for $\ov{a}$ witnessed by $\ov{a}_0$} if for any $\ov{c}_1$ in $\la f(\ov{b}_0) \ra_{\A}$ such that $\ov{c}_1 \sim_{\A} \ov{a}$, 
%CHANGED, affirmative!
$\ov{c}_0:=f^{-1}(\ov{c}_1) \subseteq \ov{b}_0$
%fix JUST THE TUPLE
and
$\ov{c}_0 \sim_{\B} \ov{a}_0$.
%IF it is in the range.
\end{dfn}

%\begin{rmk}  In the case that $\A$ is locally finite, we may assume that $\ov{a},\ov{b}$ in Definition \ref{restricted} above enumerate finitely generated substructures from $\A$, and replace $\sim_{\A}$ with $\cong_{\A}$.
%\end{rmk}

\begin{prop}\label{prop_fcn_restricted} Let $\A$ and $\B$ be any structures with a semi-retraction $(g,f)$ between $\A$ and $\B$.
%if $\A$ is locally finite and $\A$ is a semi-retract of $\B$, then 
For any finite tuples $\ov{a}, \ov{b}$ \textit{enumerating substructures of $\A$}, %$\ov{b}_0$
%has the restricted inverse images under $f$ property for $\ov{a}$ witnessed by $g(\ov{a})$ and
%
for any $\ov{b}_0' \sim_\B g(\ov{b})$,
%if $\ov{b}_0'$ enumerates a substructure of $\B$, then
%
$\ov{b}_0'$ has the restricted inverse images under $f$ property for $\ov{a}$ witnessed by $g(\ov{a})$.
\end{prop}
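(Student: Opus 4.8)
The plan is to run the argument directly for a general $\ov{b}_0' \sim_\B g(\ov{b})$, reducing to the image $\la fg(\ov{b})\ra_\A$ along an isomorphism and then exploiting that $fg$ is an embedding; the special case $\ov{b}_0' = g(\ov{b})$ (where the isomorphism below is the identity) already carries the essential content. Write $A := \la \ov{a}\ra_\A$, $B := \la \ov{b}\ra_\A$, and $\ov{a}_0 := g(\ov{a})$; since $g$ is injective $|\ov{a}_0| = |\ov{a}|$, so Definition \ref{restricted} is applicable with $\ov{a}_0$ as the witness. The facts to be used are: $g$ and $f$ are qftp-respecting; $fg : \A \to \A$ is an embedding, hence qftp-preserving and satisfying $fg(B) = \la fg(\ov{b})\ra_\A$ with $fg \uphp B : B \to \la fg(\ov{b})\ra_\A$ an isomorphism; and an equality $\ov{u} \sim_\A \ov{v}$ of tuples extends uniquely to an isomorphism $\la \ov{u}\ra_\A \to \la \ov{v}\ra_\A$ of the generated substructures.

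Fix $\ov{b}_0' \sim_\B g(\ov{b})$ and write $\ov{b} = (b_i)_{i<m}$, $\ov{b}_0' = (u_i)_{i<m}$. Since $f$ is qftp-respecting, $f(\ov{b}_0') \sim_\A f(g(\ov{b})) = fg(\ov{b})$, so the assignment $f(u_i) \mapsto fg(b_i)$ extends to an isomorphism $\theta : \la f(\ov{b}_0')\ra_\A \to \la fg(\ov{b})\ra_\A$. Let $\ov{c}_1$ be any tuple in $\la f(\ov{b}_0')\ra_\A$ with $\ov{c}_1 \sim_\A \ov{a}$. Then $\theta(\ov{c}_1) \in \la fg(\ov{b})\ra_\A = fg(B)$, so there is a unique tuple $\ov{d}$ from $B$ with $fg(\ov{d}) = \theta(\ov{c}_1)$; since $fg$ is qftp-preserving and $\theta$ an isomorphism, $\ov{d} \sim_\A fg(\ov{d}) = \theta(\ov{c}_1) \sim_\A \ov{c}_1 \sim_\A \ov{a}$. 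Because $\ov{b}$ enumerates $B$, every coordinate of $\ov{d}$ is a coordinate of $\ov{b}$; write $\ov{d} = (b_{i_k})_{k<l}$. Applying $\theta^{-1}$ coordinatewise and using $\theta^{-1}(fg(b_{i_k})) = f(u_{i_k})$, we get $\ov{c}_1 = \theta^{-1}(fg(\ov{d})) = f(\ov{e})$, where $\ov{e} := (u_{i_k})_{k<l}$ is a subtuple of $\ov{b}_0'$. By injectivity of $f$, $f^{-1}(\ov{c}_1)$ is defined and equals $\ov{e}$, so $\ov{c}_0 := f^{-1}(\ov{c}_1) = \ov{e} \subseteq \ov{b}_0'$. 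Finally, selecting the coordinates $(i_k)_{k<l}$ from the equality $\ov{b}_0' \sim_\B g(\ov{b})$ gives $\ov{e} \sim_\B (g(b_{i_k}))_{k<l} = g(\ov{d})$, while $g$ qftp-respecting together with $\ov{d} \sim_\A \ov{a}$ gives $g(\ov{d}) \sim_\B g(\ov{a}) = \ov{a}_0$; hence $\ov{c}_0 \sim_\B \ov{a}_0$, which is what Definition \ref{restricted} demands.

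I expect the only delicate point to be this coordinate bookkeeping: one must keep straight that $\theta$ pairs the coordinate $f(u_i)$ of $f(\ov{b}_0')$ with the coordinate $fg(b_i)$ of $fg(\ov{b})$, and that the hypothesis "$\ov{b}$ enumerates a substructure of $\A$" is precisely what lets us recognize each coordinate of $\ov{d}$ as some $b_{i_k}$ — without it, $f^{-1}(\ov{c}_1)$ need not even be a subtuple of $\ov{b}_0'$. Everything else is bookkeeping with the three quoted structural facts about the semi-retraction. Proposition \ref{prop_rlnl_restricted} then drops out as the instance in which generated substructures contribute no new elements, so that $\la f(\ov{b}_0')\ra_\A$ may be replaced by $f(\ov{b}_0')$ throughout and the conclusion rephrased in terms of isomorphism of finite substructures.
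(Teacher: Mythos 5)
Your proof is correct and follows essentially the same route as the paper's: both arguments pull $\ov{c}_1$ back into $fg(B)=\la fg(\ov{b})\ra_\A$ using that $fg$ is an embedding and that $\ov{b}$ enumerates a substructure, identify the preimage as a subtuple of $\ov{b}$, and push it through $g$ to compare with $g(\ov{a})$. The only organizational difference is that the paper first proves the special case $\ov{b}_0'=g(\ov{b})$ and then transfers to a general $\ov{b}_0'\sim_\B g(\ov{b})$ by matching coordinates inside $\B$, whereas you absorb that transfer into the isomorphism $\theta$ and run the whole argument in one pass; the underlying facts used are identical.
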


\begin{proof} Fix structures $\A, \B$ and a pair of maps $g: \A \raw \B$ and $f: \B \raw \A$ witnessing that $\A$ is a semi-retract of $\B$.  Fix finite tuples $\ov{a},\ov{b}$ enumerating substructures of $\A$ and let $\ov{a}_0:=g(\ov{a}), \ov{b}_0:=g(\ov{b})$, $\ov{a}_1:=f(\ov{a}_0)$, and $\ov{b}_1:=f(\ov{b}_0)$.  Let $n:=|\ov{a}|$.  

%IS THIS still useful?
%Let $\{\ov{d}_i\}_{i < \alpha}$ be a list of length-$n$ pairwise-non-$\sim_{\B}$ tuples from $\B$ such that $f(\ov{a}_i) \sim_{\A} \ov{a}$ for every $i<\alpha$, and for any length-$n$ tuple $\ov{c}$ from $\B$ such that $f(\ov{c}) \sim_{\A} \ov{a}$, there exists some $i <\alpha$ such that $\ov{c} \sim_{\B} \ov{d}_i$.   Such a list $\{\ov{d}_i\}$ is well defined by the qftp-respecting property of semi-retractions.  We may assume that $\ov{d}_0=\ov{a}_0$. 
 
%%%
Fix a tuple $\ov{c}_1$ in $\la \ov{b}_1 \ra_{\A}$ such that:
%$f^{-1}(\ov{c}) \sim_{\B} \ov{a}_0$.
\setcounter{equation}{0}
\begin{eqnarray}\label{0}
\ov{c}_1 \sim_\A \ov{a}.  
\end{eqnarray}

\noindent We will argue that $\ov{c}_0:=f^{-1}(\ov{c}_1)$ is in $\la \ov{b}_0 \ra_\B$: %restricted to $\ov{b}_0$:
Since $fg$ is an $\LA$-embedding 
%of structures 
and we assumed $\ov{b} = \la \ov{b} \ra_\A$, it must be that $\ov{b}_1 = \la \ov{b}_1 \ra_\A$. %(because structures are closed under the interpretation of function symbols.)  
Thus $\ov{c}_1 \subseteq \ov{b}_1$ so
there is some $\ov{c} \subseteq \ov{b}$ such that $g(\ov{c}) = \ov{c}_0$ and $f(\ov{c}_0) = \ov{c}_1$, and therefore $f^{-1}(\ov{c}_1)=g(\ov{c}) \subseteq \ov{b}_0$.
By the embedding property of semi-retractions, 
\begin{eqnarray}\label{0.5}
\ov{c} \sim_{\A} \ov{c}_1.
\end{eqnarray}

The equations \eqref{0} and \eqref{0.5} imply that:
\begin{eqnarray}\label{1}
\ov{c} \sim_{\A} \ov{a}.
\end{eqnarray}

\noindent Thus, by the qftp-respecting property of semi-retractions: 
\begin{eqnarray}\label{2}
\ov{c}_0 = g(\ov{c}) \sim_\B g(\ov{a}) = \ov{a}_0
\end{eqnarray}

\noindent By \eqref{2}, we may conclude that $\ov{c}_0=f^{-1}(\ov{c}_1) \sim_\B \ov{a}_0$, as desired.  %We summarize these facts in the diagram below.

%\begin{center}
%\begin{tikzcd}
%\ov{a}  \arrow[d,dash,dashed,"\eqref{1}"] \arrow[r,mapsto,"g"] &\ov{a}_0 \arrow[r,mapsto,"f"] %\arrow[d,dash,dashed,"\eqref{2}"]& \ov{a}_1 \arrow[d,mapsto]\\
%\ov{c} \arrow[rr,dash,dashed,bend right=50,"\eqref{0.5}"] 
%\arrow[r,mapsto,"g"]  &\ov{c}_0 \arrow[r,mapsto,"f"] & \ov{c}_1 \arrow[llu,dash,dashed,controls={+(-2,3) and +(-1,1)},"\eqref{0}"] 
%\end{tikzcd}
%\end{center}

To complete the proof, fix any $\ov{b}_0'$ from $\B$ such that 
\begin{eqnarray}\label{41}
\ov{b}_0' \sim_\B \ov{b}_0 .
\end{eqnarray}
\noindent Since $\ov{b}_0' \sim_\B \ov{b}_0$, $f(\ov{b}_0') \sim_\A f(\ov{b}_0)=\ov{b}_1$ and so $f(\ov{b}_0')$ inherits $\ov{b}_1$'s property of enumerating a substructure of $\A$, i.e. $\la f(\ov{b}_0') \ra_{\A}=f(\ov{b}_0')$.

\noindent Fix any $\ov{e}_1 \subseteq \ov{b}_1'=\la f(\ov{b}_0') \ra_{\A}=f(\ov{b}_0')$ such that $\ov{e}_1 \sim_\A \ov{a}$.  Then there exists $\ov{e}_0:=f^{-1}(\ov{e}_1) \subseteq \ov{b}_0'$. 
%
%By the definition of the $\{\ov{d}_i\}_{i<\alpha}$ there exists some $j<\alpha$ such that $\ov{e}_0 \sim_\B \ov{d}_j$.
The similarity \eqref{41} guarantees the existence of some $\ov{e}_0' \subseteq \ov{b}_0$ (on the same coordinates as $\ov{e}_0 \subseteq \ov{b}_0'$) such that $\ov{e}_0' \sim_\B \ov{e}_0$.
%Since this $\ov{e}_0' \sim_\B \ov{d}_j$, for some $j<\alpha$, it must be that $j=0$.  
By the qftp-respecting property for semi-retractions, $f(\ov{e}_0') \sim_\A f(\ov{e}_0) = \ov{e}_1 \sim_\A \ov{a}$, and we just argued that $\ov{b}_0$ has the restricted inverse images under $f$ property for $\ov{a}$ witnessed by $\ov{a}_0$, so $\ov{e}_0' \sim_\B \ov{a}_0$,
thus $\ov{e}_0 \sim_\B \ov{a}_0$, as desired.
%for any $\ov{c}_1$ in $\la f(\ov{b}_0) \ra_{\A}$ such that $\ov{c}_1 \sim_{\A} \ov{a}$, $f^{-1}(\ov{c}_1) \sim_{\B} \ov{a}_0$.
\end{proof}

Now we are ready to adapt Proposition \ref{prop_fcn_restricted} to the case of relational signatures.

%%%%%%%%%%%%%%PROOF OF
\begin{proof}[Proof of Proposition \ref{prop_rlnl_restricted}]   Let $\A, \B, f, g, A, B, A_0, B_0, A_1$ be as in the statement.  Let $\ov{a}, \ov{b}$ be enumerations of $A, B$, respectively, and let $\ov{a}_0 := g(\ov{a}), \ov{b}_0:=g(\ov{b}), \ov{a}_1:=f(\ov{a}_0)$.  Clearly $\ov{a}_0, \ov{b}_0, \ov{a}_1$ enumerate $A_0, B_0, A_1$, respectively, and $|\ov{a}|=|\ov{a}_0|$.  Fix any $B_0' \cong_{\LB} B_0$ and fix an isomorphism $\sigma : B_0 \raw B_0'$ and let $\ov{b}_0' :=\sigma(\ov{b}_0)$.  By Proposition \ref{prop_fcn_restricted}, $\ov{b}_0'$ has the restricted inverse images under $f$ property for $\ov{a}$ witnessed by $\ov{a}_0$.  Clearly, this implies that $B_0'$ has the relational restricted inverse images under $f$ property for $A$ witnessed by $A_0$.
\end{proof}

Having pointed out this technical property, we can deduce the relational case with little machinery.

%%%%%%%%%%%%%%PROOF OF
\begin{proof}[Proof of Theorem \ref{fmla_free_relational}] Fix structures $\A, \B$ in relational signatures and $A\in\age(\A)$.
%Assume that $\age(\B)$ consists of rigid elements.
Fix the pair of maps $g: \A \raw \B$ and $f: \B \raw \A$ witnessing  that $\A$ is a semi-retract of $\B$.
Assume that $A_0:=g(A)\in\age(\B)$ has Ramsey degree $d$ in $\age(\B)$. Note that $A_0$ is a substructure of $\B$ since the signature of $\B$ is relational.%(we will show $\A$ has RP.)

Let $B\in\age(\A)$ and let $c: {\A \choose A} \raw k$ be a coloring.
%Fix finite structures $A, B \subseteq \A$ and a coloring $c: {\A \choose A} \raw 2$. 
%
Denote $B_0:=g(B), A_1:= f(A_0), B_1:=f(B_0)$.  %Since the signatures are relational, $A_0$ and $B_0$ are finite substructures of $\B$. 
%and $A, B, A_1, B_1$ are finite substructures of $\A$.  
%Let $\ov{b}_0, \ov{a}, \ov{a}_0$ be any enumerations of $B_0, A, A_0$.
%
Define an induced coloring $c_0 : {\B \choose A_0} \raw k$ 
by $c_0(A_0'):=c(f(A_0'))$.
This coloring is well-defined by the qftp-preserving property of semi-retractions.

Since $d(A_0,\age(\B))=d$, there exists a copy $B_0'$ of $B_0$ such that $c_0$ takes at most $d$ colors on $B_0'$. %Thus, there exists $d<2$ such that $c_0(A_0')=d$ for all $A_0' \cong_{\LB} A_0$ in $B_0'$.
We will argue that $c$ takes at most $d$ colors on $B_1':=f(B_0') \subseteq \A$. 
%is a copy of $B$ homogeneous for the coloring $c$.

%First, note that since 
As $B_0' \cong_{\LB} B_0$, we have that $B_1' \cong_{\LA} B_1 (\cong_{\LA} B)$ by the qftp-respecting property of semi-retractions.
Fix any copy $A_1'$ of $A$ in $B_1'$, and let $A_0':=f^{-1}(A_1') \subseteq B_0'$.
By Proposition \ref{prop_rlnl_restricted}, $B_0'$ has the relational restricted inverse images under $f$ property for $A$ witnessed by $A_0$, thus, $A_0' \cong_{\LB} A_0$.  But then $A_0'$ is a copy of $A_0$ in $B_0'$, so $c_0(A_0')=c(f(A_0'))=c(A_1')$.
This proves the claim.
\end{proof}

%%%%%%%%%%%%%%%%%%%%%%%%%%%%%%%%%%%%%%%%%%%%%%%%%%%%%%%%%%
\section{Semi-retractions and categorical notions}\label{category}
In the next two sections, we point out similarities between semi-retractions and two category theoretic notions -- pre-adjunctions and retractions.

\subsection{Pre-adjunctions}\label{pread} 
Dragan Ma\v{s}ulovi\'c realized in \cite{mas18} that a categorical notion of pre-adjunction was implicitly used in coding one Ramsey problem into another in the book \cite{pr13} by Pr\"oml. The notions of pre-adjunction and semi-retraction appear to be closely related in the usual setting of classes of finitely-generated structures. We show that every semi-retraction defines a pre-adjunction and that in some instances pre-adjunctions define semi-retractions. For a category $\C$, we denote by $\obj(\C)$ its objects and by $\hom_\C(A,B)$ the collection of morphisms between objects $A, B \in \obj(\C)$.

\begin{dfn} Let $\C$ and $\D$ be categories and let $F:\obj(\D)\to\obj(\C)$ and \\$G:\obj(\C)\to \obj(\D)$ be maps on objects. We say that $(F,G)$ is a \emph{pre-adjunction} if for every $A\in\obj(\D)$ and $C\in\obj(\C)$ we have a map
	\[
	\Phi_{A,C} : \hom_\C(F(A),C)\to \hom_\D(A,G(C)),
	\]
	such that 
	\[\forall A,B\in \obj(\D)\;  \forall C\in\obj(\C)\;  \forall v\in \hom_\D(A,B)\;  \forall \psi\in \hom_\C(F(B),C)\]
\[
\exists w\in\hom_\C(F(A),F(B))\textrm{~such that~} \Phi_{A,C}(\psi\circ w)=\Phi_{B,C}(\psi)\circ v.
\]
\end{dfn}

We state a version of Ma\v{s}ulovi\'c's result restricted to our setting.

\begin{thm}[\cite{mas18}]\label{masresult}  Let $\C$ and $\D$ be categories of finite structures with embeddings as morphisms.  Assume that $F: \obj(\D) \rightleftarrows \obj(\C) : G$ is a pre-adjunction and that $\C$ has the Ramsey property for embeddings.  Then $\D$ has the Ramsey property for embeddings.
\end{thm}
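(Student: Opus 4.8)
\textbf{Proof plan for Theorem \ref{masresult}.}

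The plan is to unwind the pre-adjunction condition into a direct verification of the Ramsey property for embeddings in $\D$. Fix objects $A, B \in \obj(\D)$ and an integer $r \geq 2$; I must produce $D \in \obj(\D)$ so that every $r$-coloring of $\emb(A,D)$ is constant on $h \circ \emb(A,B)$ for some $h \in \emb(B,D)$. The natural candidate is $D := G(C)$, where $C \in \obj(\C)$ is chosen, using the Ramsey property for embeddings in $\C$, to satisfy the partition relation for the pair $(F(A), F(B))$ and $r$ colors — that is, $C \xlongrightarrow{e} (F(B))^{F(A)}_r$ in the category-theoretic sense. (One small point to address: we need $F(A), F(B), C$ to be genuine objects of $\C$ for this to make sense, which they are since $F$ maps $\obj(\D)$ into $\obj(\C)$; the Ramsey property for embeddings of $\C$ is exactly the statement that such a $C$ exists.)

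The key step is the transfer of a coloring of $\emb(A, G(C))$ back to a coloring of $\emb(F(A), C)$. Given an $r$-coloring $\chi : \emb(A, G(C)) \to r$, I would use the maps $\Phi_{A,C} : \hom_\C(F(A), C) \to \hom_\D(A, G(C))$ to pull $\chi$ back, defining $\bar\chi := \chi \circ \Phi_{A,C}$ on $\hom_\C(F(A),C) = \emb(F(A),C)$. By the Ramsey property of $\C$, there is $\psi \in \emb(F(B), C)$ with $\bar\chi$ constant on $\psi \circ \emb(F(A), F(B))$. I then set $h := \Phi_{B,C}(\psi) \in \hom_\D(B, G(C)) = \emb(B, G(C))$ as the sought-after copy of $B$ in $D = G(C)$. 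The remaining task is to check that $\chi$ is constant on $h \circ \emb(A,B)$: take any $v \in \emb(A,B)$; the pre-adjunction condition gives $w \in \hom_\C(F(A),F(B))$ with $\Phi_{A,C}(\psi \circ w) = \Phi_{B,C}(\psi) \circ v = h \circ v$. Hence $\chi(h \circ v) = \chi(\Phi_{A,C}(\psi \circ w)) = \bar\chi(\psi \circ w)$, and since $\psi \circ w \in \psi \circ \emb(F(A), F(B))$, this value does not depend on $v$. Thus $\chi$ is constant on $h \circ \emb(A,B)$, as required.

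I also need to confirm $d_e$-style bookkeeping is trivial here: since the target is a single color class, no degree beyond $1$ appears, so this establishes $\D \xlongrightarrow{e} (B)^A_r$ directly, which — since $A, B, r$ were arbitrary — is exactly the Ramsey property for embeddings of $\D$. The main obstacle I anticipate is purely organizational: making sure that the morphism sets $\hom_\C(F(A),C)$ and $\hom_\D(A,G(C))$ really are the embedding sets $\emb(F(A),C)$ and $\emb(A,G(C))$ (which holds because the categories have embeddings as morphisms), and that $w$ supplied by the pre-adjunction lands in $\emb(F(A),F(B))$ so that $\psi \circ w$ genuinely lies in the homogeneous set for $\bar\chi$. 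There is no heavy computation; the content is entirely in the correct application of the defining diagram of a pre-adjunction with the substitution $\psi \mapsto \psi$, $v \mapsto v$, and reading off $w$.
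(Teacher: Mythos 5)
Your proof is correct and is exactly the standard argument: pull the coloring of $\emb(A,G(C))$ back along $\Phi_{A,C}$, apply the Ramsey property of $\C$ to the pair $(F(A),F(B))$, and use the defining diagram of the pre-adjunction to see that $h:=\Phi_{B,C}(\psi)$ works, the key point being that although $w$ may depend on $v$, the element $\psi\circ w$ always lies in the monochromatic set $\psi\circ\emb(F(A),F(B))$. The paper quotes this theorem from \cite{mas18} without reproducing a proof, and your argument is the one given there.
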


\subsection{Pre-adjunctions from semi-retractions}
\begin{thm}\label{adj}
	Any semi-retraction $(g,f)$ between $\A$ and $\B$ defines a pre-adjunction between the categories of finite tuples of $\A$ and $\B,$ respectively, with qftp-preserving injections. 
\end{thm}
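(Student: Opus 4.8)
The goal is to package the data of a semi-retraction $(g,f)$ as a pre-adjunction. First I would fix the two categories: let $\D$ be the category whose objects are the finite tuples $\ov a$ from $\A$ and whose morphisms $\ov a \to \ov b$ are qftp-preserving injections (in the sense that such a map sends $\ov a$ to a subtuple of $\ov b$ realizing the same quantifier-free type over the image, or more simply composes with the relevant inclusion), and similarly let $\C$ be the category of finite tuples from $\B$ with qftp-preserving injections. On objects, define $F := g$ (sending a finite tuple $\ov a$ from $\A$ to $g(\ov a)$ from $\B$) and $G := f$ (sending a finite tuple $\ov c$ from $\B$ to $f(\ov c)$ from $\A$). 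The heart of the construction is the family of maps $\Phi_{\ov a,\ov c}:\hom_\C(g(\ov a),\ov c)\to\hom_\D(\ov a, f(\ov c))$: given an embedding $\psi: g(\ov a)\to \ov c$ in $\C$, I would set $\Phi_{\ov a,\ov c}(\psi) := f(\psi)\circ (fg\uphp\ov a)$, using the notation $f(\psi)$ for the pushforward of a map along the injection $f$ that is introduced in the Preliminaries, together with the fact that $fg$ is an embedding (the composition property of semi-retractions) so that $fg\uphp \ov a : \ov a \to f(g(\ov a))$ is a morphism in $\D$. I would need to check $\Phi_{\ov a,\ov c}(\psi)$ really lands in $\hom_\D(\ov a, f(\ov c))$: the map $f(\psi): f(g(\ov a)) \to f(\ov c)$ is qftp-preserving because $\psi$ is qftp-respecting into $\B$ and $f$ is qftp-respecting into $\A$ (so $f(\psi)$ preserves $\sim_\A$), and precomposing with the embedding $fg\uphp\ov a$ keeps it a qftp-preserving injection into $f(\ov c)$.

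Next I would verify the defining square of a pre-adjunction. Fix $\ov a,\ov b\in\obj(\D)$, $\ov c\in\obj(\C)$, a morphism $v\in\hom_\D(\ov a,\ov b)$ and $\psi\in\hom_\C(g(\ov b),\ov c)$; I must produce $w\in\hom_\C(g(\ov a),g(\ov b))$ with $\Phi_{\ov a,\ov c}(\psi\circ w)=\Phi_{\ov b,\ov c}(\psi)\circ v$. The natural candidate is $w := g(v): g(\ov a)\to g(\ov b)$, which is qftp-preserving because $v$ is qftp-respecting into $\A$ and $g$ is qftp-respecting into $\B$ — exactly the qftp-respecting property of semi-retractions applied twice. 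With this choice the left-hand side becomes $\Phi_{\ov a,\ov c}(\psi\circ g(v)) = f(\psi\circ g(v))\circ(fg\uphp \ov a) = f(\psi)\circ f(g(v))\circ (fg\uphp \ov a)$, using functoriality of the pushforward $f(-)$ along the injection $f$. Now $f(g(v)) = fg(v)$, and the key commutation identity is that $fg(v)\circ (fg\uphp\ov a) = (fg\uphp\ov b)\circ v$ as maps $\ov a \to f(g(\ov b))$; this is just the statement that $fg:\A\to\A$ is a morphism of structures (an embedding), so applying $fg$ commutes with any inclusion/injection between finite tuples of $\A$ — i.e., the square expressing that $fg$ is a functor-like operation on tuples. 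Substituting gives $f(\psi)\circ (fg\uphp\ov b)\circ v = \Phi_{\ov b,\ov c}(\psi)\circ v$, which is the right-hand side.

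The main obstacle, and the place I would be most careful, is bookkeeping about what a ``morphism of finite tuples'' is and making the pushforward notation $f(\psi)$, $g(v)$ behave the way I want: I need the categories set up so that a qftp-preserving injection $\ov a\to\ov b$ is genuinely a map whose composite with $g$ (resp.\ $f$) is again such a map, and so that $f(\psi\circ w)=f(\psi)\circ f(w)$ holds — this uses that $f$ is a single fixed injection on the underlying set and that the pushforward operation defined in the Preliminaries is functorial, plus that domains/codomains match up because $fg$ maps tuples to tuples generating (or at least spanning) the expected image. A secondary subtlety is the choice between ``tuples'' and ``substructures'': since the statement explicitly says \emph{finite tuples}, I would work with tuples throughout and only invoke the substructure picture informally; this sidesteps any rigidity issue, since qftp-preserving injections between tuples are literally determined coordinatewise. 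Once these conventions are pinned down, every verification above is a short diagram chase using only the qftp-respecting property and the composition property of semi-retractions, so I do not anticipate any genuinely hard step beyond getting the categorical setup precisely right.
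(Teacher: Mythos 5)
Your proposal is correct and follows essentially the same route as the paper: the same categories of finite tuples with qftp-preserving injections, the same object maps $F=g$ and $G=f$, the same formula $\Phi_{\ov a,\ov c}(\psi)=f(\psi)\circ f\circ g$ (your $f(\psi)\circ(fg\uphp\ov a)$ is this map), and the same witness $w=g(v)$ verified by the identical diagram chase using functoriality of the pushforward $h\circ\psi=h(\psi)\circ h$. No substantive differences.
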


\begin{proof}
	Let $\D$ be all finite tuples of $\A$ with qftp-preserving injections as morphisms and let $\C$ be all finite tuples of $\B$ with qftp-preserving injections as morphisms. Define \\$F:\obj(\D)\to \obj(\C)$ by $F(\ov{a})=g(\ov{a})$ and $G:\obj(\C)\to \obj(\D)$ by $G(\ov{c})=f(\ov{c}),$ and we simply let $\Phi_{\ov{a},\ov{c}}:\hom_{\C}(F(\ov{a}),\ov{c})\to \hom_{\D}(\ov{a},G(\ov{c}))$ be defined by $\psi\mapsto f(\psi)\circ f\circ g.$ 
	
	Suppose that $\ov{a},\ov{b}\in\obj(\D)$, $\ov{c}\in\obj(\C)$, $\psi\in\hom_{\C}(g(\ov{b}),\ov{c}),$ and $v\in\hom_{\D}(\ov{a},\ov{b}).$ Let $w\in\hom_{\C}(g(\ov{a}),g(\ov{b}))$ be equal to $g(v)$. Then 
	\begin{eqnarray*}
		\Phi_{\ov{b},\ov{c}}(\psi)\circ v &=& f(\psi)\circ f\circ g\circ v  \\
		&=& f(\psi)\circ f\circ g(v)\circ g  \\
		&=& f(\psi)\circ f(g(v))\circ f\circ g  \\
		&=&  f(\psi\circ g(v))\circ f\circ g= f(\psi\circ w)\circ f\circ g \\
%uses injectivity of f?
		&=&	\Phi_{\ov{a},\ov{c}}(\psi\circ w).		
	\end{eqnarray*}
	
	\begin{multicols}{3}
		\begin{center}
			\begin{tikzcd}[row sep=huge,column sep=huge]
				\ov{a}  %\arrow[d,dash,dashed,"\eqref{1}"]
				\arrow[r,"v"] \arrow[dr,"\Phi_{\ov{a},\ov{c}}(\psi\circ w)"] &\ov{b} %\arrow[r,mapsto,"f"] 
				\arrow[d,dashed, "\Phi_{\ov{b},\ov{c}}(\psi)=f(\psi)"]& %\ov{a}_1 \arrow[d,mapsto]
				\\
				{} & %\ov{c} \arrow[rr,dash,dashed,bend right=50,"\eqref{0.5}"] 
				%\arrow[r,mapsto,"g"]  &
				G(\ov{c}) %\arrow[r,mapsto,"f"]  & \ov{c}_1 \arrow[llu,dash,dashed,controls={+(-2,3) and +(-1,1)},"\eqref{0}"] 
			\end{tikzcd}
		\end{center}
		\columnbreak
		\begin{center}
			\begin{tikzcd}[row sep=huge,column sep=huge]
				g(\ov{a}) \arrow[r,dashed, "w=g(v)"] & g(\ov{b}) \arrow[d,"\psi"]\\
				{} & \ov{c}
			\end{tikzcd}	
		\end{center}
		\columnbreak
		\begin{center}
			\begin{tikzcd}[row sep=huge,column sep=huge]
				fg(\ov{a}) \arrow[r,dashed, "f(w)=fg(v)"] \arrow[dr,"f(\psi\circ g(v))"]& fg(\ov{b}) \arrow[d,"f(\psi)"]\\
				{} & f(\ov{c})
			\end{tikzcd}	
		\end{center}
	\end{multicols}
Therefore $(F,G)$ is a pre-adjunction. 
\end{proof}

\begin{thm}\label{preadjage}
	Let $\A$ and $\B$ be locally finite and let $(g,f)$ be a semi-retraction between $\A$ and $\B$. Then there is a pre-adjunction between $\age(\A)$ and $\age(\B)$ with embeddings as morphisms. 
\end{thm}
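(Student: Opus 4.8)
The plan is to upgrade Theorem~\ref{adj} from the category of finite tuples with qftp-preserving injections to $\age(\A)$ and $\age(\B)$ with embeddings, the main new point being that objects of $\age$ are \emph{generated} substructures rather than tuples. Since $\A$ and $\B$ are locally finite, every object of $\age(\A)$ and of $\age(\B)$ is finite. For $A\in\age(\A)$ I would set $F(A):=\la g(A)\ra_\B$, and for $C\in\age(\B)$ I would set $G(C):=\la f(C)\ra_\A$; these are finitely generated substructures, hence finite (here I use local finiteness of $\B$, resp.\ of $\A$), so $F$ and $G$ are maps on objects in the required directions. For an embedding $\psi\in\hom_{\age(\B)}(F(A),C)$ I would define $\Phi_{A,C}(\psi):A\to G(C)$ by $a\mapsto f(\psi(g(a)))$; this lands in $G(C)$ since $g(a)\in g(A)\subseteq F(A)$, so $\psi(g(a))\in C$ and hence $f(\psi(g(a)))\in f(C)\subseteq\lvert G(C)\rvert$. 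For a morphism $\psi\circ w:F(A)\to C$ the same formula applies, so $\Phi_{A,C}$ is well defined on all of $\hom_{\age(\B)}(F(A),C)$.

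The first substantive step is to check that $\Phi_{A,C}(\psi)$ is genuinely an $\LA$-embedding of $A$ into $G(C)$. Injectivity is immediate since $g,\psi,f$ are injective. For the atomic formulas, the key observation --- essentially the one already used in the proof of Theorem~\ref{thm_transembd} --- is that for every tuple $\ov{a}$ from $A$ the tuples $\ov{a}$ and $f(\psi(g(\ov{a})))$ have the same complete quantifier-free type in $\A$: indeed $\ov{a}\sim_\A fg(\ov{a})$ because $fg$ is an embedding, and $g(\ov{a})\sim_\B\psi(g(\ov{a}))$ because $\psi$ is an embedding between the substructures $F(A)$ and $C$ of $\B$, whence $fg(\ov{a})=f(g(\ov{a}))\sim_\A f(\psi(g(\ov{a})))$ by the qftp-respecting property of $f$. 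Since $A$ and $G(C)$ are substructures of $\A$, this gives $A\vDash\varphi(\ov{a})\Leftrightarrow G(C)\vDash\varphi(\Phi_{A,C}(\psi)(\ov{a}))$ for every atomic $\varphi$, so $\Phi_{A,C}(\psi)$ is an embedding.

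Next I would verify the pre-adjunction condition. Given $A,B\in\age(\A)$, $C\in\age(\B)$, an embedding $v\in\hom_{\age(\A)}(A,B)$, and $\psi\in\hom_{\age(\B)}(F(B),C)$, I would take $w\in\hom_{\age(\B)}(F(A),F(B))$ to be the unique embedding extending the partial map $g(v):g(A)\to g(B)$. Such an extension exists because $g(v)$ both preserves and reflects quantifier-free types: chasing $g(v)(g(\ov{a}))=g(v(\ov{a}))$ through $f$ (qftp-respecting), then $fg$ (an embedding, so it reflects types), then $v$ (an embedding), then $g$ again, shows $g(\ov{a}_1)\sim_\B g(\ov{a}_2)\Leftrightarrow g(v(\ov{a}_1))\sim_\B g(v(\ov{a}_2))$; and a quantifier-free-type preserving-and-reflecting injection between finite subsets of $\B$ extends uniquely to an embedding of the generated substructures $\la g(A)\ra_\B=F(A)\to\la g(B)\ra_\B=F(B)$. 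Then, exactly as in the diagram chase of Theorem~\ref{adj}, for $a\in A$ one computes $\Phi_{A,C}(\psi\circ w)(a)=f(\psi(w(g(a))))=f(\psi(g(v(a))))$ using $w\uphp g(A)=g(v)$, while $(\Phi_{B,C}(\psi)\circ v)(a)=f(\psi(g(v(a))))$, so the two maps agree. This produces the required $w$, and hence $(F,G)$ is a pre-adjunction.

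The part I expect to require the most care is the second step together with the extension of $g(v)$: since the signatures may contain function symbols, $g(A)$ and $f(C)$ need not be substructures, so one must pass to the generated substructures $F(A)=\la g(A)\ra_\B$ and $G(C)=\la f(C)\ra_\A$ and argue that the relevant maps extend to honest embeddings --- in particular that they \emph{reflect} atomic formulas, which is precisely where one needs $fg$ (and $v$) to be genuine embeddings rather than merely qftp-respecting injections, and where local finiteness is used to keep $F(A)$ and $G(C)$ finite. When $\A$ and $\B$ are relational all of these points are automatic, and the argument becomes a direct translation of Theorem~\ref{adj}.
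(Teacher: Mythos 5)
Your proposal is correct and takes essentially the same route as the paper: the paper likewise sets $F(A)=\la g(A)\ra_\B$, $G(C)=\la f(C)\ra_\A$, defines $\Phi_{A,C}(\psi)=f(\psi\uphp g(A))\circ f\circ g$ (i.e.\ $a\mapsto f(\psi(g(a)))$), and chooses $w$ to be the unique extension $g(v)'$ of $g(v)$ to $\la g(A)\ra_\B$, leaving the rest to ``analogous diagram chasing as in the proof of Theorem \ref{adj}.'' You have merely written out the verifications the paper omits (that $\Phi_{A,C}(\psi)$ is an embedding and that $g(v)$ extends to the generated substructures, the latter following most directly from $\ov{a}\sim_\A v(\ov{a})$ together with $g$ being qftp-respecting).
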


\begin{proof}
	The proof goes along the same lines as the proof of Theorem \ref{adj}.
	Let $g:\A\raw\B$ and $f:\B\raw\A$ be a semi-retraction and let $\C$ be $\age(\B)$ with embeddings and let  $\D$ be $\age(\A)$ with embeddings. Define $F: \text{Obj}(\D)\raw \text{Obj}(\C)$ by $F(A)=\left<g(A)\right>_{\B}$ and $G:\obj(\C)\raw\obj(\D)$ by $G(C)=\left<f(C)\right>_{\A}.$ For any $A,B\in \obj(\D)$ and embedding $v:A\to B,$ denote by $g(v)'$ the unique extension of $g(v)$ to $\left<g(A)\right>_{\B}$, and similarly for $C,D\in\obj(\C)$ and embedding $w:C\to D$ we define $f(w)'$ to be the unique extension of $f(w)$ to $\left< f(C)\right>_{\A}.$ Finally, define $\Phi_{B,C}:\hom_{\C}(\left<g(B)\right>_{\B},C)\to \hom_{\D}(B, \left<f(C)\right>_{\A})$ by $\psi\mapsto f(\psi\restriction g(B))\circ f\circ g$. By analogous diagram chasing as in the proof of Theorem \ref{adj}, we can verify that we obtain a pre-adjunction.
	
	\begin{multicols}{2}
		\begin{center}
			\begin{tikzcd}[row sep=huge,column sep=huge]
				A  %\arrow[d,dash,dashed,"\eqref{1}"]
				\arrow[r,"v"] \arrow[dr,"\Phi_{A,C}(\psi\circ w)"] & B %\arrow[r,mapsto,"f"] 
				\arrow[d,dashed, "\Phi_{B,C}(\psi)"]& %\ov{a}_1 \arrow[d,mapsto]
				\\
				{} & %\ov{c} \arrow[rr,dash,dashed,bend right=50,"\eqref{0.5}"] 
				%\arrow[r,mapsto,"g"]  &
				\la f(C) \ra_\A %\arrow[r,mapsto,"f"]  & \ov{c}_1 \arrow[llu,dash,dashed,controls={+(-2,3) and +(-1,1)},"\eqref{0}"] 
			\end{tikzcd}
		\end{center}
		\columnbreak
		\begin{center}
			\begin{tikzcd}[row sep=huge,column sep=huge]
				\la g(A) \ra_\B \arrow[r,dashed, "w=g(v)'"] & \la g(B) \ra_\B \arrow[d,"\psi"]\\
				{} & C
			\end{tikzcd}	
		\end{center}
	\end{multicols}
\end{proof}

\subsection{Semi-retractions from pre-adjunctions}
We now consider the reverse direction - building semi-retractions out of pre-adjunctions.

Suppose that $\A$ and $\B$ are countable structures in relational signatures. Let $\D$ be $\age(\A)$ with embeddings as morphisms and let $\C$ be $\age(\B)$ with embeddings as morphisms. 

\begin{thm}
	Suppose that $(F,G)$ is a pre-adjunction between $\D$ and $\C$ and assume that $F$ and $G$ preserve cardinality. Then there is a semi-retraction between $\A$ and a structure $\B'$ whose age is contained in $\C$. 
\end{thm}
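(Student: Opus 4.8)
The plan is to treat this as a partial converse to Theorem~\ref{preadjage}: there, a semi-retraction $(g,f)$ yielded a pre-adjunction with $F(A)=\la g(A)\ra_\B$ and $G(C)=\la f(C)\ra_\A$, so here the goal is to manufacture, out of $(F,G)$ alone, a structure $\B'$ together with $g:\A\to\B'$ for which $\la g(A)\ra_{\B'}$ is a copy of $F(A)$ for every $A\in\age(\A)$, plus a backward map $f:\B'\to\A$. Since $\A$ is countable with relational signature, enumerate $|\A|=\{a_n:n<\omega\}$ and let $A_n$ be the substructure of $\A$ with underlying set $\{a_0,\dots,a_{n-1}\}$, so that $A_0\subseteq A_1\subseteq\cdots$ are finite substructures with union $\A$ and inclusions $\iota_n:A_n\hookrightarrow A_{n+1}$. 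Applying the pre-adjunction property to $v=\iota_n$, $C=F(A_{n+1})$ and $\psi=\mathrm{id}_{F(A_{n+1})}$ produces embeddings $w_n\in\hom_\C(F(A_n),F(A_{n+1}))$ with $\Phi_{A_n,F(A_{n+1})}(w_n)=\Phi_{A_{n+1},F(A_{n+1})}(\mathrm{id})\circ\iota_n$ (these need not be compatible with whatever $F$ does to longer inclusions, but that is irrelevant for a chain). Let $\B'$ be a suitable direct limit $\varinjlim\bigl(F(A_n),w_n\bigr)$ in $\C$, with colimit embeddings $\lambda_n:F(A_n)\to\B'$; since the signature of $\B$ is relational, every finite substructure of $\B'$ has all its elements in some $\lambda_n(F(A_n))$, hence is isomorphic to a substructure of $F(A_n)\in\C$, so $\age(\B')\subseteq\C$ automatically.

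Next I would extract what the cardinality hypothesis provides. For each $A\in\obj(\D)$ the morphism $\eta_A:=\Phi_{A,F(A)}(\mathrm{id}_{F(A)})\in\hom_\D(A,GF(A))$ is an embedding of finite structures of equal cardinality, hence an isomorphism $A\xrightarrow{\cong}GF(A)$; the same observation applied to a $w$ arising from an isomorphism of $\D$ shows that $F$ and $G$ carry isomorphic structures to isomorphic structures and send isomorphisms to isomorphisms. Because each $w_n$ embeds a structure with $||A_n||$ elements into one with $||A_{n+1}||=||A_n||+1$ elements (so it ``adds a single point''), one can build compatible bijections $\theta_n:A_n\to F(A_n)$ with $\theta_{n+1}\circ\iota_n=w_n\circ\theta_n$, and composing with the $\lambda_n$ assembles a map $g:=\bigcup_n(\lambda_n\circ\theta_n):\A\to\B'$ for which $\la g(A_n)\ra_{\B'}$ is a copy of $F(A_n)$. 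For the qftp-respecting property of $g$ one fixes tuples $\ov a,\ov a'$ from some $A_n$ with $\ov a\sim_\A\ov a'$, views $\ov a\mapsto\ov a'$ as an isomorphism $\la\ov a\ra_\A\to\la\ov a'\ra_\A$ in $\D$, pushes it through the pre-adjunction property, and uses that $F$ sends isomorphisms to isomorphisms to conclude $\theta_n(\ov a)\sim_{F(A_n)}\theta_n(\ov a')$, i.e.\ $g(\ov a)\sim_{\B'}g(\ov a')$.

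The hard part will be constructing $f:\B'\to\A$ and checking both its qftp-respecting property and the composition property $fg\in\emb(\A,\A)$. The subtlety is that $g$ can only be expected to be qftp-respecting and not qftp-preserving --- the signatures differ and $F(A)$ in general carries strictly more quantifier-free structure than $A$ --- so $f$ cannot simply be $g^{-1}$; instead $\B'$ must be taken ``generic'' enough that $f$ can be produced by a universality/back-and-forth argument, just as the second map of a semi-retraction was obtained from universality of the ambient structure in the examples of Section~\ref{counterexample}, while still arranging that $fg$ recovers the system of unit isomorphisms $\eta_{A_n}$ and is therefore an embedding. Concretely, rather than building $\B'$ only from the chain $(A_n)$ I would build it as a \fr-type colimit over the whole diagram $\age(\A)$, choosing every structure map by the pre-adjunction property so that $F(A)$ is realized compatibly as a substructure for each $A\in\age(\A)$; verifying that these choices glue coherently and support the back-and-forth defining $f$ is the main obstacle, and it is exactly there that relationality and cardinality-preservation are indispensable.
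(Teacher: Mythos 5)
Your construction of $\B'$ and $g$ is exactly the paper's: the same chain $A_1\subseteq A_2\subseteq\cdots$ exhausting $\A$ with $||A_i||=i$, the same connecting maps $w_i\in\hom_\C(F(A_i),F(A_{i+1}))$ obtained by applying the pre-adjunction to the inclusion $e_i$ with $\psi=\id_{F(A_{i+1})}$, the same direct limit $\B'=\varinjlim(F(A_i),w_i)$ together with the relational-signature argument that $\age(\B')\subseteq\C$, the same level-by-level bijection $g$ sending the new point of $A_{i+1}$ to the single new point of $F(A_{i+1})\setminus w_i(F(A_i))$, and the same observation that cardinality preservation makes each unit $a_i:=\Phi_{A_i,F(A_i)}(\id_{F(A_i)})$ an isomorphism $A_i\cong GF(A_i)$.

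The gap is where you stop. The paper needs no genericity, no back-and-forth, and no rebuilding of $\B'$ as a colimit over all of $\age(\A)$: it simply sets $f(b):=a_i(g^{-1}(b))$, i.e.\ $f$ is $g^{-1}$ composed with the unit isomorphisms you already constructed. Since $b_i:=a_{i+1}\circ e_i\circ a_i^{-1}$ embeds $GF(A_i)$ into $GF(A_{i+1})$ and $\bigcup_i(GF(A_i),b_i)\cong\A$ via $\bigcup_i a_i$, the composite $fg$ restricts to $a_i$ on each $A_i$, so $fg$ is an embedding of $\A$ into (a copy of) $\A$ for free; the composition property of a semi-retraction costs nothing. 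Your stated reason for rejecting $f=g^{-1}$ is also inverted: if $F(A)$ carries strictly more quantifier-free structure than $A$, then $\sim_{\B'}$ is a finer equivalence than the pullback of $\sim_\A$, which makes the qftp-respecting condition for the \emph{backward} map easier, not harder --- the delicate direction is the qftp-respecting property of $g$ itself. (Your sketch for that direction, pushing an isomorphism $\la\ov a\ra_\A\to\la\ov a'\ra_\A$ through the pre-adjunction, does not actually reach the tuples $\theta_n(\ov a),\theta_n(\ov a')$ inside $F(A_n)$, since a pre-adjunction imposes no coherence between $F(P)\to F(A_n)$ and the chain maps; the paper is equally silent on this verification, so the decisive difference between your write-up and the paper's is that you leave $f$ unconstructed and flagged as the main obstacle, whereas the paper obtains it in one line from the units.)
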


\begin{proof}
Let $\A=\bigcup_{i=1}^\infty A_i$ be an increasing enumeration of $\A,$ where each $||A_i||=i.$ For any $e_i:A_i\subset A_{i+1}$ we consider the identity morphis, $\id:F(A_{i+1})\raw F(A_{i+1})$ to obtain $w_i:F(A_i)\raw F(A_{i+1})$ by the definition of pre-adjunction satisfying the following:
$$\Phi_{A_i,F({A_{i+1}})}(w_i)=\Phi_{A_i,F({A_{i+1}})}(w_i\circ \id_{F(A_{i+1})})=\Phi_{A_{i+1},F({A_{i+1}})}(\id_{F(A_{i+1}}))\circ e_i.$$

We will define $g:\A\to \B'$ for some $\B'$ with $\age$ included in $\age(\B)$ by $g(A_1)=F(A_1)$ and $g(A_{i+1}\setminus A_i)=F(A_{i+1})\setminus w_i(F(A_i))$. %[This way $g$ will be onto.]

\begin{multicols}{2}
		\begin{tikzcd}[row sep=huge,column sep=huge]
			A_i  %\arrow[d,dash,dashed,"\eqref{1}"]
			\arrow[r,"e_i"] \arrow[d,"a_i=\Phi_{A_i,F(A_i)}(\id_{F(A_i)})"] & A_{i+1} %\arrow[r,mapsto,"f"] 
			\arrow[d, "a_{i+1}"]& %\ov{a}_1 \arrow[d,mapsto]
			\\
			GF(A_i)  %\ov{c} 
			\arrow[r,"b_i"] &
			%\arrow[r,mapsto,"g"]  &
			GF(A_{i+1}) %\arrow[r,mapsto,"f"]  & \ov{c}_1 \arrow[llu,dash,dashed,controls={+(-2,3) and +(-1,1)},"\eqref{0}"] 
		\end{tikzcd}
	\columnbreak
	\begin{center}
		\begin{tikzcd}[row sep=huge,column sep=huge]
			F(A_i) \arrow[r,dashed, "w_i"] & F(A_{i+1}) \arrow[d,"\id_{F(A_{i+1})}"]\\
			{} & F(A_{i+1})
		\end{tikzcd}	
	\end{center}
\end{multicols}

Since $F$ and $G$ are cardinality preserving, we have for  every $A_i$ that $a_i:=\Phi_{A_i, F(A_i)}(\id_{F(A_i)})$ is an isomorphism from $A_i$ to $GF(A_i).$ It follows that $b_i=a_{i+1}\circ e_i\circ a_i^{-1}$ is an embedding from $GF(A_i)$ into $GF(A_{i+1})$ and that $\bigcup_{i=1}^{\infty}(GF(A_i),b_i)\cong \A$ via $\bigcup_{i=1}^{\infty} a_i$.% [verify everything commutes]

We define $f:\B'\to \A$ by $f(b)=a$ iff $a_i(g^{-1}(b))=a.$
%\begin{center}
%	\begin{tikzcd}[row sep=huge,column sep=huge]
%	A_i  %\arrow[d,dash,dashed,"\eqref{1}"]
%	\arrow[r,"e_i"] \arrow[d,"g"]\arrow[dd, controls={+(-1,0) and +(-1,1)},"a_i"] & A_{i+1} %\arrow[r,mapsto,"f"] 
%	\arrow[d, "g"] \arrow[dd,controls={+(1,0) and +(1,1)},   "a_{i+1}"] & %\ov{a}_1 \arrow[d,mapsto]
%	\\
%	F(A_i)  %\ov{c} 
%	\arrow[r,"b_i"] 
%	\arrow[d,mapsto,"f"]  &
%	F(A_{i+1}) \arrow[d,mapsto,"f"]\\  %& \ov{c}_1 \arrow[llu,dash,dashed,controls={+(-2,3) and +(-1,1)},"\eqref{0}"] 
%	 GF(A_i) \arrow[r,"w_i"] &GF(A_{i+1})
%\end{tikzcd}
%\end{center}
%We get that $f,g$ are surjective, so $\A$ is a reduct of $\B'$ by Remark \ref{rmk_simple}.
\end{proof}

%In Section \ref{counterexample}, we were able to obtain a semi-retraction out of a pre-adjunction between finite ordered graphs and finite ordered Boolean algebras from \cite{mas18}. In order to understand the relative strength of these two notions to transfer Ramsey properties, we pose the following question.

\begin{qu}
	Let $\A$ and $\B$ be (locally finite) structures and suppose that there is a pre-adjunction between $\age(\A)$ and $\age(\B)$ with embeddings as morphisms. Under which conditions is there a semi-retraction between $\A$ and $\B$? 
\end{qu}

%\section{Semi-Retractions, Retractions and Interpretations}
\section{Concluding Remarks}\label{conclusion}

In this section we gather together some concepts and prior work related to semi-retractions.  From a model-theoretic perspective, it is natural to ask what is the relationship of semi-retractions to interpretations, and we approach this question in Section \ref{7.1}.  In Section \ref{7.3} we cite work that has addressed the question of when interpretations preserve RP.  

The notion of a retraction is defined generally for categories, which we quote from \cite{berg}.
\begin{dfn} Given a category $\CC$ and morphisms $f \in \CC(X,Y)$ and $g \in \CC(Y,X)$, if $fg = \id_{Y}$, i.e.
$$Y \xrightarrow{g} X \xrightarrow{f} Y \text{\ and \ } Y \xrightarrow{fg=\id} Y$$
%and
%$$Y \xrightarrow{fg=\id} Y$$
we say that the pair of maps $(g,f)$ %[my choice of order] 
is a \emph{retraction} of $X$ onto $Y$ and that $Y$ is the \emph{retract} of $X$ (via $f$ and $g$).
\end{dfn}
\noindent The term ``retraction" in Definition \ref{AZ} implies the existence of a retraction in the category of topological groups.
In Section \ref{7.2} we indicate a retraction that exists in the category of products of types spaces $(S^\A_n(\emptyset))_{n<\omega}$ for structures $\A$ that have the property \texttt{rqe}, which retraction is induced by any semi-retraction.

The fact that pairs of interpretations (under certain conditions) induce a retraction in a distinct category from the category associated to semi-retractions suggests that these are different concepts.  We also give an example of an interpretation map that is not qftp-respecting in Example \ref{not_sr}.

%

%Our notion of retraction was named as such because of a superficial resemblance to a different notion of retraction given in \cite{ahzi86} which we will explain below. 
%Given a structure $\A$ we will use the roman letter $A$ to denote the underlying set of $\A$ where this adds clarity.

\subsection{Interpretations and the Ramsey property}\label{7.1} The results presented in this subsection are well-known.  

\begin{dfn}\label{interp}
Given two structures $\A, \B$ a \emph{(finitary) interpretation} of $\B$ in $\A$ denoted by $f: \A \rightsquigarrow \B$ is a surjective function $f: U \raw |\B|$ where $U \subseteq {|\A|}^n$ is 0-definable, $n<\omega$, and for every integer $m$ and $0$-definable $m$-ary relation $R$ in $\B$ (where we include the relation $x_0=x_1$), the set
$$\{(\ov{a}_0,\ldots,\ov{a}_{m-1}) : \B \vDash R(f(\ov{a}_0),\ldots,f(\ov{a}_{m-1})) \} \subseteq |\A|^{n\cdot m}$$
is $0$-definable in $\A$.  In other words, there exists a formula
$\varphi_R(\ov{x}_0,\ldots,\ov{x}_{m-1})$ in the signature of $\A$ such that for all $\ov{a}_i \in U$, for all $i<m$:
$$\A \vDash \varphi_R(\ov{a}_0,\ldots,\ov{a}_{m-1}) \Leftrightarrow \B \vDash R(f(\ov{a}_0),\ldots,f(\ov{a}_{m-1}))$$
\end{dfn}

\begin{rmk} In Definition \ref{interp}:
\begin{itemize}
%\item it suffices to replace the definable relations $R$ with only basic atomic relations, or with only quantifier-free definable relations.
\item We may define $E \subseteq U \times U$ to hold of $(\ov{a}_1,\ov{a}_2)$ if and only if $\B \vDash f(\ov{a}_1) = f(\ov{a}_2)$, in other words, $E:=\varphi_=$.  Then $\B$ is isomorphic to the \emph{induced structure} on $U/E$ (reducted to the signature $\{\varphi_R\}_R$), what is called a relativised reduct (of a definitional expansion) of $\A^{\textrm{eq}}$ in Theorem 5.3.1 of \cite{ho93}.
\item If $\B$ is interpreted in $\A$ such that the $E$-classes in $\A$ (as described above) are singletons, then $\B$ is a \emph{(relativised) reduct} of $\A$.
\end{itemize}
\end{rmk}

In Example \ref{not_sr} we give an example of an interpretation map whose inverse is not qftp-respecting, and thus does not constitute half of a semi-retraction pair.
%%%%%%%%%%%%%%%%%%%%%%
\begin{ex}\label{not_sr}
Let $\A := (\Z,s)$, $\B := (\Z + \Z,s)$, and $f: \A \raw \B$ be the bijection that maps the even numbers of $\A$ onto the ``first'' copy of $\Z$ in $\B$ and the odd numbers of $\A$ onto the ``second'' copy of $\Z$ in $\B$, in the natural way.
%Exploit the fact that $\A$ is not the prime model, so has a non-isolated type: 
Let $p \in S_2^\B(\emptyset)$ be the 2-type $p(x,y):=\{ s^n(x) \neq y: n < \omega\}$.
%\underline{$g$ is not qftp-respecting}: 
Let $g : \B \raw \A$ be defined as the inverse map, $g = f^{-1}$.  The function $g$ maps realizations $(x,y)$ of the type $p$ in $\B$ onto realizations of various $2$-types in $\A$ (namely, all pairs of odd distances) thus $g$ is not qftp-respecting.
%\underline{$g$ interprets $\A$ in $\B$}: 
To see that $f$ interprets $\B$ in $\A$,
we just need to look at the atomic formula $s(x_0) = x_1$.  For all $(a_0,a_1) \in |\A|$,
$$\B \vDash s(f(a_0)) = f(a_1) \Leftrightarrow \A \vDash s(s(a_0)) = a_1 .$$
\end{ex}
%%%%%%%%%%%%%%%%%%%%
%%%%removed

\begin{dfn}\label{homotopic} An interpretation $h: W \rightsquigarrow \A$ for $W \subseteq |\A|^k$ is \emph{homotopic to the identity interpretation on $\A$} if the set 
$\{(\ov{a},b) : h(\ov{a})=b \} \subseteq W \times |\A|$ is 0-definable in $\A$.
\end{dfn}

\begin{rmk} Note that the interpretation $h$ in Definition \ref{homotopic} gives an isomorphism between the induced structure on $W/E$ and $\A$ that is definable in $\A$.
\end{rmk}

\begin{dfn}[\cite{ahzi86}]\label{AZ} Given countable, $\aleph_0$-categorical structures $\A$ and $\B$, $\A$ is a \emph{retraction} of $\B$ if there exist interpretations $f: \A \rightsquigarrow \B$  $g: \B \rightsquigarrow \A$ such that $g \circ f$ is homotopic to the identity interpretation on $\A$.
%%ctbl structures in ctbl lang
\end{dfn}

\begin{dfn}
Given countable $\aleph_0$-categorical structures $\A, \B$, if $\A$ and $\B$ are retractions of one another, then we say they are \emph{bi-interpretable}.
\end{dfn}

In \cite{ahzi86}, Ahlbrandt and Ziegler introduce the $\textbf{Aut}$ functor between the category of countable $\aleph_0$-categorical structures in a countable signature with interpretations as maps and the category of topological groups. The functor $\textbf{Aut}$ associates to each interpretation $f: \A \rightsquigarrow \B$ a continuous homomorphism $\textbf{Aut} f: \Aut(\A) \raw \Aut(\B)$ in the natural way.  
%Moreover, a continuous homomorphism $\varphi: \Aut(\A) \raw \Aut(\B)$ is of the form $\textbf{Aut} f$ if Thus we have the following.
The following result, attributed to T.~Coquand, follows from the work in \cite{ahzi86}.

\begin{thm}[T.~Coquand]\label{coquand}  Given countable $\aleph_0$-categorical structures $\A$ and $\B$,
$\A$ is a retraction of $\B$ iff there are continuous homomorphisms

$$\Aut(\A)  \xrightarrow{\varphi} \Aut(\B)  \xrightarrow{\psi} \Aut(\A)$$
such that $\psi \circ \varphi = 1$.
\end{thm}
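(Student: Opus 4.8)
The plan is to deduce the equivalence from the formal properties of the Ahlbrandt--Ziegler functor $\textbf{Aut}$ introduced just above. I would isolate three facts about $\textbf{Aut}$, all established in \cite{ahzi86} for countable $\aleph_0$-categorical structures in countable signatures: (i) \emph{functoriality up to homotopy} --- $\textbf{Aut}(g \circ f) = \textbf{Aut}(g) \circ \textbf{Aut}(f)$, and $\textbf{Aut}$ sends any interpretation homotopic to the identity interpretation on $\A$ to the identity automorphism $1$ of $\Aut(\A)$; (ii) \emph{faithfulness up to homotopy} --- interpretations $f, f' : \A \rightsquigarrow \B$ satisfy $\textbf{Aut} f = \textbf{Aut} f'$ precisely when $f$ and $f'$ are homotopic; (iii) \emph{fullness} --- every continuous homomorphism $\Aut(\A) \to \Aut(\B)$ equals $\textbf{Aut} f$ for some interpretation $f : \A \rightsquigarrow \B$.

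For the forward implication, I would assume $\A$ is a retraction of $\B$, with witnessing interpretations $f : \A \rightsquigarrow \B$ and $g : \B \rightsquigarrow \A$ such that $g \circ f$ is homotopic to the identity interpretation on $\A$. Set $\varphi := \textbf{Aut} f$ and $\psi := \textbf{Aut} g$; these are continuous homomorphisms by construction of the functor. Then fact (i) gives $\psi \circ \varphi = \textbf{Aut}(g) \circ \textbf{Aut}(f) = \textbf{Aut}(g \circ f) = 1$, as required.

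For the reverse implication, I would start from continuous homomorphisms $\varphi : \Aut(\A) \to \Aut(\B)$ and $\psi : \Aut(\B) \to \Aut(\A)$ with $\psi \circ \varphi = 1$. By fact (iii), pick interpretations $f : \A \rightsquigarrow \B$ and $g : \B \rightsquigarrow \A$ with $\textbf{Aut} f = \varphi$ and $\textbf{Aut} g = \psi$. Fact (i) then yields $\textbf{Aut}(g \circ f) = \psi \circ \varphi = 1 = \textbf{Aut}(\id_\A)$, so by fact (ii) the interpretation $g \circ f$ is homotopic to the identity interpretation on $\A$; that is exactly the assertion that $\A$ is a retraction of $\B$.

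With facts (i)--(iii) granted, the argument is a two-line diagram chase in each direction, so the real work lies entirely in those facts. Fact (iii), the reconstruction/fullness statement, is the main obstacle: it is where one genuinely uses $\aleph_0$-categoricity (via the Ryll-Nardzewski theorem, to guarantee that the orbit-equivalence relations arising from a continuous homomorphism are $0$-definable) together with countability of the signature, while (i) and (ii) are comparatively routine bookkeeping about composition and homotopy of interpretations. Since the theorem is attributed to Coquand as a consequence of \cite{ahzi86}, I would cite those facts directly rather than reprove them.
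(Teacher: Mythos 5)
You should first note that the paper does not actually prove this theorem: it is quoted from \cite{ahzi86} with an attribution to Coquand, so there is no in-paper argument to compare yours against and your proposal must stand on its own. Your forward direction does: functoriality of $\textbf{Aut}$ plus the observation that an interpretation homotopic to $\id_\A$ induces the identity on $\Aut(\A)$ gives $\psi\circ\varphi=1$ at once, and your fact (ii) is likewise correct for $\aleph_0$-categorical structures (if $\textbf{Aut}(h)=1$ for a self-interpretation $h$, its graph is $\Aut(\A)$-invariant, hence $0$-definable by Ryll-Nardzewski).

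The gap is your fact (iii): the functor $\textbf{Aut}$ is \emph{not} full. The actual reconstruction lemma in \cite{ahzi86} says that a continuous homomorphism $h:\Aut(\A)\to\Aut(\B)$ equals $\textbf{Aut} f$ for some interpretation $f:\A\rightsquigarrow\B$ if and only if $h(\Aut(\A))$ has finitely many orbits on $|\B|$; the ``only if'' is forced because the domain $U\subseteq{|\A|}^n$ of a finitary interpretation is a union of finitely many $\Aut(\A)$-orbits and $f$ is $h$-equivariant, so $|\B|=f(U)$ is covered by finitely many $h(\Aut(\A))$-orbits. In particular the trivial homomorphism is never induced by an interpretation when $\B$ is infinite, and---the relevant point---a continuous section $\varphi$ of a continuous split surjection $\psi$ need not have oligomorphic image, so your appeal to fullness to produce $f$ with $\textbf{Aut} f=\varphi$ fails. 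Worse, the failure is not repairable by choosing a cleverer $f$: take $\A$ a countable pure set and $\B$ the disjoint union (with a sort predicate) of a countable pure set and $(\mathbb{Q},<)$, so that $\Aut(\B)\cong S_\infty\times\Aut(\mathbb{Q},<)$, and let $\varphi(\sigma)=(\sigma,1)$ and $\psi$ be the projection. These are continuous with $\psi\circ\varphi=1$, yet no interpretation $f:\A\rightsquigarrow\B$ exists at all, since it would induce a homomorphism from $S_\infty$ into the torsion-free group $\Aut(\mathbb{Q},<)$ whose image has finitely many orbits, whereas every such homomorphism is trivial (each element of $S_\infty$ is a product of two involutions). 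So the converse as literally stated needs an extra hypothesis, e.g.\ that $\varphi(\Aut(\A))$ has finitely many orbits on $|\B|$---a condition that is automatic in the bi-interpretability setting of Corollary \ref{biint}, where $\varphi$ is surjective. The half of the statement that survives without amendment is the one the paper actually uses in Proposition \ref{retract}: $\psi\varphi=1$ forces $\psi$ to be a continuous surjection, whose image therefore has finitely many orbits on $|\A|$, yielding the interpretation $g:\B\rightsquigarrow\A$.
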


\begin{cor}[\cite{ahzi86}]\label{biint} Countable $\aleph_0$-categorical structures
$\A, \B$ are bi-interpretable if and only if their automorphism groups are isomorphic as topological groups.
\end{cor}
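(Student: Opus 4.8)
The plan is to route everything through the Ahlbrandt--Ziegler $\textbf{Aut}$ functor, which sends an interpretation $h:\A\rsa\B$ to a continuous homomorphism $\textbf{Aut}\,h:\Aut(\A)\to\Aut(\B)$. I would use two features of this assignment: it is functorial and invariant under homotopy of interpretations, so that $\textbf{Aut}(h'\circ h)=\textbf{Aut}(h')\circ\textbf{Aut}(h)$ and $\textbf{Aut}$ of the identity interpretation on $\A$ is $1_{\Aut(\A)}$; and --- the substantive input from \cite{ahzi86}, and the mechanism behind Theorem \ref{coquand} --- for $\aleph_0$-categorical structures in countable signatures, $\textbf{Aut}$ induces a \emph{bijection} between homotopy classes of interpretations $\A\rsa\B$ and continuous homomorphisms $\Aut(\A)\to\Aut(\B)$.

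For the forward implication, I would start from a witnessing pair of interpretations $f:\A\rsa\B$ and $g:\B\rsa\A$ with $g\circ f$ homotopic to the identity interpretation on $\A$ and $f\circ g$ homotopic to the identity interpretation on $\B$ (this is what ``$\A$ and $\B$ are retractions of one another'' unpacks to). Setting $\varphi:=\textbf{Aut}\,f$ and $\psi:=\textbf{Aut}\,g$, functoriality together with homotopy-invariance gives $\psi\circ\varphi=\textbf{Aut}(g\circ f)=1_{\Aut(\A)}$ and, symmetrically, $\varphi\circ\psi=1_{\Aut(\B)}$. Hence $\varphi$ is an isomorphism of topological groups with continuous inverse $\psi$, so $\Aut(\A)\cong\Aut(\B)$.

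For the reverse implication, given an isomorphism of topological groups $\theta:\Aut(\A)\to\Aut(\B)$, I would pull $\theta$ and $\theta^{-1}$ back through the reconstruction bijection to interpretations $f:\A\rsa\B$ and $g:\B\rsa\A$ with $\textbf{Aut}\,f=\theta$ and $\textbf{Aut}\,g=\theta^{-1}$. Then $\textbf{Aut}(g\circ f)=\theta^{-1}\circ\theta=1_{\Aut(\A)}=\textbf{Aut}(\id_\A)$, so injectivity of $\textbf{Aut}$ on homotopy classes forces $g\circ f$ to be homotopic to the identity interpretation on $\A$, and symmetrically $f\circ g$ to be homotopic to the identity interpretation on $\B$; thus $\A$ and $\B$ are bi-interpretable. (One could instead apply Theorem \ref{coquand} twice, with $(\theta,\theta^{-1})$ and with $(\theta^{-1},\theta)$, to get that $\A$ is a retraction of $\B$ and that $\B$ is a retraction of $\A$; but matching those two retractions up to a single mutually inverse pair of interpretations still appeals to the uniqueness half of the reconstruction theorem, so this is not really a shortcut.)

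The hard part is contained entirely in the reconstruction statement: that every continuous homomorphism $\Aut(\A)\to\Aut(\B)$ arises from an interpretation, uniquely up to homotopy. This is where $\aleph_0$-categoricity is essential --- by Ryll--Nardzewski the $\emptyset$-definable relations on $\A$ are exactly the finite unions of $\Aut(\A)$-orbits on tuples, so the topological group determines the definable structure, and one must assemble these orbit data coherently across all arities into a single interpretation and verify that the outcome is independent of the choices made, up to homotopy. Granting that (cited from \cite{ahzi86}), everything else above is formal bookkeeping with the functor.
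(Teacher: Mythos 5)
The paper offers no proof of this corollary---it is quoted from \cite{ahzi86} and placed after Theorem \ref{coquand}---and your argument is the standard Ahlbrandt--Ziegler one, so in substance you are taking the intended route: forward direction by functoriality and homotopy-invariance of $\textbf{Aut}$, reverse direction by the reconstruction theorem. The logic is sound as applied.

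One inaccuracy is worth correcting. The reconstruction statement you cite---that $\textbf{Aut}$ gives a bijection between homotopy classes of interpretations $\A\rsa\B$ and \emph{all} continuous homomorphisms $\Aut(\A)\to\Aut(\B)$---is false as stated: for instance, the trivial homomorphism into $\Aut(\B)$ is continuous but is induced by no finitary interpretation when $\B$ is infinite, since an interpretation presents $\B$ as a quotient of a $0$-definable subset of $\A^n$, on which $\Aut(\A)$ acts with finitely many orbits by Ryll--Nardzewski, whereas a trivially-acted-on image would have to be finite. The correct surjectivity statement in \cite{ahzi86} is restricted to continuous homomorphisms whose induced action on the target structure has finitely many orbits (in each arity). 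Your proof survives because you only invoke surjectivity for the isomorphisms $\theta$ and $\theta^{-1}$, where the induced action is the full oligomorphic action of $\Aut(\B)$ (resp.\ $\Aut(\A)$) and the condition is automatic; the faithfulness half, which you use to force $g\circ f$ homotopic to $\id_\A$ and $f\circ g$ homotopic to $\id_\B$, holds without restriction. Your parenthetical remark is also apt: under the literal reading of the paper's definition of bi-interpretability as two separate retraction conditions, two applications of Theorem \ref{coquand} would settle the reverse direction, but then the forward direction would be the delicate one (mutual retracts of topological groups need not be isomorphic), which is further evidence that the single-pair definition you adopt is the one intended.
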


%It follows from the Theorem \ref{coquand} above that 

In the preliminaries we introduced the notion of a definable set.  Here we introduce a generalization.
\begin{dfn}  Fix a structure $\A$, and an integer $n \geq 1$.
A set $X \subseteq {|\A|}^n$ is \emph{quasidefinable} if it is the union of $\Aut(\A)$-orbits of ${|\A|}^n$, where the action of $\sigma \in \Aut(\A)$ on ${|\A|}^n$ is given by $(a_0,\ldots,a_{n-1}) \mapsto (\sigma(a_0),\ldots,\sigma(a_{n-1}))$.
\end{dfn}

\begin{rmk}  Using Scott sentences one can show that any quasidefinable subset of a countable structure $\A$ is definable by an $L_{\omega_1,\omega}$ formula with no parameters.  If $\A$ is countable and $\aleph_0$-categorical, then quasidefinable sets are in fact 0-definable (see \cite{kama94}).
\end{rmk}

%%bigger
\begin{rmk} In Definition \ref{interp}, if ``0-definable'' is replaced with ``quasidefinable''  and the map $f$ is replaced with a countable collection of maps $f_i$ such that equality between $f_i(\ov{u})$ and $f_j(\ov{v})$ is quasidefinable in $\A$ (just as equality between $f(\ov{u})$ and $f(\ov{v})$ must be definable in $\A$ in a finitary interpretation) we may call this an \emph{infinitary interpretation}, see the introductory chapter of \cite{kama94}, ``Models and Groups.''
\end{rmk}

An account due to Kaye in \cite{kama94}, ``Models and Groups'' expands on Corollary \ref{biint} as follows:

\begin{thm}[\cite{kama94}] If $\A, \B$ are countable structures, $\Aut(\A) \cong \Aut(\B)$ as topological groups if and only if $\A, \B$ are infinitarily bi-interpretable.
\end{thm}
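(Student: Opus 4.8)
The plan is to extend the Ahlbrandt--Ziegler $\textbf{Aut}$ functor from finitary to infinitary interpretations and then run the proof of Theorem \ref{coquand} in this wider setting; the substantive part is the reverse implication.

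For the ``if'' direction I would first check that an infinitary interpretation $f:\A\rsa\B$ --- a jointly surjective countable family $f_i:U_i\raw|\B|$ with the sets $U_i\subseteq|\A|^{n_i}$ and the equality relation $\{(\ov u,\ov v):f_i(\ov u)=f_j(\ov v)\}$ all quasidefinable in $\A$ --- induces a continuous homomorphism $\textbf{Aut}(f):\Aut(\A)\raw\Aut(\B)$. The key point is that for countable $\A$ ``quasidefinable'' is the same as ``$\Aut(\A)$-invariant'', so each $\sigma\in\Aut(\A)$ permutes the $U_i$ and respects the equality relation, hence descends via $\bar\sigma(f_i(\ov u)):=f_i(\sigma\ov u)$ to a bijection of $|\B|$ that preserves every quasidefinable, in particular every atomic, relation of $\B$, i.e.\ an automorphism of $\B$. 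I would then verify that $\textbf{Aut}$ is functorial on composites of infinitary interpretations and invariant under homotopy, exactly as in the finitary case, so that for a bi-interpretation $f:\A\rsa\B$, $g:\B\rsa\A$ one gets $\textbf{Aut}(g)\circ\textbf{Aut}(f)=\textbf{Aut}(g\circ f)=\textbf{Aut}(\id)=\id$ and symmetrically, whence $\textbf{Aut}(f)$ is a topological group isomorphism.

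For the ``only if'' direction I would start from a topological group isomorphism $\theta:\Aut(\A)\raw\Aut(\B)$ and build $f:\A\rsa\B$ by hand. Enumerate $|\B|=\{b_k:k<\omega\}$; since $\Aut(\B)_{b_k}$ is open, $U_k:=\theta^{-1}(\Aut(\B)_{b_k})$ is an open subgroup of $\Aut(\A)$, hence contains the stabilizer $\Aut(\A)_{\ov a^{(k)}}$ of some finite tuple $\ov a^{(k)}$ from $\A$. Letting $O_k$ be the $\Aut(\A)$-orbit of $\ov a^{(k)}$, the orbit map and $\theta$ give an $\Aut(\A)$-equivariant surjection $O_k\twoheadrightarrow\Aut(\A)/U_k\cong\Aut(\B)/\Aut(\B)_{b_k}\cong\Aut(\B)\cdot b_k\subseteq|\B|$, and I take $f_k:O_k\raw|\B|$ to be its composite, so that $f_k(\sigma\ov u)=\theta(\sigma)(f_k(\ov u))$; since each $b_k$ is hit, $(f_k)_k$ is jointly surjective. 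This $\theta$-equivariance makes everything needed $\Aut(\A)$-invariant, hence quasidefinable: the equality set, and, for each $\Aut(\B)$-invariant relation $R$ on $\B$, the preimage $\{(\ov u_0,\dots,\ov u_{m-1}):\B\vDash R(f(\ov u_0),\dots,f(\ov u_{m-1}))\}$. So $f$ is an infinitary interpretation, and applying the same recipe to $\theta^{-1}$ produces $g:\B\rsa\A$.

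The final and, I expect, trickiest step is to show $g\circ f$ is homotopic to the identity interpretation on $\A$, and $f\circ g$ to that on $\B$. An element $a\in|\A|$ is coded in $g\circ f$ by concatenating, for some $g$-representative $\ov b$ of $a$, chosen $f$-representatives of the coordinates of $\ov b$; the whole assignment is manifestly $\Aut(\A)$-equivariant --- $\sigma$ sends an $f$-representative of $b$ to one of $\theta(\sigma)(b)$, and $g$ sends $\theta(\sigma)(\ov b)$ to $\sigma(a)$ --- so its graph is $\Aut(\A)$-invariant, hence quasidefinable, which is precisely the homotopy condition. The only genuinely delicate inputs are that every basic open subgroup of $\Aut(\A)$ is a finite-tuple stabilizer, so that the coset spaces above are realized by quasidefinable orbits, and the fact recalled in the remark before the statement that for countable structures the quasidefinable sets coincide with the $L_{\omega_1,\omega}(\emptyset)$-definable ones --- which is what lets us read the construction back as a bona fide infinitary interpretation. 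Putting the pieces together, $\A$ is a retraction of $\B$ and $\B$ of $\A$, i.e.\ they are infinitarily bi-interpretable.
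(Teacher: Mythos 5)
The paper offers no proof of this statement --- it is quoted from Kaye's account in \cite{kama94} as background --- so there is no in-text argument to measure yours against; what you have written is the standard Ahlbrandt--Ziegler/Coquand argument transported to the infinitary setting, and it is essentially correct. The two load-bearing observations are both present and sound: (a) quasidefinable means, by the paper's definition, ``union of $\Aut$-orbits,'' i.e.\ $\Aut$-invariant, so an infinitary interpretation induces a well-defined, continuous homomorphism of automorphism groups exactly as in the finitary case, and homotopy-to-the-identity forces the induced map to be the identity because $h(\sigma\ov a)=\sigma(h(\ov a))$ together with surjectivity of $h$ pins down $\textbf{Aut}(h)(\sigma)=\sigma$; (b) conversely, every open subgroup of $\Aut(\A)$ contains a finite-tuple pointwise stabilizer, so $\theta^{-1}(\Aut(\B)_{b_k})$ sits above some $\Aut(\A)_{\ov a^{(k)}}$, the induced map $O_k\raw \Aut(\B)\cdot b_k$ is well defined and $\theta$-equivariant, and equivariance is precisely what makes the domains, the equality relation, the pullbacks of invariant relations, and the graphs of the composites quasidefinable. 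Two points deserve explicit attention in a full writeup, though neither is a gap: the paper only defines composition and homotopy for finitary interpretations, so you should state the (routine) infinitary versions --- the composite $g\circ f$ is the countable family indexed by a $g$-piece together with a choice of $f$-piece for each coordinate, and homotopy asks that each such graph be quasidefinable; and you should note that well-definedness of $f_k$ on $O_k$ (not just on the coset space) uses $\Aut(\A)_{\ov a^{(k)}}\subseteq U_k$, which you do invoke. With those details spelled out, the argument is complete.
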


%The following is an easy proposition, but it is worth noting.

It is satisfying to see that the following proposition and corollary follow immediately from \cite{kpt05} and \cite{zu16}.

\begin{prop}\label{retract} Let $\A, \B$ be countable $\omega$-homogeneous structures, and suppose that $\A$ is a retraction of $\B$ in the sense of Definition \ref{AZ}.  If $\B$ has finite Ramsey degrees for embeddings, then $\A$ has finite Ramsey degrees for embeddings.
\end{prop}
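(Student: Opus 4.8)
The plan is to route everything through the topological-dynamical dictionary recalled above, specifically Theorem~\ref{coquand} and Theorem~\ref{zucker}.

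First I would extract the relevant group-theoretic data from the hypothesis. Since $\A$ is a retraction of $\B$ in the sense of Definition~\ref{AZ}, Theorem~\ref{coquand} furnishes continuous homomorphisms $\Aut(\A) \xrightarrow{\varphi} \Aut(\B) \xrightarrow{\psi} \Aut(\A)$ with $\psi \circ \varphi = 1$. The only feature I need is that $\psi$ is a \emph{surjective} continuous homomorphism, which is immediate since $\psi\circ\varphi = \id_{\Aut(\A)}$ shows every element of $\Aut(\A)$ is in the image of $\psi$. Write $G_\A := \Aut(\A)$ and $G_\B := \Aut(\B)$; both are Polish groups because $\A$ and $\B$ are countable.

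Next I would transfer metrizability of the universal minimal flow along $\psi$. View $M(G_\A)$ as a $G_\B$-flow via $g\cdot x := \psi(g)x$; this is a continuous action since $\psi$ is continuous. It is minimal as a $G_\B$-flow: any nonempty closed $G_\B$-invariant $Y \subseteq M(G_\A)$ is automatically $G_\A$-invariant because $G_\A = \psi(G_\B)$, hence $Y = M(G_\A)$ by minimality of $M(G_\A)$ as a $G_\A$-flow. By Ellis' universal property of $M(G_\B)$, there is a continuous $G_\B$-equivariant surjection $\pi : M(G_\B) \to M(G_\A)$. Now suppose $\B$ has finite Ramsey degrees for embeddings; then Theorem~\ref{zucker} gives that $M(G_\B)$ is metrizable, so $M(G_\A)$ is a continuous image of a compact metrizable space inside a compact Hausdorff space, hence itself metrizable. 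Applying Theorem~\ref{zucker} in the reverse direction to $\A$ yields that $\age(\A)$ has finite Ramsey degrees for embeddings, i.e.\ $\A$ does, as desired.

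The one point requiring a little care is the final metrizability step, since a continuous surjective image of a compact metrizable space can fail to be metrizable in general; it works here because the target is compact Hausdorff. Concretely, $\pi^{*} : C(M(G_\A)) \to C(M(G_\B))$ is an isometric embedding of Banach spaces, so separability of $C(M(G_\B))$ passes to $C(M(G_\A))$, forcing $M(G_\A)$ to be second countable and hence metrizable. Everything else — surjectivity of $\psi$, and the continuity and minimality of the pulled-back $G_\B$-action on $M(G_\A)$ — is routine once the dynamical correspondence is invoked.
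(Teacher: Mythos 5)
Your proof is correct and follows essentially the same route as the paper: both invoke Theorem \ref{zucker} on each side and transfer metrizability of the universal minimal flow by pulling back the $\Aut(\A)$-action along the surjection $\psi:\Aut(\B)\to\Aut(\A)$, noting that the resulting $\Aut(\B)$-flow is minimal and hence a quotient of the metrizable $M(\Aut(\B))$. The only cosmetic difference is that the paper argues by contradiction with an arbitrary non-metrizable minimal $\Aut(\A)$-flow, whereas you argue directly with $M(\Aut(\A))$ and spell out the (correct) metrizability-of-quotients step via separability of $C(M(\Aut(\A)))$.
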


\begin{proof} Since $\B$ is $\omega$-homogeneous, if $\B$ has finite Ramsey degrees for embeddings then the universal minimal flow  $M(\Aut(\B))$ is metrizable by the reverse direction of Theorem \ref{zucker}.  Let $K$ be the kernel of the quotient $\psi: \Aut(\B) \raw \Aut(\A)$. Any $\Aut(\A)$-flow on $X$ induces an $\Aut(\B)$-flow on $X$ by $fx=(Kf)x$, where $\Aut(\A)$ is identified with the quotient group $\{Kf:f\in \Aut(\B)\}$. If $M$ is a non-metrizable minimal flow of $\Aut(\A),$ it induces a non-metrizable minimal $\Aut(\B)$-flow as above, which is a contradiction. 
We may conclude that $M(\Aut(\A))$ is metrizable, and thus that $\A$ has finite Ramsey degrees  for embeddings by Theorem \ref{zucker}.
\end{proof}

%Is this too colloquial?

\begin{cor}  If $\A, \B$ are countable $\omega$-homogeneous infinitarily bi-interpretable structures, then $\A$ has finite Ramsey degrees (for embeddings) if and only if $\B$ has finite Ramsey degrees (for embeddings).
\end{cor}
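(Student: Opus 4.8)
The plan is to reduce everything to the metrizability of the universal minimal flow and then invoke Zucker's theorem. First I would observe that the hypothesis of infinitary bi-interpretability, together with the theorem of Kaye quoted just above, gives a topological group isomorphism $\Aut(\A) \cong \Aut(\B)$. This is the only place the bi-interpretability assumption enters; everything afterwards is a statement purely about the abstract topological group $G := \Aut(\A) \cong \Aut(\B)$.

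Next I would note that an isomorphism of topological groups induces an equivalence between the categories of $\Aut(\A)$-flows and $\Aut(\B)$-flows, which sends minimal flows to minimal flows and preserves the underlying compact Hausdorff space. Consequently the universal minimal flows correspond: $M(\Aut(\A))$ and $M(\Aut(\B))$ are homeomorphic (indeed isomorphic as flows once we identify the two groups). In particular, $M(\Aut(\A))$ is metrizable if and only if $M(\Aut(\B))$ is metrizable.

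Finally I would apply Theorem \ref{zucker} to each of $\A$ and $\B$, both of which are countable and $\omega$-homogeneous by hypothesis: $\age(\A)$ has finite Ramsey degrees for embeddings $\iff$ $M(\Aut(\A))$ is metrizable $\iff$ $M(\Aut(\B))$ is metrizable $\iff$ $\age(\B)$ has finite Ramsey degrees for embeddings. This chain of equivalences is exactly the assertion of the corollary.

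There is essentially no hard step here: the content is entirely packaged into Kaye's theorem and Zucker's theorem, and the remaining work is the routine observation that metrizability of the universal minimal flow is an invariant of the topological group. The one point that deserves a sentence of care is that the isomorphism $\Aut(\A)\cong\Aut(\B)$ is of \emph{topological} groups, so that it genuinely transports flows (continuity of the actions is preserved); this is immediate but worth stating explicitly, much as in the proof of Proposition \ref{retract} above, where the analogous transfer was carried out by hand for a quotient map rather than an isomorphism.
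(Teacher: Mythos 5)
Your proof is correct and is essentially the argument the paper intends: the corollary is left without an explicit proof there, but it is meant to follow from Kaye's theorem (giving $\Aut(\A)\cong\Aut(\B)$ as topological groups), the observation that metrizability of the universal minimal flow is a topological-group invariant (the two-sided version of the flow-transfer carried out in the proof of Proposition \ref{retract}), and Theorem \ref{zucker} applied to each structure. Your explicit remark that the isomorphism must be topological in order to transport flows is exactly the right point of care, and matches the paper's use of continuity in Proposition \ref{retract}.
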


\begin{rmk}
Note that in the proof of Proposition \ref{retract} we only needed that $\Aut(\A)$ is a quotient of $\Aut(\B).$
\end{rmk}

%%complete:
%\begin{ex} Here we give a specific example of a retraction that illustrates Proposition \ref{retract} above.
%%it could be equality or it could be...something from the paper or Ahlbrandt?
%\end{ex}
%Notice that the equivalence relation is not generic on $\B$, nor can we be guaranteed that for any equivalence relation imposed on a structure $\B$ with the RP, will $\Aut(\B)$ necessarily be extremely amenable, nor will there necessarily be the retraction maps.
%%Lynn: Apr 03

%%%%
\subsection{Related Work}\label{7.3}

It is a natural question to ask under what conditions interpretations transfer the Ramsey property.  
%This question has been approached in many places, including some works that will be highlighted here.
Proposition \ref{bod} gives an example of this type of result.

%sub omega homog
\begin{prop}[Proposition 3.8 in \cite{bod15}]\label{bod}  Given a countable $\omega$-homogeneous $\aleph_0$-categorical structure $\Gamma$ with RP for embeddings, every structure $\M$ with a first-order interpretation in $\Gamma$ has an $\aleph_0$-categorical expansion $\N$ with RP for embeddings.  Furthermore, if $\Gamma$ is $\omega$-homogeneous in a finite relational signature, we can choose $\N$ to be $\omega$-homogeneous in a finite relational signature.
\end{prop}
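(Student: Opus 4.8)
The plan is to route the Ramsey property through extreme amenability by way of Theorem \ref{ea}. Since $\Gamma$ is $\omega$-homogeneous and $\age(\Gamma)$ has the Ramsey property for embeddings, the reformulation (2)$'$ of Theorem \ref{ea} tells us that $G := \Aut(\Gamma)$ is extremely amenable, and the whole point will be to transport this property along the interpretation. Write $f : \Gamma \rsa \M$ for the interpretation, with $0$-definable domain $U \subseteq {|\Gamma|}^k$ and associated $\emptyset$-definable equivalence relation $E = \varphi_{=}$, so that $|\M|$ is identified with $U/E$. The first step is to observe that every $\sigma \in G$ permutes $U$ and preserves $E$ and each pulled-back relation $\varphi_R$, hence descends to a permutation $\phi(\sigma)$ of $|\M|$ which is an automorphism of $\M$; one checks that $\phi : G \to \textrm{Sym}(|\M|)$ is continuous because, for $m = f(\ov{a})$, the set $\{\sigma : \phi(\sigma)(m) = m\}$ is a subgroup of $G$ containing the open pointwise stabilizer of $\ov{a}$. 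Put $H := \phi(G) \le \Aut(\M)$; since any compact $H$-space is a compact $G$-space via $\phi$ and a $G$-fixed point is then $H$-fixed, $H$ (in the subspace topology from $\textrm{Sym}(|\M|)$) is extremely amenable.

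Next I would pass to the closure $\overline{H}$ of $H$ in $\textrm{Sym}(|\M|)$. As $\Aut(\M)$ is closed we get $\overline{H} \le \Aut(\M)$, and the closure of an extremely amenable subgroup is again extremely amenable: given a continuous $\overline{H}$-action on a compact $X$ and a point $x_0$ fixed by $H$, continuity of the action and density of $H$ force $x_0$ to be $\overline{H}$-fixed. So $\overline{H}$ is a closed, extremely amenable subgroup of $\textrm{Sym}(|\M|)$. To recover $\aleph_0$-categoricity I would check that $\overline{H}$ is oligomorphic: the $G$-equivariant surjection ${U}^n \to {|\M|}^n$ sending $(\ov{a}_0,\dots,\ov{a}_{n-1})$ to $(f(\ov{a}_0),\dots,f(\ov{a}_{n-1}))$ shows $H$ has at most as many orbits on ${|\M|}^n$ as $G$ has on ${U}^n \subseteq {|\Gamma|}^{nk}$, which is finite since $\Gamma$ is $\aleph_0$-categorical, and passing from $H$ to $\overline{H}$ does not change orbits on finite tuples (their pointwise stabilizers in $\textrm{Sym}(|\M|)$ are open). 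Then by Ryll--Nardzewski and the Galois correspondence for oligomorphic permutation groups, $\overline{H} = \Aut(\N)$ for the $\omega$-homogeneous $\aleph_0$-categorical structure $\N$ on $|\M|$ whose relation symbols name the orbits of $\overline{H}$ on tuples; because $\overline{H} \le \Aut(\M)$, each basic relation (and function graph) of $\M$ is a finite union of $\overline{H}$-orbits, hence quantifier-free $0$-definable in $\N$, so $\N$ expands $\M$. Finally $\N$ is $\omega$-homogeneous with $\Aut(\N) = \overline{H}$ extremely amenable, so Theorem \ref{ea} gives that $\age(\N)$ has the Ramsey property for embeddings, which is the main assertion.

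For the refinement I would add the hypothesis that $\Gamma$ is $\omega$-homogeneous in a finite relational signature of maximal arity $m$, and argue that one need only keep finitely many relations: the $\overline{H}$-orbit of $(f(\ov{a}_0),\dots,f(\ov{a}_{n-1}))$ is the image of the $G$-orbit of $(\ov{a}_0,\dots,\ov{a}_{n-1})$ in ${|\Gamma|}^{nk}$, which---$\Gamma$ being relational and $\omega$-homogeneous---is determined by the atomic formulas of arity $\le m$ satisfied by those $nk$ coordinates, and each such formula involves coordinates from at most $m$ of the blocks $\ov{a}_i$, hence is already recorded by the $\overline{H}$-orbit of a subtuple of $|\M|$-length $\le m$. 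Consequently the reduct $\N'$ of $\N$ keeping only relations naming $\overline{H}$-orbits on $|\M|$-tuples of length $\le m$ still has $\Aut(\N') = \overline{H}$ and is $\omega$-homogeneous, and by oligomorphy it has finitely many relations of each arity $\le m$, so $\N'$ is the required finite relational expansion. I expect this last bookkeeping to be the main obstacle: it is the delicate part, and is precisely what is carried out in \cite[Prop.~3.8]{bod15}; by contrast, the genuinely load-bearing and easy ingredient here is the translation between the Ramsey property for embeddings and extreme amenability supplied by Theorem \ref{ea}.
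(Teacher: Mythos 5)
First, a point of comparison: the paper does not prove Proposition \ref{bod} at all --- it is quoted from \cite{bod15} and used as a black box in Section \ref{7.3} --- so the only benchmark is the argument in that reference, and for the main assertion your proof follows exactly that route and is correct. The interpretation induces a continuous homomorphism $\phi:\Aut(\Gamma)\to\Aut(\M)$ (your continuity check via pointwise stabilizers is right); extreme amenability, supplied by Theorem \ref{ea} applied to $\Gamma$, passes to the continuous homomorphic image $H$ and then to its closure $\overline{H}$ in $\textrm{Sym}(|\M|)$; oligomorphy transfers along the equivariant surjection $U^n\to|\M|^n$ and is unaffected by passing to the closure; and the canonical structure $\N$ naming the $\overline{H}$-orbits (to which you should explicitly adjoin the symbols of $\M$ so that $\N$ is literally an expansion --- this does not change $\Aut(\N)$ since $\overline{H}\le\Aut(\M)$) is countable, $\omega$-homogeneous and $\aleph_0$-categorical with $\Aut(\N)=\overline{H}$, so Theorem \ref{ea} applied in the reverse direction yields RP for embeddings. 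All of the group-theoretic facts you invoke are standard and correctly deployed.

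The ``furthermore'' clause is where there is a genuine gap, and it is not mere bookkeeping. You assert that the $\overline{H}$-orbit of $(m_0,\dots,m_{n-1})$ is determined by the $\overline{H}$-orbits of its subtuples of length at most $m$, on the grounds that each atomic formula of $\Gamma$ touches coordinates from at most $m$ of the blocks $\ov{a}_i$ of a preimage. But $\ov{m}$ and $\ov{m}'$ lie in the same $\overline{H}$-orbit if and only if \emph{some} preimage of $\ov{m}$ in $U^n$ is $\Aut(\Gamma)$-conjugate to \emph{some} preimage of $\ov{m}'$, so the $\overline{H}$-orbit of a subtuple records only the \emph{set} of quantifier-free types achievable over all preimages of that subtuple --- not the type of the particular preimage blocks you fixed. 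When the $E$-classes of the interpretation are not singletons, the preimages witnessing the $\overline{H}$-equivalence of the various length-$\le m$ subtuples of $\ov{m}'$ need not cohere into a single preimage of the whole tuple realizing the quantifier-free type of your chosen preimage of $\ov{m}$; hence the implication ``all $\le m$-subtuples equivalent $\Rightarrow$ the $n$-tuples are equivalent'' does not follow from what you wrote. Controlling this coherence (e.g.\ via an equivariant choice of representatives of $E$-classes, or a bound depending also on the dimension $k$ of the interpretation) is exactly the delicate half of the argument in \cite{bod15}; as written, your reduction to a finite relational signature is asserted rather than proved.
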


In \cite{sokrel13}, classes of structures interpretable in a fixed class $\K$ of finite structures in some relational signature $L$ are defined and investigated with respect to the Ramsey property.    For example, $\EK$ is defined to be all structures $A$ in the signature $L \cup \{E\}$, where $E \notin L$ is interpreted as an equivalence relation. In the following, by an \emph{ordered expansion} of a structure we mean an expansion by a new relation symbol whose interpretation linearly orders the structure.  Suppose that $\K^*$ is a class of ordered expansions of structures in $\K$, (this context could easily be adapted to the case where $\K$ is ordered by a relation in $L$) and define $\CEKS$ to be all ordered expansions of structures $A \in \EK$ such that the ordering is \emph{convex} (meaning that each equivalence class is an interval with respect to the ordering) and the induced ordered structure $A/E \in \K^*$.  Theorem 4.5 of \cite{sokrel13} states that $\K^*$ has the Ramsey property if and only if $\CEKS$ has the Ramsey property.

The previous works provide an intriguing look into the relationship between the Ramsey property and interpretations, but do not completely settle the question.  Suppose $\A$ is interpreted in $\B$ by way of $\D$, $E$, where $\D$ is a definable subset of $\B$, $E$ is a definable equivalence relation on $\D$, and $\A$ is isomorphic to some reduct of $D / E$.  If $\age(\A) = \K$, and $\age(\B) = \K'$, then the interpretation demonstrates that a subset (the substructures of $D$) of a reduct of $\K'$ coincides with $\EK$.
%, where the new relation symbol $E$ is defined to be the definable relation $E$.
%structures who are in $\A$ mod $E'$ are themselves in $\K'$ to begin with, assuming they were in D.
%$\EK$ is a subset of a reduct of $\K'$
Thus, the Ramsey transfer given by a particular interpretation is related to the question of which reducts of a structure with the Ramsey property have the Ramsey property.
%This would be a hard question.

We have learned that in recent work, a notion weaker than a semi-retraction has been used in Lemma 3.14(iii) of \cite{kamsma2023positive}, which paper generalizes many of the Ramsey transfer results of \cite{kks14} to positive logic.

\subsection{Semi-Retractions and Retractions}\label{7.2}

%%complete:
%The motivation for this section is to compare our notion of retraction to the previously studied notion in Definition \ref{AZ}.  
In the case that $\A, \B$ are countable and $\aleph_0$-categorical, the type spaces are finite, and so a well-defined map between them is continuous.  Under additional assumptions, qftp-respecting maps induce  maps between  type spaces.
%supply

\begin{dfn}\label{rqe} Say that a structure $\A$ has property $\rqe$ (realized, quantifier-eliminable types) if
$$S_n^\A(\emptyset) = \{ \qftp(\ov{a}) : \ov{a} \in {|\A|}^n \}.$$
%textrm{~and $\ov{a}$ enumerates a finite substructure} \} = $$
\end{dfn}

\begin{obs} If $\A$ has $\rqe$ then every $n$-type over the empty set in $\A$ is realized in $\A$.
\end{obs}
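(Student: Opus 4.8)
The plan is to unwind Definition~\ref{rqe}; the only real work is parsing the displayed equality correctly, after which the statement is nearly immediate. A point of $S_n^\A(\emptyset)$ is a \emph{complete} $n$-type and so in general contains formulas with quantifiers, whereas a member of the set $\{\qftp^\A(\ov{a}) : \ov{a} \in {|\A|}^n\}$ is a complete \emph{quantifier-free} $n$-type; thus the equality in property $\rqe$ is to be read through the restriction map $\rho$ sending a complete type $p \in S_n^\A(\emptyset)$ to its quantifier-free part $p \uphp \{\text{quantifier-free $n$-formulas}\}$. Read this way, property $\rqe$ asserts exactly that $\rho$ is a bijection of $S_n^\A(\emptyset)$ onto $\{\qftp^\A(\ov{a}) : \ov{a} \in {|\A|}^n\}$: the injectivity of $\rho$ is the ``quantifier-eliminable'' content (distinct complete types over $\emptyset$ cannot share a quantifier-free reduct), and the description of the range as the set of quantifier-free types actually occurring in $\A$ is the ``realized'' content. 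The first step of the proof is just to make this reading explicit.

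With that in place the argument is a one-line diagram chase. First I would fix an arbitrary $p \in S_n^\A(\emptyset)$ and set $q := \rho(p)$. By property $\rqe$ there is a tuple $\ov{a} \in {|\A|}^n$ with $\qftp^\A(\ov{a}) = q$. Then $\tp^\A(\ov{a}) \in S_n^\A(\emptyset)$ and $\rho(\tp^\A(\ov{a})) = \qftp^\A(\ov{a}) = q = \rho(p)$, so by injectivity of $\rho$ we get $\tp^\A(\ov{a}) = p$; hence $\A \vDash \varphi(\ov{a})$ for every $\varphi \in p$, i.e.\ $\ov{a}$ realizes $p$ in $\A$. The only point demanding care is the preliminary one — settling what the equality in Definition~\ref{rqe} means — and once that is fixed there is no genuine obstacle; in particular, unlike the general situation where realizing a type requires passing to a saturated elementary extension (cf.\ Remark~\ref{shelah}), here the witnessing tuple is handed to us directly by $\rqe$, so no appeal to saturation or compactness is needed.
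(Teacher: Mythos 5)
Your proof is correct and matches the paper's treatment: the paper states this as an unproved Observation precisely because, once the set equality in Definition~\ref{rqe} is read (as you do) through the restriction map from $S_n^\A(\emptyset)$ to quantifier-free types, the conclusion is immediate. Your explicit use of injectivity to upgrade $\qftp^\A(\ov{a})=\rho(p)$ to $\tp^\A(\ov{a})=p$ is exactly the point that makes the realization genuine rather than merely a realization of the quantifier-free part, so nothing is missing.
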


\begin{rmk} If $\A$ is in a finite relational signature and $\Th(\A)$ has  quantifier elimination, then complete quantifier-free types are complete types that are also finite types, so any type realized in an elementary extension is realized in $\A$.  Thus $\A$ has $\rqe$.

More generally, if $\A$ is an $\omega$-homogeneous uniformly locally finite structure in a finite signature, then $\Th(\A)$ is $\aleph_0$-categorical and has quantifier elimination as in Remark \ref{rmk_simple}, thus $\A$ has $\rqe$.
\end{rmk}
%did I write this?

%\begin{obs} It is observed in \cite{berg} that in the category $\CC = \mathbf{Set}$ 
%\begin{itemize}
%\item $g$ as above must be 1-1
%\item $gf: X \xrightarrow{\textrm{onto}} B$ where $\im g = B \subseteq X$
%\item $B \xrightarrow{gf=\id} B$
%\end{itemize}
%\end{obs}

\begin{dfn}\label{inducedmap} Given $\A, \B$ that have \texttt{rqe}, let $g: \A \raw \B$ be a qftp-respecting injection on the underlying sets.  We define $\theta_g: \sna \raw \snb$ to take $p \in \sna$ to $q \in \snb$ such that there exists $\ii \in {|\A|}^n$ satisfying $p(\ov{x})$ and $g(\ii) \in \B^n$ satisfies $q(\ov{x})$ (i.e. $q = \qftp^\B(g(\ii))$).
\end{dfn}

\begin{rmk}  Note that Definition \ref{inducedmap} is well-defined since for any $\ii, \ii'$ satisfying $p(\ov{x})$, $\ii \sim_\A \ii'$ and so $g(\ii) \sim_\B g(\ii')$ and thus $\qftp^\B(g(\ii))=\qftp^\B(g(\ii'))$.
\end{rmk}

\begin{obs}\label{cont} Given countable, $\aleph_0$-categorical $\A, \B$ that have \texttt{rqe}, let $g: \A \raw \B$ be a qftp-respecting injection on the underlying sets.  The map $\theta_g$ as in Definition \ref{inducedmap} is continuous.
\end{obs}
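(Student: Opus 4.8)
The plan is to reduce continuity to a purely topological triviality by exploiting $\aleph_0$-categoricity. First I would invoke the Ryll-Nardzewski theorem: since $\A$ is countable and $\Th(\A)$ is $\aleph_0$-categorical, for each $n \geq 1$ there are only finitely many complete $n$-types over $\emptyset$ consistent with $\Th(\A)$, so $\sna$ is a finite set. The Stone topology on $\sna$ is Hausdorff (indeed compact and totally disconnected), and a finite Hausdorff space carries the discrete topology. Hence $\sna$ is discrete. The same applies to $\snb$, though we will not even need that.

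Once $\sna$ is discrete, every function out of it is continuous, and in particular $\theta_g : \sna \raw \snb$ is continuous; this is exactly what the remark preceding Definition \ref{inducedmap} anticipated. The only place where the hypotheses on $\A$ and $\B$ (countability, $\aleph_0$-categoricity, and $\rqe$) genuinely enter is in making sense of $\theta_g$ as a function in the first place: $\rqe$ guarantees that every $p \in \sna$ is realized by some $\ii \in {|\A|}^n$, so that $\theta_g(p) = \qftp^\B(g(\ii))$ is defined, and the Remark following Definition \ref{inducedmap} records that this value is independent of the choice of $\ii$ (using that $g$ is qftp-respecting). With well-definedness in hand, continuity is automatic.

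I would conclude by noting that the argument in fact establishes the stronger statement that \emph{any} function $\sna \raw \snb$ is continuous under these hypotheses, so the real content of the observation is the well-definedness of $\theta_g$ rather than its continuity. Accordingly there is no genuine obstacle here; the only thing to be careful about is to keep the two issues separate and to cite the Ryll-Nardzewski characterization (as in \cite{ho93}) for the finiteness of the type spaces.
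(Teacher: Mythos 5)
Your proposal is correct and matches the paper's own (implicit) reasoning: the text preceding Definition \ref{rqe} already notes that for countable $\aleph_0$-categorical $\A, \B$ the type spaces $\sna$, $\snb$ are finite, so any well-defined map between them is continuous, with well-definedness of $\theta_g$ supplied by $\rqe$ and the qftp-respecting property exactly as in the Remark following Definition \ref{inducedmap}. Your explicit appeal to Ryll-Nardzewski and to the fact that a finite Hausdorff space is discrete just spells out what the paper leaves as an observation.
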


\begin{prop} Let $\A, \B$ be countable $\aleph_0$-categorical and $\rqe$.
If $\A$ is a semi-retract of $\B$ via $(g,f)$, then the pair of maps $(\theta_g,\theta_f)$ is a retraction of $\snb$ onto $\sna$ in the category $\CC$ of type spaces with continuous maps as morphisms.
\end{prop}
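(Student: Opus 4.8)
The plan is to check the two conditions in the definition of a retraction in $\CC$: that $\theta_g$ and $\theta_f$ are morphisms of $\CC$, and that $\theta_f\circ\theta_g=\id$ on $\sna$ (the analogue of the condition $fg=\id_Y$ with $Y=\sna$ and $X=\snb$). The first condition is immediate from what has already been set up. Since $\A$ is a semi-retract of $\B$ via $(g,f)$, both $g$ and $f$ are qftp-respecting injections by Definition \ref{def_semir}, and by hypothesis $\A,\B$ are countable, $\aleph_0$-categorical, and have $\rqe$; so Observation \ref{cont} applies to $g$ and to $f$, giving that $\theta_g:\sna\raw\snb$ and $\theta_f:\snb\raw\sna$ are continuous for every $n\geq 1$. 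A composite of continuous maps is continuous, so $\theta_f\circ\theta_g$ is again a morphism of $\CC$, of the correct type $\sna\raw\sna$.

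For the second condition, I would fix $n\geq 1$ and $p\in\sna$, and use $\rqe$ to represent $p$ concretely: there is $\ii\in{|\A|}^n$ with $p=\qftp^\A(\ii)$. By Definition \ref{inducedmap}, $\theta_g(p)=\qftp^\B(g(\ii))$, and since $g(\ii)$ realizes $\theta_g(p)$ in $\B$, applying Definition \ref{inducedmap} once more gives $\theta_f(\theta_g(p))=\qftp^\A\big(f(g(\ii))\big)=\qftp^\A(fg(\ii))$. Now the composition property of semi-retractions says precisely that $fg:\A\raw\A$ is qftp-preserving, i.e. $\qftp^\A(fg(\ii))=\qftp^\A(\ii)=p$. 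Hence $\theta_f(\theta_g(p))=p$ for all $p\in\sna$, so $\theta_f\circ\theta_g=\id_{\sna}$; and this holds uniformly in $n$, hence also levelwise on the product $\prod_{n<\omega}\sna$. Thus we obtain $\sna\xrightarrow{\theta_g}\snb\xrightarrow{\theta_f}\sna$ with $\theta_f\circ\theta_g=\id$, which is exactly what it means for $(\theta_g,\theta_f)$ to be a retraction of $\snb$ onto $\sna$ in $\CC$.

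I do not expect a genuine obstacle here: the technical content — well-definedness of $\theta_g$ and $\theta_f$ and their continuity — has already been dispatched in Definition \ref{inducedmap}, the remark following it, and Observation \ref{cont}, and the $\rqe$ hypothesis is exactly the device that lets us pass between an abstract complete type $p\in\sna$ and the quantifier-free type of an honest tuple $\ii$, where the purely algebraic composition property of $(g,f)$ can be invoked. The only point that requires care is matching the labelling to the categorical definition of a retraction: the retract is $\sna$, realized via $\theta_g$ into $\snb$ and $\theta_f$ back, so the roles of ``$f$'' and ``$g$'' in the pattern $Y\xrightarrow{g}X\xrightarrow{f}Y$ are played by $\theta_f$ and $\theta_g$, respectively.
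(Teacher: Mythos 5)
Your proposal is correct and follows essentially the same route as the paper's proof: continuity of $\theta_g,\theta_f$ via Observation \ref{cont}, and the identity $\theta_f\circ\theta_g=\id_{\sna}$ deduced from the composition property of the semi-retraction. The only difference is that you spell out the computation $\theta_f(\theta_g(p))=\qftp^\A(fg(\ii))=p$ using $\rqe$ to pick a realizing tuple, a step the paper leaves implicit.
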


\begin{proof} By Observation \ref{cont}, we know that $(\theta_g, \theta_f)$ are maps in the category $\CC$.

By assumption, there exist qftp-respecting injections on the underlying sets
$A \xrightarrow{g} \B \xrightarrow{f} \A$
such that the composition is qftp-preserving
$\A \xrightarrow{fg} \A.$

Thus,
$$\sna \xrightarrow{\theta_g} \snb \xrightarrow{\theta_f} \sna$$
and
$$\sna \xrightarrow{\theta_f \cdot \theta_g = \id_{\sna}} \sna$$
which shows that the pair of maps is indeed a retraction in this category.
\end{proof}

\section{Appendix}

We start by providing proofs of some of the basic results quoted in the Preliminaries.

\begin{Cpct} Fix a signature $L$ and a locally finite $L$-structure $M$.  Let $\K:=\age(\M)$.  Then, for any finite substructure $A \subseteq \M$,
\begin{enumerate}
    \item\label{i} $d_e(A,\K) = d_e(A,\M)$, and
    \item\label{ii} $d(A,\K) = d(A,\M)$. 
\end{enumerate}
\end{Cpct}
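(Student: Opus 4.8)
The plan is to prove both equalities by establishing the two inequalities separately, with one argument handling the substructure case and an essentially identical one the embedding case. The easy direction is $d(A,\M)\le d(A,\K)$ (and $d_e(A,\M)\le d_e(A,\K)$), a pure ``lifting'' observation: if $d(A,\K)=d<\infty$, then for each finite $B\subseteq\M$ and $r\ge 2$ there is $C\in\K$ with $C\longrightarrow(B)^A_{r,d}$; since $C\in\age(\M)$ there is an embedding $\iota\colon C\hookrightarrow\M$, and any $r$-coloring $c$ of $\binom{\M}{A}$ pulls back along $\iota$ to a coloring $c'$ of $\binom{C}{A}$ by $c'(A')=c(\iota(A'))$ (in the embedding case, $c'$ on $\emb(A,C)$ by $c'(e)=c(\iota\circ e)$). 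A $\le d$-chromatic copy of $B$ in $C$ for $c'$ then pushes forward under $\iota$ to a $\le d$-chromatic copy of $B$ in $\M$ for $c$, because $\iota$ restricted to that copy is an isomorphism onto its image and so induces a bijection on the sub-copies of $A$ (respectively on $h\circ\emb(A,B)$). Hence $\M\longrightarrow(B)^A_{r,d}$ for all such $B,r$, giving $d(A,\M)\le d$; if $d(A,\K)=\infty$ this is vacuous. In particular, if $d(A,\M)=\infty$ then $d(A,\K)=\infty$, so it remains to treat $d(A,\M)=d<\infty$ and show $d(A,\K)\le d$.

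For that I would argue by contraposition via compactness. Suppose $d(A,\K)>d$: then there are $B\in\K$ — fix a copy $B\subseteq\M$ — and $r\ge 2$ with $C\not\longrightarrow(B)^A_{r,d}$ for every $C\in\K$; since the arrow depends only on isomorphism type and every finite substructure of the locally finite $\M$ belongs to $\K$, this means $N\not\longrightarrow(B)^A_{r,d}$ for every finite substructure $N\subseteq\M$. I then build a bad $r$-coloring of $\binom{\M}{A}$ by propositional compactness. Take variables $x_{A',i}$ for $A'\in\binom{\M}{A}$, $i<r$, and the clauses: ``each $A'$ receives exactly one color'' ($\bigvee_{i<r}x_{A',i}$ together with $\neg x_{A',i}\vee\neg x_{A',j}$ for $i\ne j$); and, for each copy $B'\subseteq\M$ of $B$ and each $d$-element subset $T\subseteq r$, the clause $\bigvee\{x_{A',i}\colon A'\in\binom{B'}{A},\ i\notin T\}$, which (given the first family) says exactly that the colors appearing on $\binom{B'}{A}$ do not all lie in $T$, i.e. that $B'$ is not $\le d$-chromatic. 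A finite subset of this theory mentions only finitely many copies of $A$ and finitely many copies of $B$ (and for each mentioned $B'$ its whole finite set $\binom{B'}{A}$), all contained in the finite substructure $N\subseteq\M$ generated by their union — here local finiteness is used. A coloring of $\binom{N}{A}$ witnessing $N\not\longrightarrow(B)^A_{r,d}$, extended by giving every remaining copy of $A$ the color $0$, satisfies that finite subset: any copy of $B$ with universe inside $N$ is a copy of $B$ in $N$, hence $\ge(d+1)$-chromatic for that coloring, and all of its sub-copies of $A$ also lie in $N$. By compactness the whole theory is satisfiable, and a satisfying assignment is an $r$-coloring of $\binom{\M}{A}$ with no $\le d$-chromatic copy of $B$, contradicting $\M\longrightarrow(B)^A_{r,d}$. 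The embedding version is identical with $\emb(A,\M),\emb(A,N)$ replacing $\binom{\M}{A},\binom{N}{A}$, variables $x_{e,i}$ for $e\in\emb(A,\M)$, and clauses $\bigvee\{x_{h\circ f,i}\colon f\in\emb(A,B),\ i\notin T\}$ for $h\in\emb(B,\M)$.

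I expect the only real care to be in the compactness direction: phrasing the clauses so they genuinely negate ``$\le d$-chromatic'' (a separate clause for each $d$-subset $T$ of the color set, translating $\bigwedge_{i\in T}\neg x_{A',i}$ into $\bigvee_{i\notin T}x_{A',i}$ using the ``exactly one color'' axioms), and verifying on passing to the finite substructure $N$ that a copy of $B$ with universe inside $N$ is genuinely a copy of $B$ in $N$ with its whole $\binom{B'}{A}$ inside $N$ — which is exactly the point where local finiteness of $\M$ enters. The two lifting arguments and the $\infty$ bookkeeping are routine. One could instead run the compactness step through first-order compactness, or through K\"onig's lemma or an ultrafilter on the finite substructures of $\M$ in the countable case, but propositional compactness over the fixed index set $\binom{\M}{A}$ (resp. $\emb(A,\M)$) keeps the setup minimal and is the ``model-theoretic argument by compactness'' alluded to above.
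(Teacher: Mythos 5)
Your proof is correct and follows essentially the same route as the paper's: the direction $d(A,\M)\le d(A,\K)$ is the immediate lifting argument, and the hard direction is a compactness transfer of bad colorings from finite substructures of $\M$ up to $\M$ itself, with local finiteness entering exactly where you say it does (finiteness of the generated substructure $N$ and of each clause ${B' \choose A}$). The only difference is packaging: the paper runs the compactness step through first-order logic (the atomic diagram of $\M$ expanded by predicates $p_C$ recognizing copies and $R_i$ naming colors, together with sentences forcing every copy of $B$ to meet at least $d$ colors), whereas you use propositional compactness on variables indexed directly by ${\M \choose A}$ (resp.\ $\emb(A,\M)$), which also lets you sidestep the symmetrization clause $R_i(\ov{x})\leftrightarrow R_i(\sigma(\ov{x}))$ that the paper needs in the substructure case.
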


\begin{proof} Fix $L$, $\M$, $\K$ and $A$ as in the assumptions.  Let $n := ||A||$.  First we prove \eqref{ii} and then we indicate how \eqref{i} follows by a similar argument.  
%Within this argument only, given finite tuples $\ov{x}, \ov{y}$ where we define $m:=|\ov{x}|$ and $k:=|\ov{y}|$, define $\ov{x} \subseteq \ov{y}$ to mean that for some $i_0<\ldots<i_{m-1} < k$, $x_j = y_{i_j}$, for all $j<m$.  Define $\ov{x} \subseteq^e \ov{y}$ to mean that for some permutation $\sigma: m \to m$, $x_{\sigma(j)} = y_{i_j}$, for all $j<m$.
%10.12

Since $\M$ is assumed to be locally finite with age $\K$, we may expand $L$ to a signature $L'$ that contains new predicates $p_C(\ov{x})$, for all $C \in \K$, and expand $\M$ to an $L'$-structure $\M'$ such that $\M' \vDash p_C(\ov{c}')$ if and only if $\ov{c}'$ is an enumeration of some structure $C'$ such that $C' \cong C$.  

Let $\Diag(\M')$ be the atomic diagram of $\M'$ in the signature $L'$ in variables $\{x_c : c \in \M\}$.  In other words, for any atomic $L'$-formula $\varphi(x_0,\ldots,x_{n-1})$, $\varphi(x_{c_0},\ldots,x_{c_{n-1}}) \in \Diag(\M')$ if and only if $\M' \vDash \varphi(c_0,\ldots,c_{n-1})$. 

We may further expand $L'$ to a signature $L^*$ that contains new $n$-ary predicate symbols $\{R_i(\ov{x}) : i < \omega\}$.  Define $\theta_{A,r}$ to be the $L^*$-sentence stating that the interpretations of $\{R_i\}_{i<r}$ form a partition on the copies of $A$ in $\M$:

$$\theta_{A,r}:= \forall \ov{x} \left( \left( p_A(\ov{x}) \leftrightarrow \bigvee_{i<r} R_i(\ov{x})\right) \wedge \bigwedge_{i \neq j < r} \neg (R_i(\ov{x}) \wedge R_j(\ov{x})) \wedge \bigwedge_{i < r, \sigma \in \textrm{Sym}(n)} (R_i(\ov{x}) \leftrightarrow R_i(\sigma(\ov{x})))\right).$$
%10.10

Define $\psi_{B,A,r,d}$ to be the $L^*$-sentence stating (in conjunction with $\theta_{A,r}$) that the $r$-coloring of copies of $A$ given by the interpretations of the $\{R_i\}_{i<r}$ achieves at least $d$ colors on copies of $A$ within any copy of $B$:

$$\psi_{B,A,r,d}:=\forall \ov{y} \left(p_B(\ov{y}) \rightarrow 
\bigvee_{\text{$\ov{x}_0,\ldots,\ov{x}_{d-1} \subseteq \ov{y}$;} \atop \text{$|\ov{x}_i|=n, i<d$}} 
\left[ \bigwedge_{i<d} p_A(\ov{x}_i)  \wedge \bigvee_{i_0<i_1<\ldots<i_{d-1}<r} \left( \bigwedge_{j<d} R_{i_j}(\ov{x}_j) \right) \right] \right).$$

Suppose $d(A,\K) \geq d$, for some integer $d \geq 1$.
By definition, there exists an integer $r \geq 2$ and a structure $B \in \K$ such that for all $C \in \K$, there is a coloring $c: {C \choose A} \rightarrow r$ such that for all $B' \subseteq C$ with the property that $B' \cong B$, $|c({B' \choose A})| \geq d$.  Thus, the following $L^*$-type $\Gamma$ is finitely satisfiable (in expansions of $\M'$):
$$\Gamma(\{x_c : c \in |\M|\}):= \Diag(\M') \cup \{\theta_{A,r} \wedge \psi_{B,A,r,d} \}.$$ 
%this was the typo
A realization of $\Gamma$ induces a coloring $c_0: {\M \choose A} \rightarrow r$ such that for any $B' \subseteq M$ with the property that $B' \cong B$, 
$|c_0({B' \choose A})| \geq d$.  Thus, we have shown that $d(A,\M) \geq d$.
%it can't be less than d

If $A$ does not have finite Ramsey degree in $\K$, then for all integers $d \geq 1$, $d(A,\K) \geq d$.  By the previous argument, $d(A,\M) \geq d$, for all integers $d \geq 1$, and so $A$ does not have finite small Ramsey degree in $\M$.

If $A$ does have finite Ramsey degree in $\K$, then it follows immediately that $A$ has finite small Ramsey degree in $\M$, and $d(A,\M) \leq d(A,\K)$.  By the argument above, we also have that $d(A,\M) \geq d(A,\K)$, and so $d(A,\M) = d(A,\K)$.

\vspace{.1in}

To see \eqref{i}, for each finite substructure $C \subseteq \M$ we fix an enumeration $\ov{c}$ of $C$ that we call its ``natural'' enumeration.
We then expand $L$ to a signature $L'$ that contains new predicates $p^e_C(\ov{x})$, for all $C \in \K$, and expand $\M$ to an $L'$-structure $\M'$ such that $\M' \vDash p^e_C(\ov{c}')$ if and only if $\ov{c}' \sim \ov{c}$, where $\ov{c}$ is the natural enumeration of $C$.

Then we expand by predicates $\{R_i(\ov{x}) : i < \omega\}$ and define

$$\theta^e_{A,r}:= \forall \ov{x} \left( p^e_A(\ov{x}) \leftrightarrow \bigvee_{i<r} R_i(\ov{x}) \wedge \bigwedge_{i \neq j < r} \neg (R_i(\ov{x}) \wedge R_j(\ov{x})) \right)$$

$$\psi^e_{B,A,r,d}:=\forall \ov{y} (p^e_B(\ov{y}) \rightarrow 
\bigvee_{\text{$\ov{x}_0,\ldots,\ov{x}_{d-1} \subseteq \ov{y}$;} \atop \text{$|\ov{x}_i|=n, i<d$}} 
\left[ \bigwedge_{i<d} p^e_A(\ov{x}_i)  \wedge \bigvee_{i_0<i_1<\ldots<i_{d-1}<r} \left( \bigwedge_{j<d} R_{i_j}(\ov{x}_j) \right) \right] ).$$

After this, the argument proceeds in the same manner.
\end{proof}

\begin{Duo} Fix a signature $L$ and a locally finite $L$-structure $M$.  Let $\K:=\age(\M)$.  Then, for any finite substructures $A, B \subseteq \M$,
$(A,B)$ is a Ramsey duo for $\K$ if and only if $(A,B)$ is a Ramsey duo for $\M$.
\end{Duo}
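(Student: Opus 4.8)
The plan is to prove the two implications separately. The direction from $\K$ to $\M$ is immediate and needs no compactness; the direction from $\M$ to $\K$ follows from essentially the same compactness argument already used for Lemma~\ref{compactness_emb_degrees}.

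For the easy direction, suppose $(A,B)$ is a Ramsey duo for $\K$ and fix $r \geq 2$. Then there is $C \in \K$ with $C \longrightarrow (B)^A_r$, and since $C \in \age(\M)$ we may take $C$ to be an actual (finite) substructure of $\M$. Given any coloring $c : {\M \choose A} \to r$, its restriction to ${C \choose A}$ admits a copy $B' \subseteq C$ of $B$ on which $c$ is constant; since $B'$ is also a copy of $B$ in $\M$, this shows $\M \longrightarrow (B)^A_r$. Hence $(A,B)$ is a Ramsey duo for $\M$.

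For the converse I would argue the contrapositive. Assume $(A,B)$ is not a Ramsey duo for $\K$, so there is a fixed $r \geq 2$ such that for every $C \in \K$ there is a coloring $c_C : {C \choose A} \to r$ with $|c_C({B' \choose A})| \geq 2$ for every copy $B' \subseteq C$ of $B$. I would then reuse the expanded signatures $L \subseteq L' \subseteq L^*$, the structure $\M'$, and the sentences $\theta_{A,r}$ and $\psi_{B,A,r,2}$ from the proof of Lemma~\ref{compactness_emb_degrees} (this is just the case $d = 2$ of that construction, with $B$ given rather than existentially chosen), and consider the $L^*$-type
$$\Gamma(\{x_c : c \in |\M|\}) := \Diag(\M') \cup \{\theta_{A,r} \wedge \psi_{B,A,r,2}\}.$$
To see that $\Gamma$ is finitely satisfiable, let $\Gamma_0 \subseteq \Gamma$ be finite, let $S \subseteq |\M|$ be the finite set of $c$ for which $x_c$ occurs in $\Gamma_0$, let $C := \la S \ra_\M \in \K$ (finite by local finiteness), and let $C'$ denote the induced $L'$-substructure of $\M'$ on the same universe. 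Interpreting each $R_i$ on $C'$ so that $R_i(\ov{a})$ holds exactly when $\ov{a}$ enumerates a copy of $A$ in $C$ with $c_C$-color $i$, one checks that $C'$ models $\Gamma_0$ under the assignment $x_c \mapsto c$: the atomic-diagram formulas hold because $C'$ is a substructure of $\M'$; $\theta_{A,r}$ holds because $\{R_i\}_{i<r}$ partitions the copies of $A$ in $C$ invariantly under permutation of coordinates; and $\psi_{B,A,r,2}$ holds because no copy of $B$ in $C$ is $c_C$-monochromatic. By compactness $\Gamma$ has a model $\N$; in $\N$ the elements $\{x_c^\N : c \in |\M|\}$ form a substructure isomorphic to $\M$, and pulling back the $R_i^\N$ along this isomorphism yields a coloring ${\M \choose A} \to r$ under which every copy of $B$ in $\M$ gets at least two colors. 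Hence $\M \not\longrightarrow (B)^A_r$, so $(A,B)$ is not a Ramsey duo for $\M$.

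The one point that needs care is the absoluteness bookkeeping in the last step: one must check that each predicate $p_D$, $D \in \K$, is interpreted as intended (namely $p_D(\ov{a})$ iff $\ov{a}$ enumerates a copy of $D$) not only in $\M'$ but also in each finite substructure $C'$ and in the copy of $\M$ sitting inside a model of $\Gamma$, so that the notions of a copy of $A$ and a copy of $B$ are absolute. This is exactly where local finiteness enters, since it makes $\la S \ra_\M$ finite and hence lets the atomic diagram pin down each relevant $p_D$. All remaining details are already present in the proof of Lemma~\ref{compactness_emb_degrees}. Alternatively, the converse could be deduced from the ultrafilter argument in Proposition~3 of \cite{nvt13}, but mirroring Lemma~\ref{compactness_emb_degrees} keeps the appendix self-contained.
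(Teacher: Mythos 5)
Your proof is correct and follows essentially the same route as the paper's: the direction from $\K$ to $\M$ is immediate, and the converse is the contrapositive compactness argument reusing the signature expansion and the sentences $\theta_{A,r}$, $\psi_{B,A,r,2}$ from Lemma~\ref{compactness_emb_degrees}, with finite satisfiability witnessed by the bad colorings $c_C$. Your write-up simply spells out the finite-satisfiability check in more detail than the paper does.
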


\begin{proof}
    If $(A,B)$ is a Ramsey duo for $\K$, it follows immediately that $(A,B)$ is a Ramsey duo for $\M$.  Suppose $(A,B)$ is not a Ramsey duo for $\K$.  Then there exists an integer $r \geq 2$ such that for all $C \in \K$, there exists a bad $r$-coloring $f_C: {C \choose A} \rightarrow r$.  As in Proposition \ref{compactness_emb_degrees}, we can write the type of a structure $\M'$ isomorphic to $\M$ with a bad $r$-coloring, i.e. such that for all $B' \cong B$ in $\M'$, $|c({B' \choose A})| \geq 2$.  This type is finitely satisfiable using the bad colorings $f_C$ and the fact that $\K = \age(\M)$.  Now the $L$-reduct of the realization of this type is isomorphic to $\M$, and the isomorphism induces a bad coloring on $\M$.
\end{proof}

%%%%%%%%%%%%%%%%%%%%%
%MT argument
%%%%%%%%%%%%%%%%%%%%%%

\subsection{An RP transfer argument from the perspective of substructures}\label{ModelTheoryFunctionalSection}

In this section, we develop an argument for Corollary \ref{cor_fmla_free} built on the ``restricted inverse images under $f$" property as defined in Definition \ref{restricted}.

%if \A is rigid and \b is ultrahomogeneous and g(B) < B_0, then we could probably make the argument work.

%{\color{red}Question: $f$ needs to be qftp-respecting on how big a subset $Y \subseteq \B$?  Could it just be qftp-respecting on $g(\A) \subseteq \B$?  Check.}

\begin{FmlaFree} Let $\A, \B$ be structures  and suppose $(g,f)$ is a semi-retraction between $\A$ and $\B$.   
Suppose that $\ov{a}, \ov{b}$ are finite tuples from $\A$ that generate finite substructures $A, B$, respectively, of $\A$, such that $g(\ov{a}), g(\ov{b})$ generate rigid substructures $A_0, B_0$, respectively, of $\B$.  If $(A_0,B_0)$ is a Ramsey duo for $\B$, then $(A,B)$ is a Ramsey duo for $\A$.  
%$\age(\B)$ consists of rigid elements and $\A$ is locally finite.  If $\B$ has RP, then $\A$ has RP.
\end{FmlaFree}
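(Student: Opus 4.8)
The plan is to fix an arbitrary integer $r \geq 2$ and an arbitrary $r$-coloring $c : {\A \choose A} \raw r$, push $c$ forward along $f$ to a coloring of the copies of $A_0$ in $\B$, apply the Ramsey duo $(A_0,B_0)$ for $\B$ to that coloring, and then pull the resulting homogeneous copy of $B_0$ back along $f$ to a homogeneous copy of $B$ in $\A$. It is cleanest to assume $\ov{a}$ and $\ov{b}$ enumerate $A$ and $B$ (matching the setup of Proposition \ref{prop_fcn_restricted}), and I abbreviate $\ov{a}_0 := g(\ov{a})$, $\ov{b}_0 := g(\ov{b})$, $\ov{a}_1 := fg(\ov{a})$, $\ov{b}_1 := fg(\ov{b})$, so that $A_0 = \la \ov{a}_0 \ra_\B$ and $B_0 = \la \ov{b}_0 \ra_\B$ are the given rigid structures and --- since $fg$ is an $\LA$-embedding by the composition property of semi-retractions --- $A_1 := \la \ov{a}_1 \ra_\A \cong_{\LA} A$ and $B_1 := \la \ov{b}_1 \ra_\A \cong_{\LA} B$.

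First I would use rigidity of $A_0$ to attach to each copy $A_0' \in {\B \choose A_0}$ a canonical enumeration: by Remark \ref{unique} there is a unique isomorphism $\tau : A_0 \raw A_0'$, and I set $\ov{a}_0' := \tau(\ov{a}_0)$, the unique generating tuple of $A_0'$ with $\ov{a}_0' \sim_\B \ov{a}_0$. The qftp-respecting property of $f$ gives $f(\ov{a}_0') \sim_\A f(\ov{a}_0) = \ov{a}_1$, so $\la f(\ov{a}_0') \ra_\A$ is a copy of $A$ in $\A$, and I may define an induced coloring $c_0 : {\B \choose A_0} \raw r$ by $c_0(A_0') := c(\la f(\ov{a}_0') \ra_\A)$; rigidity of $A_0$ is precisely what makes $c_0$ well defined. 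Since $(A_0,B_0)$ is a Ramsey duo for $\B$, $\B \longrightarrow (B_0)^{A_0}_r$, so I obtain a copy $B_0' \subseteq \B$ of $B_0$ on which $c_0$ is constant, say with value $v$.

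Next I would pull $B_0'$ back. Let $\ov{b}_0'$ be the canonical enumeration of $B_0'$ (using rigidity of $B_0$), so $\ov{b}_0' \sim_\B \ov{b}_0 = g(\ov{b})$; then $B_1' := \la f(\ov{b}_0') \ra_\A$ is a copy of $B$ in $\A$, again by the qftp-respecting property of $f$. It remains to show $c$ takes only the value $v$ on ${B_1' \choose A}$. Fix a copy $A' \subseteq B_1'$ of $A$, choose an isomorphism $\rho : A \raw A'$, and put $\ov{c}_1 := \rho(\ov{a})$, a tuple in $B_1' = \la f(\ov{b}_0') \ra_\A$ with $\ov{c}_1 \sim_\A \ov{a}$ and $\la \ov{c}_1 \ra_\A = A'$. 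Since $\ov{b}_0' \sim_\B g(\ov{b})$, Proposition \ref{prop_fcn_restricted} applies and yields $\ov{c}_0 := f^{-1}(\ov{c}_1) \subseteq \ov{b}_0'$ with $\ov{c}_0 \sim_\B \ov{a}_0$. Hence $A_0' := \la \ov{c}_0 \ra_\B$ is a substructure of $B_0' = \la \ov{b}_0' \ra_\B$ isomorphic to $A_0$, so $A_0' \in {B_0' \choose A_0}$; and since $\ov{c}_0$ generates $A_0'$ and $\ov{c}_0 \sim_\B \ov{a}_0$, rigidity of $A_0$ forces $\ov{c}_0$ to be exactly the canonical enumeration of $A_0'$. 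Therefore $c_0(A_0') = c(\la f(\ov{c}_0) \ra_\A) = c(\la \ov{c}_1 \ra_\A) = c(A')$, and as $A_0' \subseteq B_0'$ this gives $c(A') = v$, completing the argument.

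The engine of the proof is short; the step I expect to take real care is the enumeration bookkeeping. One must consistently identify each copy of $A_0$ and of $B_0$ with its rigidity-canonical enumeration, check that $f$ applied to such an enumeration generates a bona fide copy of $A$ or $B$ in $\A$ --- recall $f$ need not be an embedding, so one works with $\la f(-) \ra_\A$ rather than with $f(-)$ itself --- and, crucially, verify that an arbitrary copy of $A$ inside the pulled-back $B_1'$ really does arise as $\la f(-) \ra_\A$ of a copy of $A_0$ inside $B_0'$. This last point is exactly the ``restricted inverse images under $f$'' property isolated in Definition \ref{restricted} and established in Proposition \ref{prop_fcn_restricted}, and it is where rigidity of $A_0$ is genuinely needed: as the discussion around Remark \ref{rmkba} shows, without it two distinct isomorphic copies of $A$ inside $B_1'$ could fold onto the same copy of $A_0$ with mismatched colors, and the transfer breaks. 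A small technical caveat is that Proposition \ref{prop_fcn_restricted} is stated for tuples enumerating substructures, so one should either arrange $\ov{a}$ and $\ov{b}$ to be such enumerations at the outset or run the whole argument through generated substructures $\la f(-) \ra_\A$ directly.
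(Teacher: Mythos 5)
Your proposal is correct and follows essentially the same route as the paper's own argument in the Appendix: push the coloring forward along $f$ using the rigidity-canonical enumerations of copies of $A_0$, apply the Ramsey duo hypothesis in $\B$, pull the homogeneous copy of $B_0$ back through $\la f(-)\ra_\A$, and invoke Proposition \ref{prop_fcn_restricted} to see that every copy of $A$ in the pulled-back copy of $B$ comes from a copy of $A_0$ with the canonical enumeration. The only cosmetic differences are that you handle general $r$-colorings where the paper writes the case $r=2$, and you are slightly more explicit about why $\ov{c}_0$ must coincide with the canonical enumeration of $\la \ov{c}_0\ra_\B$.
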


\begin{proof} Fix structures $\A, \B$, maps $g, f$, finite tuples $\ov{a}, \ov{b}$, and structures $A, B, A_0, B_0$, as in the statement.
Assume $(A_0,B_0)$ is a Ramsey duo for $\B$.

Fix finitely generated structures $A, B \subseteq \A$ and fix generators, $\ov{a}$ and $\ov{b}$ for these structures, respectively, in some fixed enumeration.  Since $\A$ is assumed locally finite, we may assume that $\ov{a}, \ov{b}$ enumerate $A, B$, respectively.

Fix a coloring $c: {\A \choose A} \raw 2$. 
We define the tuples $\ov{a}_0:=g(\ov{a}), \ov{b}_0:=g(\ov{b}), \ov{a}_1:=f(\ov{a}_0), \ov{b}_1:=f(\ov{b}_0)$.  
Moreover, let 
%$A:=\la \ov{a} \ra_\A, B:=\la \ov{b} \ra_\A$, 
$A_0:=\la \ov{a}_0 \ra_{\B}$, $B_0 = \la \ov{b}_0 \ra_\B$.  Since $fg$ is an embedding and $\ov{a}$ and $\ov{b}$ were assumed to be structures, $\ov{a}_1$ and $\ov{b}_1$ must also enumerate $\LA$-structures, which we denote by $A_1$ and $B_1$, respectively.  Since $g$ is not an embedding, we must consider that the structures generated by $\ov{a}_0, \ov{b}_0$ might be strictly larger than the generators:
$A_0:=\la \ov{a}_0 \ra_{\B}$, $B_0 = \la \ov{b}_0 \ra_\B$.
%We summarize our notation in a diagram:
%\begin{center}
%\begin{tikzcd}
%\ov{a}  \arrow[r,mapsto,"g"] &\ov{a}_0 \arrow[r,mapsto,"f"] & \ov{a}_1\\
%\ov{b} \arrow[r,mapsto,"g"] &\ov{b}_0 \arrow[r,mapsto,"f"] & \ov{b}_1
%\end{tikzcd}
%\end{center}
If there exists an $\LB$-isomorphism $\rho: A_0 \raw A_0'$, for some substructure $A_0' \subseteq \B$, let $\ov{a}_0' := \rho(\ov{a}_0)$ and define an induced coloring $c_0 : {\B \choose A_0} \raw 2$ by
$c_0(A_0') := c(\la f(\ov{a}_0') \ra_\A)$.
Since $\rho$ is an $\LB$-isomorphism, we know that $\ov{a}_0' \sim_\B \ov{a}_0$, and since $f$ is qftp-respecting, we know that 
$f(\ov{a}_0') \sim_\A f(\ov{a}_0)$ which implies that
$\la f(\ov{a}_0') \ra_\A \cong \la f(\ov{a}_0) \ra_\A \cong \A$.
This, and the fact that the isomorphism $\rho$, if it exists, is unique, guarantee that this coloring $c_0$ is well-defined and total on the domain ${\B \choose A_0}$.

By the assumption of that $(A_0, B_0)$ is a Ramsey duo, there is a copy $B_0'$ of $B_0$ in $\B$ homogeneous for the coloring $c_0$ on copies of $A_0$.  Thus, there is $d<2$ such that $c_0(A_0')=d$ for all $A_0' \cong_{\LB} A_0$ in $B_0'$.
%I DEFINED d
%
Since $B_0 \cong_{\LB} B_0'$,
Remark \ref{unique} guarantees the existence of a unique isomorphism $\sigma : B_0 \raw B_0'$.  Define
$\ov{b}_0' := \sigma(\ov{b}_0)$ and $\ov{b}_1' :=f(\ov{b}_0')$
and observe that
$\ov{b}_0' \sim_\B \ov{b}_0$.
%
%b'' is b_1'
%b' is b_1
%
By the qftp-respecting property of semi-retractions, since $\ov{b}_0' \sim_\B \ov{b}_0$,
$\ov{b}_1' = f(\ov{b}_0')\sim_\A f(\ov{b}_0)=\ov{b}_1$.
By the composition property of semi-retractions,
$\ov{b}_1 \sim_\A \ov{b}$.
Thus, by transitivity of the relation $\sim_\A$,
$\ov{b}_1' \sim_\A \ov{b}$.

Since $\ov{b}_1$ enumerates an $\LA$-structure the $\sim_\A$ tuples $\ov{b}_1'$ enumerates an $\LA$-structure, which we may denote by $B_1'$.  Moreover, we have shown that $B_1' \cong B$.
We claim that $B_1'$ is the desired homogeneous copy of $B$ in $\A$ for the coloring $c$.  To verify, let $A_1'$ be a copy of $A$ in $B_1'$, and let $\xi: A \rightarrow A_1'$ be the unique function witnessing the isomorphism.  Define $\ov{a}_1': = \xi(\ov{a})$ and note that $\ov{a}_1' \sim_\A \ov{a}$.  %As with the case of $\ov{b}_1'$, $\ov{a}_1'$ enumerates an $\LA$-structure, so we conclude that $\ov{a}_1'$ enumerates $A_1'$.
%Since $\ov{b}_0 \sim_\B \ov{b}_0'$ by \eqref{2.75} and we have assumed that $\ov{a}, \ov{b}$ enumerate finite substructures of $\A$,
Define $\ov{a}_0':=f^{-1}(\ov{a}_1')$.
By Proposition \ref{prop_fcn_restricted}, $\ov{b}_0'$ has the restricted inverse images under $f$ property for $\ov{a}$ witnessed by $\ov{a}_0$.   So in particular,  $\ov{a}_0' \subseteq \ov{b}_0'$ and $\ov{a}_0' \sim_{\B} \ov{a}_0$.
By homogeneity of $B_0'$ for the coloring $c_0$, $c_0(\la \ov{a}_0' \ra_\B) = d$, and since $\ov{a}_0' \sim_\B \ov{a}_0$,
%by a close inspection of the definition of $c_0$ (using rigidity of structures in $\age(\B)$)
$c_0(\la \ov{a}_0' \ra_\B) := c(\la f(\ov{a}_0') \ra_\A)$
thus
$d=c(\la f(\ov{a}_0') \ra_\A)=c(\la \ov{a}_1' \ra_\A)=c(A_1')$, as desired.

%We illustrate the maps as follows:
%
%\begin{center}
%\begin{tikzcd}
%\ov{a} \subseteq \ov{b} \arrow[r,mapsto,"g"] &\ov{a}_0 \subseteq \ov{b}_0 \arrow[r,mapsto,"f"] \arrow[d,dash,dashed,"\eqref{11}"]& \ov{a}_1 \subseteq \ov{b}_1 \\
%{} &\ov{a}_0' \subseteq \ov{b}_0' \arrow[r,mapsto,"f"] & \ov{a}_1' \subseteq \ov{b}_1' \arrow[llu,dash,dashed,controls={+(-1,3) and +(-1,2)},"\eqref{10}"]
%\end{tikzcd}
%\end{center}

\end{proof}

\begin{rmk}
In the previous argument for Corollary \ref{cor_fmla_free}, we make essential use of the local finiteness of $\A$.  If $\A$ is not locally finite, then some finite tuple $\ov{b}$ from $\A$ generates an infinite substructure $B \subseteq \A$.  Then $g(B) \subseteq |\B|$ is an infinite set that may or may not be generated by a finite set of generators from $\B$, in which case it may not be contained in an element of $\age(\B)$.  This is a problem, because any copy of $A \subseteq B$ in $fg(B) \subseteq \A$ has a preimage that is somewhere in $g(B)$, and may be outside the image $g(\ov{b})$ of the original generators, $\ov{b}$.  In general, the Ramsey property for $\B$ does not constrain infinite sets such as $g(B)$ that are not contained in elements of $\age(\B)$.  For a more specific illustration of what could go wrong, see Example \ref{succ}.
\end{rmk}

\section*{Acknowledgements} The second author thanks John Mumma for a helpful discussion about Frank Ramsey's 1929 paper and Anand Pillay for a discussion about lifting automorphisms in the early stages of thinking about this paper.
The authors thank the anonymous referee for comments that led to improvement  of the presentation of the paper.
\bibliographystyle{plainnat}
\bibliography{SR_References}

\end{document}